\theoremstyle{plain}
\newtheorem{thm}{Theorem}[section]
\numberwithin{thm}{section}
\newtheorem{prob}[thm]{Problem}
\newtheorem{fact}[thm]{Fact}
\newtheorem{cor}[thm]{Corollary}
\newtheorem{lemma}[thm]{Lemma}
\newtheorem{prop}[thm]{Proposition}
\theoremstyle{definition}
\newtheorem{defn}[thm]{Definition}
\theoremstyle{remark}
\newtheorem{claim}[thm]{Claim}
\newcommand{\ms}{\mathsf{s}}
\newcommand{\G}{\mathsf{G}}
\newcommand{\D}{\mathrm{D}}
\newcommand{\sfin}{\mathsf{S}_\mathrm{fin}}
\newcommand{\gfin}{\mathsf{G}_\mathrm{fin}}
\newcommand{\sbound}{\mathsf{S}_\mathrm{bnd}}
\newcommand{\gbound}{\mathsf{G}_\mathrm{bnd}}
\newcommand{\fin}{\mathrm{fin}}
\DeclareMathOperator{\impls}{\implies}
\DeclareMathOperator{\dom}{dom}
\DeclareMathOperator{\modd}{mod}
\DeclareMathOperator{\wins}{\uparrow}
\DeclareMathOperator{\doesntwin}{\centernot{\uparrow}}
\DeclareMathOperator{\restrict}{\upharpoonright}
\newcommand{\mA}{{\mathcal{A}}}
\newcommand{\mB}{{\mathcal{B}}}
\newcommand{\mC}{{\mathcal{C}}}
\newcommand{\mF}{{\mathcal{F}}}
\newcommand{\mO}{{\mathcal{O}}}
\newcommand{\mU}{{\mathcal{U}}}
\newcommand{\mV}{{\mathcal{V}}}
\newcommand{\mW}{{\mathcal{W}}}
\newcommand{\NN}{\mathbb{N}}
\newcommand{\RR}{\mathbb{R}}
\newcommand{\w}{{\omega}}
\newcommand{\W}{{\Omega}}
\newcommand{\1}{\textsc{Alice}}
\newcommand{\A}{\textsc{A}}
\newcommand{\2}{\textsc{Bob}}
\newcommand{\B}{\textsc{B}}
\newcommand{\seq}[1]{{\langle {#1} \rangle}}
\newcommand{\set}[1]{\left\{\, {#1} \,\right\}}
\newcommand{\sel}[3]{\mathsf{S}_\mathrm{#1}(#3)\modd{#2}}
\newcommand{\sels}[3]{\mathsf{S}^\ms_\mathrm{#1}(#3)\modd{#2}}
\newcommand{\ssel}[2]{\mathsf{S}_\mathrm{#1}(#2)}
\newcommand{\game}[3]{\mathsf{G}_\mathrm{#1}(#3)\modd{#2}}
\newcommand{\sgame}[2]{\mathsf{G}_\mathrm{#1}(#2)}
\begin{document}
	
\title{Topological games of bounded selections}
\author[L. Aurichi]{Leandro F. Aurichi$^1$}
\address{Instituto de Ci\^encias Matem\'aticas e de Computa\c c\~ao, Universidade de S\~ao Paulo\\
	Avenida Trabalhador s\~ao-carlense, 400,  S\~ao Carlos, SP, 13566-590, Brazil}
\email{aurichi@icmc.usp.br}
\thanks{$^1$Supported by FAPESP (2017/09252-3)}
\author[M. Duzi]{Matheus Duzi$^2$}
\address{Instituto de Ci\^encias Matem\'aticas e de Computa\c c\~ao, Universidade de S\~ao Paulo\\
	Avenida Trabalhador s\~ao-carlense, 400,  S\~ao Carlos, SP, 13566-590, Brazil}
\email{matheus.duzi.costa@usp.br}
\thanks{$^2$Supported by FAPESP (2017/09797-0)}
\subjclass[2010]{Primary 91A44;
	Secondary 54D20, 54D99}
\begin{abstract}
	We present a new variation of the classical selection principles $\ssel{k}{\mA,\mB}$ ($k\in\NN$) and $\ssel{fin}{\mA,\mB}$ that formally lies between these two properties. As in the case of the classical selection principles, we also obtain a new variation of topological game and discuss how new topological properties may emerge in the specific cases of covering and tightness.
\end{abstract}
\maketitle

\section{Introduction}

Topological games have been studied for several years. Arguably the best known, and oldest, is the Banach-Mazur game (see e.g. \cite{Telgarsky1987}) but many others have been studied. Some classical properties defined by Menger, Rothberger and Hurewicz (see e.g. \cite{hurewicz1926}) are nowadays presented in a form of a game or selection principle - which has the advantage of showing precisely the combinatorics behind those properties. A selection principle (see below) usually is of the following form: a sequence of special sets is given, then one can pick elements of each set to form a new special set. Per example, if for every sequence of open coverings is possible to pick one open set from each covering to form a new open covering, we say that the space satisfies the Rothberger property. If it is allowed to pick not only one open set, but finitely many from each open covering, then we have the Menger property. The difference in the game versions of these selection principles is that one of the players gives each special set, one at a time - so the other player has to choose one (or finitely many) element of such sets without knowing which are the other special sets in order to form a new special set by the end of the game.

With the two examples above, it is easy to see that small changes in the statements can change drastically the final property. Per example, every compact space is (trivially) Menger, but not necessarily Rothberger. So it is natural to ask what other kind of change can be done. What happens when the selection is not only one element, but two? Or three? As it turns out, such changes might give rise to new properties. In \cite{Aurichi2018}, for instance, it was shown that when we consider the family of special sets as the one with every subset of a given space whose closure contains a fixed point of such space (the so called tightness case), then it actually makes a difference which amount of points we allow the second player to pick. On the other hand, in the covering case previously mentioned, it was shown in \cite{Crone2019} that, at least for Hausdorff spaces, it makes no difference which fixed amount of open sets we allow the second player to choose.

In this paper we give continuity in this study of the fundamental differences between the covering (as Rothberger and Menger) and the tightness (as in properties like countable fan tightness and countable strong fan tightness) cases. In order to do so we introduce a new kind of variation: what changes if each selection is finite but at the end, the size of all selections has to be bounded by a number? We will show that usually this bounded selections are different from the classical ones and that the behavior can also change depending upon the case (covering, tightness) studied, highlighting a few of what appears to be the reasons for this phenomenon. In the covering case, notably, we show some characterizations for the new game and selection principle variations analogous to classical ones and, as a corollary, we present a characterization for metrizable spaces in terms of two subspaces: a compact and a countable (or strong measure zero with respect to every metric that generates the space's topology). 

This paper is organized as follows. In Section \ref{SEC_Sel_Prin} we present the new variation of selection principle and discuss its first relations with some classical selection principles, showing that in the covering case we have a new intermediate property and that in the tightness case the new variation collapses to one of the classical variations.

In Section \ref{SEC_Assoc_Games} we present the games naturally associated to the new variation, showing that both in the covering and tightness cases we have new games. In particular, we characterize the new game in the tightness case in terms of the classical games.

We dedicate Section \ref{SEC_mod1_modfin} to present yet two other new variations of selection principles that enable us to characterize the covering case. 

In Section \ref{SEC_Paw_Hur} we show a result for the new variations in the covering case that is analogous to the Pawlikowski and Hurewicz theorems, obtaining yet another characterization of the new variation of selection principle. This result, however, could not be obtained as a corollary of the classical ones, so the proof is presented as an adaptation of the proof of Pawlikowski's Theorem, inspired by a simplified version seen in the notes \cite{Szewczak} provided by Szewczak and Tsaban.

We continue to study the covering case in Section \ref{SEC_dual}, where we present a duality analogous to the one given by Galvin in \cite{Galvin1978}.

Finally, Section \ref{SEC_CONC} is dedicated to present some known results and examples so we can summarize in two diagrams the contrast reflected by these bounded selections between the covering and tightness cases.

In what follows we denote $\w\setminus\{0\}$ by $\NN$. Also, given a topological space $X$ and $p\in X$, we write $\W_p=\set{Y\subset X: p\in\overline{Y}}$ and $\mO$ as the set of open covers of $X$. Given two open covers $\mU,\mW$ of a space, we denote their common refinement by $\mU\wedge\mW$, that is, 
\[\mU\wedge\mW=\set{U\cap W: U\in\mU, W\in\mW}.\]

The following trivial fact about topological spaces will also be useful for future arguments.

\begin{fact}\label{triv_fact0}
	Let $X$ be a topological space and $p\in X$. If $A$ is such that $p\in\overline{A}$ and $p\notin \overline{\{x\}}$ for every $x\in A$, then $p\in\overline{A\setminus F}$ for every $F\subset A$ finite.
\end{fact}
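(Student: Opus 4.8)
The plan is to unwind the definition of closure in terms of open neighborhoods and to exploit the hypothesis $p\notin\overline{\{x\}}$ in order to separate $p$ from each of the finitely many points of $F$ by a single open set. First, for each $x\in A$ the set $U_x=X\setminus\overline{\{x\}}$ is open, contains $p$ (precisely because $p\notin\overline{\{x\}}$), and does not contain $x$. Given a finite $F\subset A$, I would then put $U=\bigcap_{x\in F}U_x$; since $F$ is finite, $U$ is an open neighborhood of $p$, and by construction $U\cap F=\emptyset$.

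Next I would check that every open neighborhood of $p$ meets $A\setminus F$. Let $V$ be an arbitrary open set with $p\in V$. Then $V\cap U$ is again an open neighborhood of $p$, so the hypothesis $p\in\overline{A}$ yields a point $a\in A\cap V\cap U$. Because $a\in U$ and $U\cap F=\emptyset$, we have $a\notin F$, hence $a\in(A\setminus F)\cap V$. Since $V$ was an arbitrary neighborhood of $p$, this shows $p\in\overline{A\setminus F}$, which is exactly the claim.

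I do not expect any real obstacle here: the statement is a routine consequence of the observation that each of the finitely many ``bad'' points can be individually excluded by a neighborhood of $p$, together with the fact that a finite intersection of open sets is open. The only point worth emphasizing is that no separation axiom is needed — the complement $X\setminus\overline{\{x\}}$ already serves as the required neighborhood of $p$ missing $x$, using solely the hypothesis $p\notin\overline{\{x\}}$.
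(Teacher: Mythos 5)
Your proof is correct, and since the paper states this Fact without proof, your argument supplies exactly the standard reasoning the authors leave implicit: the open set $X\setminus\overline{\{x\}}$ is a neighborhood of $p$ missing $x$, a finite intersection of these excludes all of $F$, and intersecting with an arbitrary neighborhood of $p$ still meets $A$. No gaps, and you are right that no separation axiom is needed.
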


\section{Selection Principles}\label{SEC_Sel_Prin}
We formalize and fix the notation for the already discussed selection principle as it follows.
	
\begin{defn}\label{sk_defn}
	Let $\mA,\mB$ be families of sets and $k\in\NN$. We say that $\ssel{k}{\mA,\mB}$ holds if, for every sequence  $\seq{A_n:n\in\w}$ of elements of $\mA$ there is a sequence $\seq{B_n:n\in\w}$ such that
	\begin{enumerate}
		\item [a.] $B_n\subset A_n$ for every $n\in\w$;
		\item [b.] $|B_n|\le k$ for every $n\in\w$;
		\item [c.] $\bigcup_{n\in\w}B_n\in\mB$.
	\end{enumerate}
\end{defn}

We note that the classical case here is for $k=1$ in Definition \ref{sk_defn} with requirement $b$ changed to ``$|B_n|=1$ for every $n\in\w$''. For the two cases we are going to discuss in this paper, this small change makes no difference. It is worth mentioning that $\ssel{1}{\mO,\mO}$ is known as \emph{Rothberger property} and $\ssel{1}{\W_p, \W_p}$ is known as \emph{countable strong fan tightness}.

\begin{defn}\label{sfin_defn}
	Let $\mA,\mB$ be families of sets. We say that $\ssel{fin}{\mA,\mB}$ holds if, for every sequence  $\seq{A_n:n\in\w}$ of elements of $\mA$ there is a sequence $\seq{B_n:n\in\w}$ such that
	\begin{enumerate}
		\item [a.] $B_n\subset A_n$ is finite for every $n\in\w$;
		\item [b.] $\bigcup_{n\in\w}B_n\in\mB$.
	\end{enumerate}
\end{defn}

We note here that $\ssel{\fin}{\mO,\mO}$ is known as \emph{Menger property} and $\ssel{\fin}{\W_p, \W_p}$ is known as \emph{countable fan tightness}.

It is, then, based on Definitions \ref{sk_defn} and \ref{sfin_defn} that we define what may be a new variation: 
\begin{defn}\label{sbound_defn}
	Let $\mA,\mB$ be families of sets. We say that $\sbound(\mA,\mB)$ holds if, for every sequence  $\seq{A_n:n\in\w}$ of elements of $\mA$ there is a sequence $\seq{B_n:n\in\w}$ and $k\in\w$ with, for every $n\in\w$,
	\begin{enumerate}
		\item [a.] $B_n\subset A_n$ is finite;
		\item [b.] $\bigcup_{n\in\w}B_n\in\mB$;
		\item [c.] $|B_n|\le k$.
	\end{enumerate}
\end{defn}

It is easy to see that Definition \ref{sbound_defn} is different from both $\ssel{1}{\mA,\mB}$ and $\ssel{fin}{\mA,\mB}$:
\begin{prop}\label{covering_ex}
	$\sbound(\mO,\mO)$ holds over every compact space, but $\ssel{1}{\mO,\mO}$ does not hold over $2^\w$.\\
	Moreover, $\sfin(\mO,\mO)$ holds over every $\sigma$-compact space, but $\sbound(\mO,\mO)$ does not hold over $\RR$.
\end{prop}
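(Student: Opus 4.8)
The plan is to verify each of the four assertions in Proposition~\ref{covering_ex} separately, the first three being quick and the fourth requiring a genuine construction.

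First, \emph{$\sbound(\mO,\mO)$ holds over every compact space.} Given a sequence $\seq{\mU_n:n\in\w}$ of open covers, use compactness to extract from $\mU_0$ a finite subcover $\mathcal{B}_0=\{U_0^1,\dots,U_0^k\}$. Then set $B_0=\mathcal{B}_0$ and $B_n=\{V_n\}$ for any fixed $V_n\in\mU_n$ when $n\ge 1$. The selections satisfy $|B_n|\le k$ for all $n$ (this is exactly where boundedness by a \emph{single} $k$ is harmless: we only need one cover's worth of sets), and $\bigcup_n B_n\supseteq\mathcal{B}_0$ is already an open cover. For \emph{$\ssel{1}{\mO,\mO}$ does not hold over $2^\w$}: this is the classical fact that $2^\w$ (being an uncountable space with no isolated points, or more simply a non-Rothberger compact space) fails the Rothberger property; one can cite the standard argument that a space with the Rothberger property has strong measure zero with respect to every compatible metric, and $2^\w$ with its usual metric does not (a diagonalization producing a point avoided by a prescribed sequence of small balls). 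Next, \emph{$\sfin(\mO,\mO)$ holds over every $\sigma$-compact space}: write $X=\bigcup_m K_m$ with each $K_m$ compact; given open covers $\seq{\mU_n:n\in\w}$, partition $\w$ into infinitely many infinite pieces $\{I_m:m\in\w\}$, and on the $j$-th element $n$ of $I_m$ choose a finite $B_n\subseteq\mU_n$ covering $K_m$ (possible by compactness); then $\bigcup_n B_n$ covers $\bigcup_m K_m=X$. (This is just the standard proof that $\sigma$-compact implies Menger.)

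The substantive part is \emph{$\sbound(\mO,\mO)$ does not hold over $\RR$}. I would argue by contradiction: suppose a witnessing $k\in\w$ and selections exist for a cleverly chosen sequence of covers. The idea is to exploit that a single bound $k$ cannot cover arbitrarily ``thin'' covers that nonetheless force unboundedly many sets. Concretely, for each $n\in\w$ let $\mU_n$ be the cover of $\RR$ by all open intervals of length $2^{-n}$ (or, to make the counting clean, the cover $\mathcal{U}_n=\{(j\cdot 2^{-n-1},\,(j+2)\cdot 2^{-n-1}):j\in\mathbb Z\}$ by intervals of length $2^{-n}$ with centers on a $2^{-n-1}$-grid). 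A selection $B_n\subseteq\mathcal{U}_n$ with $|B_n|\le k$ has total Lebesgue measure at most $k\cdot 2^{-n}$, so $\bigcup_n B_n$ has measure at most $k\sum_n 2^{-n}=2k<\infty$, hence cannot cover $\RR$. This contradiction shows no uniform bound $k$ works, so $\sbound(\mO,\mO)$ fails on $\RR$.

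The main obstacle — really the only place where care is needed — is making the measure estimate in the last paragraph airtight: one must choose the covers $\mathcal{U}_n$ so that \emph{every} member has controlled measure (hence the restriction to intervals of a fixed small length rather than, say, the cover by all bounded open intervals), and then the ``$\sigma$-subadditivity plus geometric series'' bound does the rest. An alternative to the measure argument, if one prefers to stay purely topological, is to note that $\sbound(\mO,\mO)$ over $\RR$ would give, restricting attention to $[0,1]$ say, a strong-measure-zero-like conclusion; but the direct Lebesgue-measure computation above is the cleanest and most self-contained route, and it also foreshadows the ``strong measure zero'' characterization advertised in the introduction.
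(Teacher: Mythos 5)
Your proof is correct. The paper states this proposition without proof (it is offered as an easy observation), and your four arguments are exactly the standard ones: finite subcover of $\mU_0$ plus singletons for compactness, the classical failure of Rothberger on $2^\w$, the usual $\sigma$-compact-implies-Menger selection, and the Lebesgue-measure estimate $\sum_n k\cdot 2^{-n}=2k<\infty$ against the covers of $\RR$ by intervals of length $2^{-n}$, which is airtight since $\RR$ has infinite measure.
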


On the other hand, for some choices of the families $\mA$ and $\mB$, the new definition may collapse to classical selection principles (the following result is somewhat of a generalization of Lemma 3.5 from \cite{Garcia-Ferreira1995}).
\begin{prop}\label{tight_eqv0}
	Let $(X,\tau)$ be a topological space and $p\in X$. Then the following properties are equivalent:
	\begin{itemize}
		\item[(1)] $\ssel{1}{\Omega_p, \Omega_p}$;
		\item[(2)] $\ssel{k}{\Omega_p, \Omega_p}$, for every $k\in\NN$;
		\item[(3)] $\ssel{bnd}{\Omega_p, \Omega_p}$.
	\end{itemize}
\end{prop}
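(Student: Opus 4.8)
The plan is to prove the chain of implications $(3)\Rightarrow(2)\Rightarrow(1)$ together with the trivial $(1)\Rightarrow(2)\Rightarrow(3)$, so that the real content is showing $\ssel{bnd}{\Omega_p,\Omega_p}$ implies $\ssel{1}{\Omega_p,\Omega_p}$. The implications $(1)\Rightarrow(2)$ (take $B_n$ to be a singleton and pad nothing) and $(2)\Rightarrow(3)$ (a fixed $k$ witnesses boundedness) are immediate from the definitions, so I would dispatch them in a sentence. The core argument is a diagonalization/regrouping trick exploiting that $\Omega_p$ is closed under finite modifications in a suitable sense.

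The key idea: suppose $\ssel{bnd}{\Omega_p,\Omega_p}$ holds and let $\seq{A_n:n\in\w}$ be a sequence from $\Omega_p$. Split $\w$ into infinitely many infinite blocks $\w=\bigsqcup_{j\in\w} I_j$ and, for each $j$, run the bounded selection principle on the subsequence $\seq{A_n:n\in I_j}$ to obtain finite sets $B_n^{(j)}\subset A_n$ with $|B_n^{(j)}|\le k_j$ and $\bigcup_{n\in I_j} B_n^{(j)}\in\Omega_p$; that is, $p\in\overline{\bigcup_{n\in I_j} B_n^{(j)}}$. The point is that each block independently produces a set whose closure contains $p$, and we only need to extract one point from each $A_n$ globally. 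Concretely, I would handle two cases separately. If for some $x\in A_n$ we have $p\in\overline{\{x\}}$, then picking that single $x$ already gives $\{x\}\in\Omega_p$ and we are done trivially (choosing singletons arbitrarily in the remaining coordinates). Otherwise $p\notin\overline{\{x\}}$ for all relevant $x$, and Fact~\ref{triv_fact0} applies: removing finitely many points from a set in $\Omega_p$ keeps it in $\Omega_p$.

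Here is how I would finish using Fact~\ref{triv_fact0}. Fix $k\in\w$ witnessing the bounded selection for the \emph{whole} sequence: we get $B_n\subset A_n$ finite with $|B_n|\le k$ and $C:=\bigcup_{n\in\w} B_n\in\Omega_p$. Now I want to thin each $B_n$ down to a singleton while preserving $p\in\overline{\,\cdot\,}$. Assuming the nontrivial case $p\notin\overline{\{x\}}$ for every $x\in C$, Fact~\ref{triv_fact0} tells us $p\in\overline{C\setminus F}$ for every finite $F\subset C$. The delicate point is that naively replacing $B_n$ by a single element $b_n\in B_n$ could destroy membership in $\Omega_p$, since we would be removing infinitely many points in total. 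The fix is the standard regrouping: partition $\w$ into consecutive finite intervals $J_0<J_1<\cdots$ each of size $k$ (or at least $k$), so that there are enough coordinates in each $J_i$ to "cover" one fixed $B_{n_i}$ one element at a time — i.e., enumerate $B_{n_i}=\{b_0,\dots,b_{k-1}\}$ and assign $b_\ell$ as the chosen singleton for the $\ell$-th coordinate in $J_i$. After doing this for a carefully chosen subsequence $(n_i)$ whose associated sets $B_{n_i}$ already satisfy $p\in\overline{\bigcup_i B_{n_i}}$ (obtained by running bounded selection on a sub-block, as above), every point of $\bigcup_i B_{n_i}$ is the chosen singleton of some coordinate, so the union of singletons contains $\bigcup_i B_{n_i}$ and hence lies in $\Omega_p$; on the remaining coordinates we pick any element of $A_n$ (nonempty since $p\in\overline{A_n}$).

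The main obstacle I anticipate is exactly this bookkeeping: arranging that the uniform bound $k$ lets one "spread out" the elements of each selected $B_n$ across enough distinct coordinates so that the final sequence of singletons recaptures all the points that were needed for $p$ to be in the closure. One must be careful that the subsequence on which we perform the spreading is itself a sequence of distinct coordinates in $\w$ and that the blocks $J_i$ are disjoint from each other and large enough — this is where the \emph{boundedness} (as opposed to merely finiteness) is essential, since with unbounded $|B_n|$ no fixed interval length would suffice. Once the combinatorial scheme is set up, verifying $p\in\overline{\bigcup_n\{b_n\}}$ is immediate because that union contains a set already known to be in $\Omega_p$, and $\Omega_p$ is upward closed under inclusion. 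I expect the write-up to be a page or less, with the regrouping lemma stated cleanly before the main argument.
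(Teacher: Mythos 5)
Your reduction to the single nontrivial implication $(3)\Rightarrow(1)$ is right, but the core construction does not work. In the ``spreading'' step you enumerate $B_{n_i}=\{b_0,\dots,b_{k-1}\}\subset A_{n_i}$ and declare $b_\ell$ to be the chosen singleton at the $\ell$-th coordinate $m$ of the block $J_i$. For this to be a legal play of $\ssel{1}{\Omega_p,\Omega_p}$ you need $b_\ell\in A_m$, and there is no reason for an element of $A_{n_i}$ to belong to $A_m$ when $m\neq n_i$. This is exactly the point where the tightness case differs from the covering case: for open covers one can transfer a selection across a block of coordinates by first passing to common refinements $\mU_0\wedge\cdots\wedge\mU_n$, but $\Omega_p$ admits no analogous operation, so the regrouping template you are importing has no valid starting point here. (There is also a secondary order-of-quantifiers wrinkle --- the block length $k$ is only known after $\sbound$ has been applied --- but that is repairable; the membership constraint is not.) Your preliminary case split and the appeal to Fact~\ref{triv_fact0} are likewise not needed.

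The argument the paper uses is much more direct and stays entirely within the original coordinates: apply $\sbound(\Omega_p,\Omega_p)$ once to get $B_n\subset A_n$ with $|B_n|\le k$ and $p\in\overline{\bigcup_n B_n}$, pad so that $|B_n|=k$ and write $B_n=\{b_n^1,\dots,b_n^k\}$, and form the $k$ ``columns'' $C_i=\{b_n^i:n\in\w\}$. Since $\bigcup_n B_n=\bigcup_{i\le k}C_i$ is a \emph{finite} union, $\overline{\bigcup_{i\le k}C_i}=\bigcup_{i\le k}\overline{C_i}$, so $p\in\overline{C_m}$ for some $m$, and $\seq{b_n^m:n\in\w}$ --- which does select one point from each $A_n$ --- witnesses $\ssel{1}{\Omega_p,\Omega_p}$. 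Your closing observation that a selection whose union contains a set of $\Omega_p$ is itself a witness is the right instinct; the columns are the correct object to apply it to.
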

\begin{proof}
	The implications $(1)\impls (2)\impls (3)$ are clear, so suppose $\sbound(\Omega_p, \Omega_p)$ holds and let $\seq{A_n:n\in\w}$ be a sequence of subsets of $X$ such that $p\in\overline{A_n}$ for every $n\in\w$. Since $\sbound(\Omega_p, \Omega_p)$ holds, there is a sequence $\seq{B_n:n\in\w}$ and $k\in\w$ with, for every $n\in\w$,
	\begin{enumerate}
		\item [a.] $B_n\subset A_n$;
		\item [b.] $p\in\overline{\bigcup_{n\in\w}B_n}$;
		\item [c.] $|B_n|\le k$.
	\end{enumerate}
	Without loss of generality, we may assume that $|B_n|= k$ for every $n\in\w$ and we write $B_n=\set{b_n^1, \dotsc, b_n^k}$ for each $n\in\w$. Now, let $C_i=\set{b_n^i:n\in\w}$ for each $i\le k$. 
	\begin{claim}
		There is an $i\le k$ such that $p\in \overline{C_i}$.
	\end{claim}
	\begin{proof}
		Just note that $\bigcup_{n\in\w}B_n=\bigcup_{i\le k}C_i$ and $\overline{\bigcup_{i\le k}C_i}=\bigcup_{i\le k}\overline{C_i}$.
	\end{proof}
	Let $m \le k$ be such that $p\in \overline{C_m}$. Then the sequence $\seq{b_n^m:n\in\w}$ witnesses $\ssel{1}{\Omega_p, \Omega_p}$ and the proof is complete.
\end{proof}

But even when the selection principle collapses, we may find new properties when looking into the new associated games. 

\section{The associated games}\label{SEC_Assoc_Games}
Just like we did with the selection principles, we formalize and fix a notation for the games that have already been discussed in the introduction as it follows:

\begin{defn}\label{k_game}
	Let $\mA,\mB$ be families of sets and $k\in\NN$. We denote by $\sgame{k}{\mA,\mB}$ the game, played between $\1$ and $\2$, in which in each inning $n\in\w$ $\1$ chooses $A_n\in\mA$ as $\2$ responds with $B_n\subset A_n$ such that $|B_n|\le k$ and $\2$ wins if $\bigcup_{n\in\w}B_n\in\mB$ ($\1$ wins otherwise).
\end{defn}

Again, we note that the classical case here is for $k=1$ in Definition \ref{k_game} with a small change: $\2$ is required to pick exactly one element. Just like with the selection principles, for the two cases we are going to discuss in this paper, this change actually makes no difference.

\begin{defn}\label{fin_game}
	Let $\mA,\mB$ be families of sets. We denote by $\sgame{fin}{\mA,\mB}$ the game, played between $\1$ and $\2$, in which in each inning $n\in\w$ $\1$ chooses $A_n\in\mA$ as $\2$ responds with $B_n\subset A_n$ finite and $\2$ wins if $\bigcup_{n\in\w}B_n\in\mB$ ($\1$ wins otherwise).
\end{defn}

It is worth mentioning here another variation of topological games that have been studied throughout the years (see e.g. \cite{Garcia-Ferreira1995} and \cite{Aurichi2018}). It goes as it follows:

	\begin{defn}
		Let $\mA,\mB$ be families of sets and $f\in\NN^\w$. We denote by $\sgame{f}{\mA,\mB}$ the game, played between $\1$ and $\2$, in which in each inning $n\in\w$ $\1$ chooses $A_n\in\mA$ as $\2$ responds with $B_n\subset A_n$ such that $|B_n|\le f(n)$ and $\2$ wins if $\bigcup_{n\in\w}B_n\in\mB$ ($\1$ wins otherwise).
\end{defn}

As with the classical selection principles, new associated games naturally arise from the new selection principles.
\begin{defn}\label{bnd_game}
	Let $\mA,\mB$ be families of sets. We denote by $\gbound(\mA,\mB)$ the game, played between $\1$ and $\2$, in which in each inning $n\in\w$ $\1$ chooses $A_n\in\mA$ as $\2$ responds with $B_n\subset A_n$ finite and $\2$ wins if there is an $k\in\w$ such that,
	\begin{enumerate}
		\item [a.] $\bigcup_{n\in\w}B_n\in\mB$;
		\item [b.] $|B_n|\le k$ for every $n\in\w$.
	\end{enumerate}
	Otherwise, we say that $\1$ wins.
\end{defn}

Given a game $\G$ played between $\1$ and $\2$, we denote the assertion ``$\1\ (\2)$ has a winning in $\G$'' by $\1\ (\2)\wins\G$ and its negation by $\1\ (\2)\doesntwin\G$.

 \begin{defn}
		Two games $\G_1$ and $\G_2$ are equivalent if the two following assertions hold.
		\begin{itemize}
			\item[(a)] $\1\wins\G_1\iff\1\wins\G_2$;
			\item[(b)]$\2\wins\G_1\iff\2\wins\G_2$.
	\end{itemize}
\end{defn}

As with the usual selection principles, we immediately have:

\begin{prop}\label{classical_nA_imp_sel_bnd}
	Given $\mA$ and $\mB$ families of sets,
	\[
	\1\centernot{\wins}{\gbound(\mA,\mB)}\implies {\sbound(\mA,\mB)}.
	\] 
\end{prop}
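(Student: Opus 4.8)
The plan is to prove the contrapositive: assuming $\sbound(\mA,\mB)$ fails, I will exhibit a winning strategy for $\1$ in $\gbound(\mA,\mB)$, so that the hypothesis $\1\doesntwin\gbound(\mA,\mB)$ yields $\sbound(\mA,\mB)$.

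First I would use the failure of $\sbound(\mA,\mB)$ to fix a witnessing sequence $\seq{A_n:n\in\w}$ of members of $\mA$: for every sequence $\seq{B_n:n\in\w}$ with each $B_n\subset A_n$ finite, there is no $k\in\w$ such that simultaneously $\bigcup_{n\in\w}B_n\in\mB$ and $|B_n|\le k$ for all $n\in\w$. Then I would let $\1$ play the ``blind'' strategy that, at inning $n$, responds with $A_n$ regardless of what $\2$ has done so far; this is a legal strategy precisely because each $A_n$ belongs to $\mA$. Any run of $\gbound(\mA,\mB)$ compatible with this strategy consists of $\1$'s moves $A_0,A_1,\dotsc$ together with finite sets $B_n\subset A_n$ chosen by $\2$, and by the choice of $\seq{A_n:n\in\w}$ no $k$ can witness conditions (a)--(b) of Definition \ref{bnd_game} for $\2$. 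Hence $\2$ loses every such run, the blind strategy is winning for $\1$, and the contrapositive is established.

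I do not expect a genuine obstacle here: this is the standard ``a winning strategy for $\1$ need only replay a bad sequence'' observation, and the only point to check is that the blind strategy is admissible, which is immediate since it plays elements of $\mA$. The same one-line reasoning explains why the analogous implications hold for the classical games, as remarked just before the statement.
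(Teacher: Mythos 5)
Your proof is correct and is exactly the standard ``replay a bad sequence'' argument that the paper treats as immediate (it states Proposition \ref{classical_nA_imp_sel_bnd} without proof). The contrapositive is set up properly: since $\2$ is forced to play finite $B_n\subset A_n$, the failure of $\sbound(\mA,\mB)$ at the witnessing sequence rules out every $k$ for every run against the blind strategy, so nothing further needs checking.
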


The following result will be useful in some arguments.
\begin{lemma}\label{LEMMA_bnd_game}
	Suppose $\sigma$ is a winning strategy for $\2$ in $\gbound(\mA,\mB)$. Then, for every $r\in{^{<\w}\mA}$ there is an $s\in{^{<\w}\mA}$ and an $m\in\NN$ such that $|\sigma(r^\smallfrown s^\smallfrown t)|\le m$ for every $t\in{^{<\w}\mA}$.
\end{lemma}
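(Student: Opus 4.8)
The plan is to argue by contradiction. Negating the conclusion, we fix an $r\in{^{<\w}\mA}$ with the property that for every $s\in{^{<\w}\mA}$ and every $m\in\NN$ there is a $t\in{^{<\w}\mA}$ with $|\sigma(r^\smallfrown s^\smallfrown t)|>m$; informally, no matter how play has continued past the initial segment $r$, $\2$ (while obeying $\sigma$) can still be forced to eventually answer with a set larger than any prescribed size. I would use this to build a single infinite run of $\gbound(\mA,\mB)$ that is consistent with $\sigma$ but along which the sizes of $\2$'s responses are unbounded. Since for $\2$ to win such a run it is in particular necessary that there be a uniform $k$ with $|B_n|\le k$ for all $n$ (condition (b) of Definition \ref{bnd_game}), this run is lost by $\2$ regardless of whether its union lies in $\mB$, contradicting that $\sigma$ is a winning strategy.

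For the construction I would recursively produce finite sequences $r=q_0\sqsubsetneq q_1\sqsubsetneq q_2\sqsubsetneq\cdots$ in ${^{<\w}\mA}$. Given $q_k\sqsupseteq r$, write $q_k=r^\smallfrown s_k$ and apply the hypothesis to $s_k$ with $m=k$, obtaining $t_k\in{^{<\w}\mA}$ with $|\sigma(q_k^\smallfrown t_k)|>k$; then set $q_{k+1}=q_k^\smallfrown t_k^\smallfrown\seq{A}$ for some fixed $A\in\mA$. Let $p=\bigcup_{k\in\w}q_k$. Because the lengths $|q_k|$ strictly increase, $p\in{^\w\mA}$, i.e.\ $p$ is a legitimate infinite sequence of moves for $\1$; letting $B_n$ denote $\2$'s responses when $\1$ plays $p$ and $\2$ follows $\sigma$, we have, for the index $n_k=|q_k^\smallfrown t_k|-1$, that $p\restrict(n_k+1)=q_k^\smallfrown t_k$, and hence $|B_{n_k}|=|\sigma(q_k^\smallfrown t_k)|>k$. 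Since $n_k+1\ge|q_k|\ge|r|+k\to\infty$, the sequence $(|B_n|)_{n\in\w}$ is unbounded and $\2$ loses this run, the desired contradiction.

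This is a diagonalization along a single branch of the game tree, so I do not expect a serious obstacle; the two points requiring a little care are the following. First, I must guarantee that the branch $p$ is genuinely infinite, since the witnessing $t_k$ furnished by the hypothesis could a priori be empty (which is why I append one extra arbitrary move $\seq{A}$ at every stage, forcing $|q_{k+1}|>|q_k|$). Second, I need the bookkeeping that identifies $\2$'s responses in the infinite run $p$ with the values of $\sigma$ on the finite approximations $q_k^\smallfrown t_k$, which is exactly the fact that a strategy for $\2$ depends only on the finite sequence of $\1$'s moves played so far, so that $\sigma(p\restrict(n_k+1))=\sigma(q_k^\smallfrown t_k)$. (In the degenerate case $\mA=\emptyset$ the statement is vacuous, so we may assume $\mA\neq\emptyset$.)
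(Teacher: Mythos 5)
Your proof is correct and is essentially the same diagonalization the paper uses: negate the conclusion, iteratively extract positions where $\sigma$'s response exceeds $k$, and derive a run consistent with $\sigma$ on which $\2$'s selections are unbounded, violating clause (b) of the winning condition. Your extra padding move $\seq{A}$ to guarantee the branch is genuinely infinite addresses a point the paper's proof glosses over (there it is implicit that the positions cannot stabilize, since a fixed finite set cannot have size exceeding every $n$), so no changes are needed.
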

\begin{proof}
	Suppose our thesis is false and let $r\in{^{<\w}\mA}$ be the sequence witnessing this assertion. Then there is an $s_1\in{^{<\w}\mA}$ such that $|\sigma(r^\smallfrown s_1)|>1$. Again, we may pick an $s_2\in{^{<\w}\mA}$ such that $|\sigma(r^\smallfrown s_1^\smallfrown s_2)|> 2$. Suppose we have picked $\set{s_i:i\le n}$ such that $|\sigma(r^\smallfrown s_1^\smallfrown\cdots^\smallfrown s_k)|> k$ for every $k\le n$. Then we may pick $s_{n+1}\in{^{<\w}\mA}$ such that $|\sigma(r^\smallfrown s_1^\smallfrown\cdots^\smallfrown s_n^\smallfrown s_{n+1})|> n+1$. We have just defined a sequence $\seq{s_n:n\in\NN}$ such that $|\sigma(r^\smallfrown s_1^\smallfrown\cdots^\smallfrown s_n)|> n$ for every $n\in\NN$, a contradiction to the fact that $\sigma$ is a winning strategy in $\gbound(\mA,\mB)$.
\end{proof}

Now, even though Proposition \ref{tight_eqv0} tell us that $\sbound(\Omega_p,\Omega_p)$ is not really a new selection principle, the same cannot be said about the game associated to this principle. In order to prove this, let us first characterize the new game in terms of the already known tightness games:

\begin{thm}\label{Atightbnd_iff_Atightk}
	$\1$ has a winning strategy in $\gbound(\W_p,\W_p)$ if, and only if, $\1$ has a winning strategy in $\sgame{k}{\W_p,\W_p}$ for every $k\in\NN$.
\end{thm}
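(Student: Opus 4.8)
The plan is to prove both directions. The direction ($\Leftarrow$) is the easy one: since any play of $\sgame{k}{\W_p,\W_p}$ in which $\2$ respects the bound $k$ is also a play of $\gbound(\W_p,\W_p)$, a winning strategy for $\1$ in $\gbound(\W_p,\W_p)$ restricts to a winning strategy in $\sgame{k}{\W_p,\W_p}$ for each fixed $k$ — indeed the ``only if'' direction is the trivial one, so the real content is the converse. Suppose then that $\1$ has a winning strategy $\sigma_k$ in $\sgame{k}{\W_p,\W_p}$ for every $k\in\NN$; I want to build a single winning strategy for $\1$ in $\gbound(\W_p,\W_p)$. The idea is that, in a run of $\gbound(\W_p,\W_p)$, $\2$ must eventually commit to \emph{some} finite bound $k$ on the sizes of its moves; if $\1$ could somehow "run all the $\sigma_k$ in parallel" and, whenever $\2$'s play is consistent with bound $k$, force a loss via $\sigma_k$, then $\1$ would win. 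The trick is to interleave the strategies so that in a single run of the game $\1$ is simultaneously playing against $\2$ in the sense of each $\sigma_k$.

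The key technical device I would use is a \emph{pairing / diagonalization over innings}. Partition $\w$ into infinitely many infinite pieces $\{D_k : k\in\NN\}$ and have $\1$ use $\sigma_k$ on the innings in $D_k$, treating $\2$'s responses on those innings as a run of $\sgame{k}{\W_p,\W_p}$. The subtlety is that $\sigma_k$ expects $\2$'s moves to have size $\le k$, but in the actual game $\2$ may play larger sets on innings in $D_k$ early on; however, since $\2$ must eventually be bounded by some $k_0$, on all but finitely many innings in $D_{k_0}$ the moves $\2$ makes have size $\le k_0$. To handle the finite exceptional set, I would exploit Fact \ref{triv_fact0}: modulo the degenerate case where $p\in\overline{\{x\}}$ for some chosen point $x$ (which can be dealt with separately, since then $\2$ trivially wins and one checks $\1$ cannot have had a winning strategy in $\sgame{1}{\W_p,\W_p}$ either), removing finitely many points from a set whose closure contains $p$ leaves a set whose closure still contains $p$. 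So $\1$ feeds $\sigma_{k_0}$ the \emph{tails} of $\2$'s moves on $D_{k_0}$, ignoring finitely many oversized early responses; the resulting run is a legal run of $\sgame{k_0}{\W_p,\W_p}$ against $\sigma_{k_0}$, hence $\1$ wins it, meaning $p\notin\overline{\bigcup_{n\in D_{k_0}} B_n'}$ where $B_n'$ are the (possibly truncated) responses — and by Fact \ref{triv_fact0} this gives $p\notin\overline{\bigcup_{n\in D_{k_0}} B_n}$, so $p\notin\overline{\bigcup_{n\in\w}B_n}$, i.e. $\1$ wins the run of $\gbound(\W_p,\W_p)$.

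The main obstacle I anticipate is bookkeeping about \emph{which} run of $\sgame{k}{\W_p,\W_p}$ each $\sigma_k$ is responding to: since $\sigma_k$ is a strategy, its move at a given inning depends on the full history of $\2$'s previous moves in \emph{that} subgame, and when $\1$ truncates an oversized response of $\2$ it must decide this truncation in advance (before seeing later moves), so I must set up the construction so that $\1$'s strategy in $\gbound(\W_p,\W_p)$ is well-defined as a function of the actual history — the cleanest way is to have $\1$ always pass the full set $B_n$ to $\sigma_k$ when $|B_n|\le k$ and, when $|B_n|>k$, pass some fixed size-$\le k$ subset $B_n'\subseteq B_n$ (chosen by a fixed rule, e.g. a fixed well-ordering) and then only invoke Fact \ref{triv_fact0} at the very end of the analysis on the tail where $|B_n|\le k_0$, never needing the deleted finitely many points to contribute. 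I would also need to double-check the degenerate case carefully, since Fact \ref{triv_fact0} has the hypothesis "$p\notin\overline{\{x\}}$ for every $x\in A$", and make sure the reduction to that case is clean — this is where I'd spend the most care. Once the parallel-play construction is set up correctly, the verification that $\1$ wins is essentially immediate from the definition of $\gbound$ (the bound $k_0$ exists, $p\notin\overline{\bigcup B_n}$).
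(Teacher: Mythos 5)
Your easy direction is fine, but the parallel-play construction for the substantial direction has a fatal gap at its final step. Suppose Bob's play $\seq{B_n:n\in\w}$ is bounded by $k_0$. Your scheme yields (at best) $p\notin\overline{\bigcup_{n\in D_{k_0}}B_n}$, and from this you conclude ``so $p\notin\overline{\bigcup_{n\in\w}B_n}$''. That inference runs the wrong way: $\bigcup_{n\in D_{k_0}}B_n$ is only a \emph{subset} of $\bigcup_{n\in\w}B_n$, and a point can lie in the closure of a set without lying in the closure of a chosen subset. The points Bob plays at the innings outside $D_{k_0}$ are entirely uncontrolled by $\sigma_{k_0}$. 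Nor is this repaired by invoking all the $\sigma_j$ simultaneously: that gives $p\notin\overline{\bigcup_{n\in D_j}B_n}$ (or a truncated version of it) for each $j$ separately, but $\bigcup_{n\in\w}B_n$ is the union of \emph{infinitely} many such pieces and closure does not commute with infinite unions --- in a countable space with a single non-isolated point $p$, every piece can omit $p$ from its closure while the total union does not. There is also a second, independent defect: on $D_j$ with $j<k_0$, Bob may play sets of size between $j+1$ and $k_0$ at infinitely many innings, so your truncation discards infinitely many points there, and Fact \ref{triv_fact0} only licenses discarding finitely many.

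The construction must be serial rather than parallel. Let Alice follow $\gamma_1$, a winning strategy in $\sgame{1}{\W_p,\W_p}$, until the first inning at which Bob plays a set whose size $k$ exceeds the current bound; at that moment Alice abandons the current auxiliary game entirely and restarts, following $\gamma_k$ from its initial position and feeding it only Bob's moves from that inning on. If Bob's play is bounded, he can trigger only finitely many restarts, so some cofinite tail $\seq{B_n:n\ge l}$ is a genuine complete run against a single winning strategy $\gamma_k$ in $\sgame{k}{\W_p,\W_p}$; hence $p\notin\overline{\bigcup_{n\ge l}B_n}$. The discarded initial segment $\bigcup_{n<l}B_n$ is finite (finitely many innings, each a finite set), so after the harmless normalization that a winning strategy for Alice never offers a point $x$ with $p\in\overline{\{x\}}$ (Bob would pick it and win at once), Fact \ref{triv_fact0} upgrades the conclusion to $p\notin\overline{\bigcup_{n\in\w}B_n}$. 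This is exactly where the finiteness hypothesis of Fact \ref{triv_fact0} is genuinely used, and it is the step your decomposition into infinitely many infinite blocks cannot supply.
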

The idea behind the proof of Theorem \ref{Atightbnd_iff_Atightk} is that, in $\gbound(\W_p,\W_p)$, $\1$ may pretend, at every inning, that the game just started without losing any relevant information, because, in view of Fact \ref{triv_fact0}, the finite set of points $\2$ have chosen thus far is irrelevant to the winning criteria. 

So $\1$ may start the game playing with a winning strategy in the game $\sgame{1}{\W_p,\W_p}$ and, if $\2$ chooses more than one point (say, $k$ points), $\1$ may just pretend the game restarted and then proceed to play with a winning strategy in the game $\sgame{k}{\W_p,\W_p}$. If $\2$ wants to win, he must eventually stop raising the amount of points he chooses, so from that moment on he will be playing against a winning strategy for $\1$ in some $\sgame{k}{\W_p,\W_p}$, and will, therefore, lose the game.

Formally:    
\begin{proof}[Proof of Theorem \ref{Atightbnd_iff_Atightk}]
	Given $k\in\NN$, the implication
	\[
	\1\wins\gbound(\W_p,\W_p)\implies \1\wins\sgame{k}{\W_p,\W_p}
	\]
	is obvious.
	
	So, suppose that for each $k\in\NN$ there is a winning strategy $\gamma_k$ for $\1$ in $\sgame{k}{\W_p,\W_p}$. Then we construct a strategy $\gamma$ for $\1$ in $\gbound(\W_p,\W_p)$ as it follows. First, let $\gamma(\seq{\,})=\gamma_1(\seq{\,})$. If $\2$ chooses $B_0\subset \gamma_1(\seq{\,})$ with $|B_0|\le 1$, then we let $\gamma(\seq{B_0})= \gamma_1(\seq{B_0})$. Otherwise, if $|B_0|=k_0$ for some $k_0>1$, let $\gamma(\seq{B_0})=\gamma_{k_0}(\seq{\,})$. In general, suppose $\gamma$ is defined up to $\seq{B_i:i\le n}$ and that, for each $m\le n$, $\gamma(\seq{B_i:i\le m})=\gamma_{k_m}(\seq{B_i:l_m<i\le m})$, for some $l_m\le m$ and $k_m\in\NN$. If $\2$ chooses $B_{n+1}\subset \gamma(\seq{B_i:i\le n})$ such that $|B_{n+1}|\le k_n$, then we simply put
	\[ 
		\gamma(\seq{B_i:i\le n}^\smallfrown B_{n+1})=\gamma_{k_n}(\seq{B_i:l_n<i\le n}^\smallfrown B_{n+1}).
	\] 
	Otherwise, if $|B_{n+1}|=k_{n+1}>k_n$, we let $\gamma(\seq{B_i:i\le n}^\smallfrown B_{n+1})=\gamma_{k_{n+1}}(\seq{\,})$.
	
	Suppose $\2$ plays $\seq{B_n:n\in\w}$ against $\gamma$ in such a way that, for every $n\in\w$, $|B_n|\le k$ for some (minimal) $k\in\NN$. Then there must be an (also minimal) $l\in\w$ such that $|B_l| = k$ and $|B_n| \le k$ for every $n\ge l$. Then, by the construction presented here, $\seq{B_n: n\ge l}$ is a play against $\gamma_k$. Finally, since each one of the $\gamma_n$'s are winning strategies for $\1$, we may apply Fact \ref{triv_fact0} to $\bigcup_{n\in\w}B_n$ and conclude that if $p\in \overline{\bigcup_{n\in\w}B_n}$, then $p\in\overline{\bigcup_{n\ge l}B_n}$, which would contradict the fact that $\gamma_k$ is a winning strategy in $\sgame{k}{\W_p, \W_p}$. It follows that $\gamma$ is indeed a winning strategy for $\1$ in $\gbound(\W_p,\W_p)$.
\end{proof}

\begin{cor}
	If $\ssel{1}{\Omega_p, \Omega_p}$ does not hold, then $\1$ has a winning strategy in $\gbound(\W_p,\W_p)$. 
\end{cor}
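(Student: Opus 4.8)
The plan is to deduce the statement from Theorem~\ref{Atightbnd_iff_Atightk}: by that theorem it suffices to show that, whenever $\ssel{1}{\Omega_p,\Omega_p}$ fails, $\1$ has a winning strategy in $\sgame{k}{\W_p,\W_p}$ for \emph{every} $k\in\NN$. So the work splits into two small steps: first, promoting the failure of $\ssel{1}{\Omega_p,\Omega_p}$ to the failure of $\ssel{k}{\Omega_p,\Omega_p}$ for all $k$; second, turning a sequence that witnesses such a failure into a winning strategy for $\1$ in the corresponding game.

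For the first step I would observe that any selection witnessing $\ssel{k}{\Omega_p,\Omega_p}$ is in particular a bounded selection (with bound $k$), so $\ssel{k}{\Omega_p,\Omega_p}\implies\sbound(\Omega_p,\Omega_p)$; and by Proposition~\ref{tight_eqv0}, $\sbound(\Omega_p,\Omega_p)$ is equivalent to $\ssel{1}{\Omega_p,\Omega_p}$. Taking contrapositives, the failure of $\ssel{1}{\Omega_p,\Omega_p}$ forces the failure of $\ssel{k}{\Omega_p,\Omega_p}$ for every $k\in\NN$. Hence, for each fixed $k$, there is a sequence $\seq{A_n^k:n\in\w}$ of elements of $\Omega_p$ such that for every sequence $\seq{B_n:n\in\w}$ with $B_n\subseteq A_n^k$ and $|B_n|\le k$ one has $p\notin\overline{\bigcup_{n\in\w}B_n}$.

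For the second step, fix $k$ and let $\1$ play, in $\sgame{k}{\W_p,\W_p}$, the non-adaptive strategy that answers $A_n^k$ at inning $n$ no matter what $\2$ has done so far; this is a legitimate strategy since each $A_n^k\in\W_p$. Any run of the game against this strategy produces exactly a sequence $\seq{B_n:n\in\w}$ of the kind above, so $p\notin\overline{\bigcup_{n\in\w}B_n}$ and $\2$ loses. Thus $\1\wins\sgame{k}{\W_p,\W_p}$, and since $k\in\NN$ was arbitrary, Theorem~\ref{Atightbnd_iff_Atightk} yields $\1\wins\gbound(\W_p,\W_p)$.

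There is no real obstacle here; the argument is essentially bookkeeping. The only point that deserves attention is the first step — one must route through $\sbound$ (equivalently, through the pairwise equivalence of the $\ssel{k}{\Omega_p,\Omega_p}$'s supplied by Proposition~\ref{tight_eqv0}) to conclude that $\ssel{1}{\Omega_p,\Omega_p}$ failing makes $\ssel{k}{\Omega_p,\Omega_p}$ fail for \emph{every} $k$, not just for some $k$. The rest is the routine fact that a single sequence witnessing the failure of a selection principle is already, played blindly, a winning strategy for the set-choosing player in the associated game.
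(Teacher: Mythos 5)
Your proposal is correct and is essentially the argument the paper intends (the corollary is stated without proof as an immediate consequence of Theorem \ref{Atightbnd_iff_Atightk}): you use Proposition \ref{tight_eqv0} to push the failure of $\ssel{1}{\Omega_p,\Omega_p}$ down to the failure of every $\ssel{k}{\Omega_p,\Omega_p}$, observe that a witnessing sequence played blindly is a winning strategy for $\1$ in $\sgame{k}{\W_p,\W_p}$, and then invoke the theorem. No gaps.
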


The following result shows us that there is an $f\in\NN^\w$ such that $\gbound(\W_p,\W_p)$ is not equivalent to $\sgame{f}{\W_p,\W_p}$. 
\begin{prop}[\cite{Garcia-Ferreira1995}, Example 3.7; \cite{Aurichi2018}, Example 3.5]
	There is a space $X$ with a point $p$ on which $\2\wins\sgame{f}{\W_p,\W_p}$ for any $f\in\NN^\w$ unbounded, but $\ssel{1}{\Omega_p, \Omega_p}$ fails.
\end{prop}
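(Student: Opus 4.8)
The plan is to produce a concrete countable Hausdorff witness, which is essentially the space used in \cite{Garcia-Ferreira1995} and \cite{Aurichi2018}. Let $X=\{p\}\cup(\w\times\w)$, where every point of $\w\times\w$ is isolated and a basic neighborhood of $p$ has the form $\{p\}\cup\big((\w\times\w)\setminus Z\big)$ with $Z$ a \emph{column-bounded} set, meaning there is $c\in\w$ with $|Z\cap(\{i\}\times\w)|\le c$ for every $i\in\w$. The column-bounded sets form a proper free ideal on $\w\times\w$ (trivially closed under subsets, and closed under finite unions since the intersection of $Z_0\cup Z_1$ with a column has size at most the sum of the two sizes), so this is a legitimate $T_1$, indeed Hausdorff, topology; and a direct computation shows that for $Y\subseteq\w\times\w$ one has $p\in\overline{Y}$ exactly when $Y$ is not column-bounded, i.e.\ when $Y$ meets columns of unbounded (finite or infinite) size. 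Thus $\Omega_p$ (that is, $\W_p$) consists of $\{p\}$ together with all non-column-bounded subsets of $\w\times\w$, up to adding $p$ itself.

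First I would check that $\ssel{1}{\Omega_p,\Omega_p}$ fails. Let $\1$ offer the sequence $\seq{A_n:n\in\w}$ with $A_n=\{n\}\times\w$; each $A_n$ is a single infinite column, hence not column-bounded, so $A_n\in\Omega_p$. But any choice $b_n\in A_n$ lies in the $n$-th column, so $\{b_n:n\in\w\}$ has at most one point in each column and is therefore column-bounded; thus $p\notin\overline{\{b_n:n\in\w\}}$. No sequence of singletons works, so $\ssel{1}{\Omega_p,\Omega_p}$ fails.

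Next I would give $\2$ a winning strategy in $\sgame{f}{\W_p,\W_p}$ for an arbitrary unbounded $f\in\NN^\w$. The point is that $f$ unbounded means $\{n:f(n)\ge k\}$ is infinite for every $k$, so $\2$ may fix once and for all naturals $n_1<n_2<\cdots$ with $f(n_k)\ge k$; and every $A\in\Omega_p$ not containing $p$ is non-column-bounded, hence contains, for each $k$, a column with at least $k$ of its points. The strategy for $\2$ is: at an inning $m\notin\{n_k:k\ge 1\}$ answer $B_m=\varnothing$ (and if $\1$ ever plays a set containing $p$, answer $\{p\}$ and win immediately); at inning $m=n_k$, find a column $i$ with $|A_m\cap(\{i\}\times\w)|\ge k$ and let $B_m$ be $k$ points of $A_m$ in that column. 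Then $|B_m|\le f(m)$ at every inning, and for each $k$ the set $\bigcup_{n\in\w}B_n$ contains at least $k$ points of the column $\2$ used at inning $n_k$, so $\bigcup_{n\in\w}B_n$ is not column-bounded; hence $p\in\overline{\bigcup_{n\in\w}B_n}$ and $\2$ wins.

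The main obstacle — really the only non-bookkeeping point — is making sure $\2$'s strategy succeeds against \emph{every} play of $\1$, not merely against the ``column play'' $A_n=\{n\}\times\w$ used to defeat $\ssel{1}{\Omega_p,\Omega_p}$; this is exactly where the hypothesis $A_n\in\Omega_p$ is spent, forcing each $A_n$ to carry arbitrarily tall columns from which $\2$ can harvest. It is instructive to note that this would \emph{fail} if the neighborhood filter of $p$ were the ``finite on each column'' filter of the sequential fan $S_\w$ (there $\2$ cannot control $\bigcup_{n\in\w}B_n$ even for unbounded $f$, since each $B_n$ is finite), so the use of the bounded-column ideal, rather than the finite-column one, is essential. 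Finally, one should keep in mind (via the Corollary to Theorem \ref{Atightbnd_iff_Atightk}) that on this same space $\1\wins\gbound(\W_p,\W_p)$, which is precisely what makes the example witness the announced non-equivalence of $\gbound(\W_p,\W_p)$ and $\sgame{f}{\W_p,\W_p}$.
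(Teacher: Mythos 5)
Your construction is correct and is essentially the example the paper cites (the bounded-column ideal space of \cite{Garcia-Ferreira1995} and \cite{Aurichi2018}); the verification that column-bounded sets form a proper ideal, that $\ssel{1}{\Omega_p,\Omega_p}$ fails against the play of single columns, and that an unbounded $f$ lets $\2$ harvest $k$ points from a single tall column at an inning where $f\ge k$ are all sound. Since the paper gives no proof and merely points to the references, there is nothing further to compare.
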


\begin{cor}\label{TIGHT_gf_not_eqv_gbnd}
	There is a space $X$ with a point $p$ on which $\2\wins\sgame{f}{\W_p,\W_p}$ (in particular, $\2\wins\gfin(\W_p,\W_p)$) and $\1\wins\gbound(\W_p,\W_p)$. 
\end{cor}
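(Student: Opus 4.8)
The plan is to read this corollary off almost immediately from the two preceding results, so the argument will be short. First I would apply the preceding Proposition to fix a single space $X$ together with a point $p\in X$ such that $\2\wins\sgame{f}{\W_p,\W_p}$ holds for every unbounded $f\in\NN^\w$, while $\ssel{1}{\Omega_p,\Omega_p}$ fails on $X$. In particular, taking the concrete unbounded function $f$ given by $f(n)=n+1$, we get $\2\wins\sgame{f}{\W_p,\W_p}$ for this $f$, which is the first assertion of the statement.

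Next I would justify the parenthetical remark that $\2\wins\gfin(\W_p,\W_p)$. For this it suffices to observe that any strategy for $\2$ in $\sgame{f}{\W_p,\W_p}$ is also a legal strategy for $\2$ in $\gfin(\W_p,\W_p)$: in each inning $\2$'s response is a finite subset of $\1$'s move (it happens to have size at most $f(n)$, but $\gfin$ imposes no size bound at all), and the winning condition $\bigcup_{n\in\w}B_n\in\W_p$ is verbatim the same in both games. Hence a winning strategy for $\2$ in $\sgame{f}{\W_p,\W_p}$ is in particular a winning strategy for $\2$ in $\gfin(\W_p,\W_p)$.

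Finally, for the statement about $\1$, I would invoke the Corollary following Theorem \ref{Atightbnd_iff_Atightk}: since $\ssel{1}{\Omega_p,\Omega_p}$ fails on $X$, it follows that $\1\wins\gbound(\W_p,\W_p)$. Both conclusions hold on the same pair $(X,p)$, which completes the proof. I do not expect any genuine obstacle here: all the mathematical content sits in the cited examples underlying the preceding Proposition and in the Corollary to Theorem \ref{Atightbnd_iff_Atightk}; the only points needing (trivial) care are that one fixed unbounded $f$ already suffices and that passing from $\sgame{f}{\W_p,\W_p}$ to $\gfin(\W_p,\W_p)$ only enlarges the set of moves available to $\2$, so it cannot turn a win into a loss.
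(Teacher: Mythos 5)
Your proposal is correct and follows exactly the route the paper intends: the space and point come from the preceding Proposition, $\2\wins\gfin(\W_p,\W_p)$ is immediate since relaxing the size constraint cannot hurt $\2$, and $\1\wins\gbound(\W_p,\W_p)$ comes from the Corollary to Theorem \ref{Atightbnd_iff_Atightk} applied to the failure of $\ssel{1}{\Omega_p,\Omega_p}$. No issues.
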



On the other hand, to show that $\gbound(\W_p, \W_p)$ is not equivalent to $\sgame{k}{\W_p,\W_p}$ for any $k\in\NN$, we will use the following result:

\begin{prop}[\cite{Aurichi2018}, Example 3.6]\label{ex_aurichi2018}
	For each $k\in\NN$ there is a countable space $X_k$ with only one non-isolated point $p_k$ on which $\1\wins\sgame{k}{\Omega_{p_k},\Omega_{p_k}}$ and $\2\wins\sgame{k+1}{\Omega_{p_k},\Omega_{p_k}}$.
\end{prop}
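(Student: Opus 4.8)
The plan is to build, for each $k\in\NN$, an explicit countable space $X_k$ by prescribing directly the neighbourhood filter $\mF_k$ of its only non‑isolated point $p_k$ on a countable ground set $Y_k$ (every other point being declared isolated); recall that the games $\sgame{m}{\Omega_{p_k},\Omega_{p_k}}$ depend on $X_k$ only through $\Omega_{p_k}$, which for subsets of $Y_k$ is exactly the family of $\mF_k$‑positive sets. The idea is to take $Y_k$ to be a union of $\w$ many pairwise disjoint ``blocks'', each block split into $k+1$ ``coordinate slots'', and to arrange $\mF_k$ so that membership in $\Omega_{p_k}$ forces a set to meet all $k+1$ slots of infinitely many blocks, while conversely any set meeting all $k+1$ slots of infinitely many blocks lies in $\Omega_{p_k}$. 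I expect the construction itself to be the main obstacle: the naive finitary recipes for such an $\Omega_{p_k}$ either fail to be a filter at all, or, when repaired, let $\1$ win even $\sgame{k+1}{\Omega_{p_k},\Omega_{p_k}}$ through an ``escape to infinity'' of the kind that trivialises sequential‑fan and tree variants, so the slots must be given enough internal (recursive, fan‑like) structure that $\mF_k$ is a genuine free filter with no such escape available.

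Granting the construction, I would prove $\1\wins\sgame{k}{\Omega_{p_k},\Omega_{p_k}}$ with a single non‑adaptive strategy for $\1$. Fix a partition $\w=\bigsqcup_{n\in\w}P_n$ of the block index set into infinitely many infinite pieces, and let $\1$ play in inning $n$ the set $A_n$ consisting of all slots of all blocks indexed by $P_n$; since $P_n$ is infinite, $A_n$ meets all $k+1$ slots of infinitely many blocks, so $A_n\in\Omega_{p_k}$. The crucial point is that the $P_n$ are pairwise disjoint, so the slots of any fixed block occur in $A_n$ for exactly one value of $n$; hence if $\2$ is limited to at most $k$ elements per inning, $\2$ can collect at most $k$ of the $k+1$ slots of any one block, so $\bigcup_{n\in\w}B_n$ meets all $k+1$ slots of no block and therefore $\bigcup_{n\in\w}B_n\notin\Omega_{p_k}$. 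If ``meeting all $k+1$ slots'' is implemented by an infinitary condition inside each slot, one must additionally check that a slot cannot be completed by contributions spread across several innings — this is exactly where the internal structure of the slots is used.

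For $\2\wins\sgame{k+1}{\Omega_{p_k},\Omega_{p_k}}$ I would give $\2$ a greedy strategy with bookkeeping. Maintain a finite set $U$ of ``used'' block indices, initially empty. In inning $n$, $\1$ plays some $A_n\in\Omega_{p_k}$; by the defining property of $\Omega_{p_k}$, $A_n$ meets all $k+1$ slots of infinitely many blocks, so there is an index $t_n\notin U$ all of whose $k+1$ slots $A_n$ meets. Let $\2$ pick one point from each of those $k+1$ slots (so $|B_n|\le k+1$) and put $t_n$ into $U$. Then the $t_n$ are pairwise distinct and $\bigcup_{n\in\w}B_n$ meets all $k+1$ slots of every block $t_n$, hence of infinitely many blocks, so $\bigcup_{n\in\w}B_n\in\Omega_{p_k}$. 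The only thing to verify is that a suitable $t_n$ always exists, which is immediate from $A_n\in\Omega_{p_k}$ and the finiteness of $U$.

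Thus the heart of the argument is the construction of $X_k$ with the stated positivity family; once that is in place, both strategies are short. I would expect the bulk of the write‑up to go into (a) checking that $\mF_k$ is a free filter, so that $X_k$ really is a countable space with $p_k$ its unique non‑isolated point, and (b) ruling out, for $\sgame{k+1}{\Omega_{p_k},\Omega_{p_k}}$, that $\1$ has an escaping strategy of her own — which is precisely what forces the slots to be more than bare sets.
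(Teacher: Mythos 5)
There is a genuine gap, and it sits exactly where you yourself flagged it: the construction of $X_k$ is never carried out, and the property you assign to $\Omega_{p_k}$ is not merely delicate but impossible to realise. In a countable space whose only non-isolated point is $p_k$, the subsets of $X_k\setminus\{p_k\}$ that are \emph{not} in $\Omega_{p_k}$ are exactly the sets contained in the complement of some neighbourhood of $p_k$; since neighbourhoods are closed under finite intersection, this negligible family is an ideal. Your proposed characterisation --- $A\in\Omega_{p_k}$ if and only if $A$ meets all $k+1$ slots of infinitely many blocks --- contradicts this: for $k\ge 1$, take $A$ to be the union of slots $0,\dotsc,k-1$ of every block and $B$ the union of slot $k$ of every block. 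Neither $A$ nor $B$ meets all $k+1$ slots of a single block, so both would have to be negligible, yet $A\cup B$ is the entire ground set, which must lie in $\Omega_{p_k}$ because $p_k$ is not isolated. So no topology gives you the family you describe, and both of your strategy arguments lean on both directions of that unrealisable equivalence: $\1$'s argument needs ``positive $\Rightarrow$ completes some block'' to conclude $\bigcup_{n\in\w}B_n\notin\Omega_{p_k}$, while $\2$'s argument needs ``completes infinitely many blocks $\Rightarrow$ positive'' to conclude $\bigcup_{n\in\w}B_n\in\Omega_{p_k}$, and also the converse to locate a fresh block inside each $A_n$. Any genuine ideal/coideal pair must weaken at least one of these directions, and then at least one of the two strategies breaks. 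Your remark that the slots need ``enough internal recursive, fan-like structure'' is precisely the missing mathematics, not a verification left to the reader.

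It is worth stressing that for this particular statement the construction \emph{is} the proof: the paper itself does not argue it but imports it wholesale from Example 3.6 of the cited reference, so a blind proof must actually exhibit $X_k$ and verify that $\Omega_{p_k}$ has the combinatorial shape your strategies require. The two strategy outlines you give (a non-adaptive partition strategy for $\1$, a greedy fresh-block strategy for $\2$) are the right sort of endgame once a correct space is in hand, but as written the proposal establishes nothing, because every step after ``granting the construction'' quantifies over an object that, in the form you specify it, cannot exist.
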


\begin{cor}\label{bnd_tightgame_ex2}
	For each $k\in\NN$ there is a countable space $X_k$ with only one non-isolated point ${p_k}$ on which $\1\wins\sgame{k}{\Omega_{p_k},\Omega_{p_k}}$, and $\2\wins\gbound(\Omega_{p_k},\Omega_{p_k})$.
\end{cor}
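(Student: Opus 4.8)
The plan is to avoid any new construction: take $X_k$ (with its unique non-isolated point $p_k$) to be exactly the space furnished by Proposition \ref{ex_aurichi2018}, and observe that the winning strategy it gives $\2$ in $\sgame{k+1}{\Omega_{p_k},\Omega_{p_k}}$ already serves as a winning strategy for $\2$ in $\gbound(\Omega_{p_k},\Omega_{p_k})$. The first half of the conclusion, $\1\wins\sgame{k}{\Omega_{p_k},\Omega_{p_k}}$, is then part of Proposition \ref{ex_aurichi2018} verbatim and needs no further argument.

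For the second half, let $\sigma$ be a winning strategy for $\2$ in $\sgame{k+1}{\Omega_{p_k},\Omega_{p_k}}$, also provided by Proposition \ref{ex_aurichi2018}. Every legal response of $\2$ under $\sigma$ is a subset of size at most $k+1$ of the set just played by $\1$; since in $\gbound(\Omega_{p_k},\Omega_{p_k})$ player $\2$ is allowed to answer with \emph{any} finite subset, $\sigma$ can be read off, move for move, as a strategy for $\2$ in $\gbound(\Omega_{p_k},\Omega_{p_k})$. It remains to check it is winning there: in any run $\seq{A_n,B_n:n\in\w}$ in which $\2$ follows $\sigma$, one has $|B_n|\le k+1$ for every $n\in\w$, so condition (b) of Definition \ref{bnd_game} is met with the uniform bound $k+1$; and because $\sigma$ wins $\sgame{k+1}{\Omega_{p_k},\Omega_{p_k}}$, we also get $\bigcup_{n\in\w}B_n\in\Omega_{p_k}$, so condition (a) holds as well. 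Hence $\2$ wins, i.e. $\2\wins\gbound(\Omega_{p_k},\Omega_{p_k})$.

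I do not expect a genuine obstacle: the corollary is essentially the trivial implication ``$\2\wins\sgame{m}{\mA,\mB}\implies\2\wins\gbound(\mA,\mB)$ for every $m\in\NN$'' applied to $m=k+1$, combined with Proposition \ref{ex_aurichi2018}. The only point that deserves to be spelled out is that the size restriction $|B_n|\le k+1$ inherited from the rules of $\sgame{k+1}{\Omega_{p_k},\Omega_{p_k}}$ is exactly what supplies the uniform bound demanded by the winning condition of the bounded game, so no extra bookkeeping (and, in particular, no appeal to Lemma \ref{LEMMA_bnd_game} or Fact \ref{triv_fact0}) is required.
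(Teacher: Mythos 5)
Your proof is correct and is exactly the argument the paper intends: the remark following the corollary confirms that $\2\wins\gbound(\Omega_{p_k},\Omega_{p_k})$ is deduced precisely because $\2\wins\sgame{k+1}{\Omega_{p_k},\Omega_{p_k}}$, via the trivial implication (also noted at the start of the proof of Theorem \ref{Btightbnd_iff_Btightk}) that a winning strategy in $\sgame{m}{\W_p,\W_p}$ already supplies the uniform bound $m$ required by the winning condition of $\gbound(\W_p,\W_p)$. No further comment is needed.
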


We note that Proposition \ref{ex_aurichi2018} gives us examples on which, for each $k\in\w$, $\2\wins\gbound(\Omega_{p_k},\Omega_{p_k})$. But we concluded this because $\2\wins\sgame{k+1}{\Omega_{p_k},\Omega_{p_k}}$. As the following theorem tells us, this was no coincidence.

\begin{thm}\label{Btightbnd_iff_Btightk}
	$\2$ has a winning strategy in $\gbound(\W_p,\W_p)$ if, and only if, there is an $m\in\NN$ such that $\2$ has a winning strategy in $\sgame{m}{\W_p,\W_p}$.
\end{thm}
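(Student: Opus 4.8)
The backward direction is immediate: if $\2$ has a winning strategy in $\sgame{m}{\W_p,\W_p}$ for some fixed $m$, that same strategy is legal in $\gbound(\W_p,\W_p)$ (responses have size $\le m$) and witnesses condition (b) with $k = m$, so $\2$ wins $\gbound(\W_p,\W_p)$. The content is the forward direction. So suppose $\sigma$ is a winning strategy for $\2$ in $\gbound(\W_p,\W_p)$, and we must produce a uniform bound $m$ and a winning strategy for $\2$ in $\sgame{m}{\W_p,\W_p}$.

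**The key extraction argument.** The plan is to use Lemma \ref{LEMMA_bnd_game} to ``prune'' $\sigma$ into a strategy that never exceeds a fixed finite bound. Applying the lemma with $r = \seq{\,}$ gives an $s_0 \in {^{<\w}\mA}$ and an $m \in \NN$ such that $|\sigma(s_0 {}^\smallfrown t)| \le m$ for every $t \in {^{<\w}\mA}$. The idea is to let $\2$, in the game $\sgame{m}{\W_p,\W_p}$, internally simulate a play of $\gbound(\W_p,\W_p)$ in which $\1$ has first been fed the finite sequence $s_0$ of moves. More precisely: $\2$ does not actually get to choose $\1$'s moves, but $\2$'s new strategy $\tau$ will, against any play $\seq{A_n : n \in \w}$ of $\1$, consult $\sigma$ on the sequence $s_0 {}^\smallfrown \seq{A_0, \dots, A_n}$ — here we need $s_0$ to consist of \emph{elements of $\W_p$}, which it does since it lies in ${^{<\w}\mA}$ with $\mA = \W_p$ — hmm, but $\sigma$ expects $\2$'s responses interleaved, so more carefully: $\tau$ simulates a $\gbound$-play where the opening segment has $\1$ playing the entries of $s_0$ and $\2$ responding via $\sigma$ to each, and then $\1$'s ``real'' moves $A_0, A_1, \dots$ are appended, with $\2$ again responding via $\sigma$. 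By the choice of $s_0$ and $m$, every $\sigma$-response after the segment $s_0$ has size $\le m$, so $\tau$ is a legal strategy in $\sgame{m}{\W_p,\W_p}$: define $\tau(\seq{A_0,\dots,A_n})$ to be $\sigma$'s response in this simulated play to $\1$'s move $A_n$.

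**Why $\tau$ wins.** Suppose $\2$ plays $\tau$ against an arbitrary $\1$-play $\seq{A_n : n \in \w}$ in $\sgame{m}{\W_p,\W_p}$, producing responses $\seq{B_n : n \in \w}$. In the simulated $\gbound$-play, $\sigma$ faced the $\1$-play obtained by prepending the entries of $s_0$; call $\2$'s responses to the $s_0$-part $C_0, \dots, C_{\ell-1}$ (each finite, of some sizes not necessarily $\le m$, but finite, hence their union $F = \bigcup_j C_j$ is finite). Since $\sigma$ is winning in $\gbound$, $\2$ wins that simulated play, so in particular $p \in \overline{F \cup \bigcup_{n\in\w} B_n}$. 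Now I invoke Fact \ref{triv_fact0}: provided $p \notin \overline{\{x\}}$ for every $x$ in the relevant set, removing the finite set $F$ preserves membership in the closure, giving $p \in \overline{\bigcup_{n \in \w} B_n}$, i.e. $\bigcup_n B_n \in \W_p$. There is a subtlety: Fact \ref{triv_fact0} requires $p \notin \overline{\{x\}}$ for each point $x$ being removed. If some $x \in F$ has $p \in \overline{\{x\}}$, then $\{x\} \in \W_p$ already and $\2$ can handle this trivially (winning by playing $\{x\}$-type singletons isn't available since $x$ need not be in the $A_n$'s, so one instead notes that in this degenerate case $\W_p$-selection problems collapse); I expect to argue that WLOG we may assume no such $x$ exists, or handle it by a direct observation. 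Modulo this bookkeeping, $\tau$ is a winning strategy for $\2$ in $\sgame{m}{\W_p,\W_p}$, completing the proof.

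**Main obstacle.** The principal difficulty is setting up the simulation cleanly: $\gbound$ and $\sgame{m}$ have the same move structure for $\1$ but $\2$'s response in the simulation depends on a fabricated history (the $s_0$-segment plus $\2$'s own $\sigma$-responses to it), and one must check that feeding $\sigma$ this history is legitimate and that the resulting $\tau$ genuinely depends only on the \emph{visible} $\sgame{m}$-history. The second, more technical point is the application of Fact \ref{triv_fact0} to discard the finite ``junk'' $F$ accumulated during the $s_0$-segment — this is exactly the same maneuver as in the proof of Theorem \ref{Atightbnd_iff_Atightk}, and is the reason the $\W_p$ case (as opposed to $\mO$) behaves this way.
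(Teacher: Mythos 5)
Your proposal is correct and follows essentially the same route as the paper's proof: apply Lemma \ref{LEMMA_bnd_game} with $r=\seq{\,}$ to obtain $s$ and $m$, define the new strategy by prepending $s$ (together with $\sigma$'s responses to it) to the visible history, and discard the finite junk from the $s$-segment via Fact \ref{triv_fact0}. The ``subtlety'' you flag about points $x$ with $p\in\overline{\{x\}}$ is resolved in the paper exactly as you anticipate, by assuming without loss of generality that $\1$ plays only sets $A\in\W_p$ with $p\notin\overline{\{a\}}$ for all $a\in A$.
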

\begin{proof}
	It is clear that if $\2$ has a winning strategy in $\sgame{m}{\W_p,\W_p}$ for some $m\in\NN$, then $\2$ has a winning strategy in $\gbound(\W_p,\W_p)$. 
	
	So, suppose $\2$ has a winning strategy $\sigma$ in $\gbound(\W_p,\W_p)$. Without loss of generality, we may assume that $\1$ plays only with sets $A\in\W_p$ such that $p\not\in\overline{\{a\}}$ for every $a\in A$. Let $s\in{^{<\w}\W_p}$ and $m\in\NN$ be as in Lemma \ref{LEMMA_bnd_game} for $r=\seq{\,}$. Then we define a strategy $\sigma_m$ for $\2$ in $\sgame{m}{\W_p,\W_p}$ as it follows: for each $t\in{^{<\w}\W_p}$, let $\sigma_m(t)=\sigma(s^\smallfrown t)$. 
	
	To see that this is a winning strategy, let $\seq{A_n:n\in\w}$ be a sequence of elements of $\W_p$. By construction, $|\sigma_m(A_0, \dotsc A_k)|\le m$ for every $k\in \w$. Also, since $\sigma$ is a winning strategy, $p\in\overline{\sigma(s\upharpoonright 1)\cup \cdots\cup\sigma(s)\cup\left(\bigcup_{k\in\w}\sigma(s^\smallfrown \seq{A_0, \dotsc, A_k})\right)}$. Finally, if we apply Fact \ref{triv_fact0} to the set $\sigma(s\upharpoonright 1)\cup \cdots\cup\sigma(s)\cup\left(\bigcup_{k\in\w}\sigma(s^\smallfrown \seq{A_0, \dotsc, A_k})\right)$, we conclude that $p\in \overline{\bigcup_{k\in\w}\sigma(s^\smallfrown \seq{A_0, \dotsc, A_k})}=\overline{\bigcup_{k\in\w}\sigma_m(\seq{A_0, \dotsc, A_k})}$, and the proof is complete.
\end{proof}

We note that the characterizations presented in Theorems \ref{Atightbnd_iff_Atightk} and \ref{Btightbnd_iff_Btightk} would still hold if we replace ``$\W_p$'' with ``$\D$'' (the set of dense subsets of a given space), because the key argument used there was that, except for some trivial cases, we can ignore finite innings of the game to check the winning criteria. The same thing cannot be said about $\gbound(\mO,\mO)$, because if the game is played over a compact space, for instance, $\2$ may win in the very first inning - but, on the other hand, $\1$ has a winning strategy in $\sgame{k}{\mO,\mO}$ over $2^\w$ for every $k\in\NN$.

So now we turn our attention to covering games:

\begin{prop}\label{bnd_cover_games}
	In every compact space, $\2\wins\gbound(\mO,\mO)$, but $\1\wins\sgame{1}{\mO,\mO}$ over $2^\w$.\\
	Moreover, $\2\wins\gfin(\mO,\mO)$ over every $\sigma$-compact space, but $\1\wins\gbound(\mO,\mO)$ over $\RR$.
\end{prop}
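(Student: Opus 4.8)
The plan is to treat the four assertions separately; three are immediate and only $\1\wins\gbound(\mO,\mO)$ over $\RR$ has real content. For the two statements where $\2$ wins I would exhibit explicit strategies. If $X$ is compact, $\2$ plays $\gbound(\mO,\mO)$ by extracting, in inning $0$, a finite subcover $B_0\subseteq A_0$ of $X$ from $\1$'s cover, and answering $B_n=\emptyset$ in every later inning; then $\bigcup_{n\in\w}B_n=B_0$ is a cover of $X$ and $|B_n|\le|B_0|$ for all $n$, so $\2$ wins (in fact already after the first inning). If $X=\bigcup_{n\in\w}K_n$ is $\sigma$-compact with each $K_n$ compact, $\2$ plays $\gfin(\mO,\mO)$ by answering the cover $A_n$ with a finite $B_n\subseteq A_n$ covering $K_n$; then $\bigcup_{n\in\w}B_n$ covers $\bigcup_{n\in\w}K_n=X$.

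For $\1\wins\sgame{1}{\mO,\mO}$ over $2^\w$ I would give $\1$ a predetermined strategy that ignores $\2$'s moves: in inning $n$ she plays the finite clopen cover $\mU_n=\set{[s]:s\in 2^{k_n}}$ of $2^\w$ by cylinders of length $k_n$, where $\seq{k_n:n\in\w}$ grows fast enough that $\sum_{n\in\w}2^{-k_n}<1$ (e.g.\ $k_n=n+2$). Writing $\mu$ for the natural product measure on $2^\w$, so that $\mu([s])=2^{-|s|}$, whatever $\2$ selects the family $\bigcup_{n\in\w}B_n$ covers a set of $\mu$-measure at most $\sum_{n\in\w}2^{-k_n}<1=\mu(2^\w)$, hence omits a point of $2^\w$ and is not a cover; so $\1$ wins. (Alternatively one could invoke the classical game-theoretic characterization of the Rothberger property together with the failure of $\ssel{1}{\mO,\mO}$ on $2^\w$ recorded in Proposition~\ref{covering_ex}, but the direct strategy needs no such input.)

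Finally, $\1\wins\gbound(\mO,\mO)$ over $\RR$ is the only point with content. The quickest route is to observe that $\sbound(\mO,\mO)$ fails over $\RR$ by Proposition~\ref{covering_ex}, so the contrapositive of Proposition~\ref{classical_nA_imp_sel_bnd} immediately gives $\1\wins\gbound(\mO,\mO)$. For a self-contained argument I would instead give $\1$ the predetermined strategy that in inning $n$ plays a cover $\mU_n\in\mO$ of $\RR$ by open intervals each of length at most $2^{-n-1}$ (such covers obviously exist). If $\2$'s responses have unbounded sizes, he loses outright by the rules of $\gbound$; if $|B_n|\le k$ for every $n$, then $\bigcup_{n\in\w}B_n$ covers a set of Lebesgue measure at most $\sum_{n\in\w}k\,2^{-n-1}=k<\infty$, so it cannot cover $\RR$. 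Either way $\1$ wins. The only point that needs care — and the closest thing to an obstacle — is the compound winning condition of $\gbound$: $\2$ must achieve both a cover \emph{and} a uniform bound on the $|B_n|$, so $\1$ wins as soon as she can spoil either half, and the shrinking interval-length schedule is engineered precisely so that ``uniformly bounded'' is incompatible with ``covers $\RR$''. Beyond this bookkeeping and an elementary measure estimate there is no genuine difficulty.
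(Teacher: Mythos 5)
Your proposal is correct. The paper states this proposition without proof, treating all four claims as standard, and the arguments you supply are precisely the ones left implicit: a finite subcover in the first inning (followed by empty responses, which the rules permit) for compact spaces; covering the compact pieces one inning at a time for $\sigma$-compact spaces; and measure estimates against predetermined strategies for $2^\w$ and $\RR$. The only substantive point is the last assertion, and both of your routes work: the contrapositive of Proposition~\ref{classical_nA_imp_sel_bnd} combined with the failure of $\sbound(\mO,\mO)$ on $\RR$ recorded in Proposition~\ref{covering_ex}, or the direct strategy with interval lengths shrinking like $2^{-n-1}$, which correctly exploits the conjunctive winning condition of $\gbound$ (a uniform bound $k$ forces total measure at most $k<\infty$, while unbounded selections lose outright). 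The direct argument has the small added value of showing that $\1$ wins with a predetermined strategy that ignores $\2$'s moves.
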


Now suppose $X$ is a space with a compact subset $K$ such that, for every $V\supset K$ open, $\2$ has a winning strategy in $\sgame{1}{\mO,\mO}$ over the complement $X\setminus V$. Clearly, this implies that $\2$ has a winning strategy over $X$ in $\gbound(\mO,\mO)$. What is surprising, though, is that the converse actually holds if $X$ is regular. To prove this, however, we take a step back to define some other variations of the classical selection principles and games.

\section{The ``$\modd\fin$'' and ``$\modd 1$'' variations}\label{SEC_mod1_modfin}
Consider the following simple variations of the classical selection principles, with their respective associated games.

\begin{defn}\label{modfin_sel}
	Let $f\in {\NN}^\w$, and $\mA,\mB$ be families of sets. We say that $\sel{f}{\fin}{\mA,\mB}$ holds if, for every sequence $\seq{A_n:n\in\w}$ of elements of $\mA$, there is a sequence $\seq{B_n:n\in\w}$, such that, 
	\begin{enumerate}
		\item [a.] $B_n\subset A_n$ is finite for every $n\in\w$;
		\item [b.] $\bigcup_{n\in\w}B_n\in\mB$;
		\item [c.] $\set{n\in\w: |B_n|>f(n)}$ is finite.
	\end{enumerate}
	When there is a $k\in\NN$ with $f\equiv k$ we simply write $\sel{k}{\fin}{\mA,\mB}$ instead of $\sel{f}{\fin}{\mA,\mB}$. 
	
	We then define the property $\sel{f}{1}{\mA,\mB}$ as $\sel{f}{\fin}{\mA,\mB}$ with condition (c) replaced by ``$\set{n\in\w: |B_n|>f(n)}\subset\{0\}$'', that is, ``$|B_n|\le f(n)$ for every $n\ge 1$''.  
\end{defn}

\begin{defn}\label{modfin_game}
	Let $f\in {\NN}^\w$, and $\mA,\mB$ be families of sets. We denote by $\game{f}{\fin}{\mA,\mB}$ the game, played between $\1$ and $\2$, in which in each inning $n\in\w$ $\1$ chooses $A_n\in\mA$ as $\2$ responds with $B_n\subset A_n$ finite and $\2$ wins if,
	\begin{enumerate}
		\item [a.] $\bigcup_{n\in\w}B_n\in\mB$;
		\item [b.] $\set{n\in\w: |B_n|>f(n)}$ is finite.
	\end{enumerate}
	When there is a $k\in\NN$ with $f\equiv k$ we simply write $\game{f}{\fin}{\mA,\mB}$ as $\game{k}{\fin}{\mA,\mB}$.

	We then define the game $\game{f}{1}{\mA,\mB}$ as $\game{f}{\fin}{\mA,\mB}$ with condition (b) replaced by ``$\set{n\in\w: |B_n|>f(n)}\subset\{0\}$'' (that is, in other to have a chance of winning the game, $\2$ may choose more elements then $f$ allows only in the first inning).  
\end{defn}

Again, as with the usual selection principles, we also have here:

\begin{prop}\label{classical_nA_imp_sel_mod}
	Let $f\in{\NN}^\w$, and $\mA,\mB$ be families of sets. Then the following implications hold
	\begin{itemize}
		\item $\1\centernot{\wins}{\game{f}{\fin}{\mA,\mB}}\impls {\sel{f}{\fin}{\mA,\mB}}$;
		\item $\1\centernot{\wins}{\game{f}{1}{\mA,\mB}}\impls {\sel{f}{1}{\mA,\mB}}$.
	\end{itemize} 
\end{prop}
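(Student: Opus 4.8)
The statement to prove is Proposition~\ref{classical_nA_imp_sel_mod}, which asserts the two implications
\[
\1\centernot{\wins}{\game{f}{\fin}{\mA,\mB}}\impls {\sel{f}{\fin}{\mA,\mB}}
\qquad\text{and}\qquad
\1\centernot{\wins}{\game{f}{1}{\mA,\mB}}\impls {\sel{f}{1}{\mA,\mB}}.
\]

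The plan is to argue by contraposition in both cases, exactly as in the folklore proof that failure of the selection principle gives \1{} a winning strategy in the associated game. First I would assume $\sel{f}{\fin}{\mA,\mB}$ fails; this means there is a ``bad'' sequence $\seq{A_n:n\in\w}$ of elements of $\mA$ witnessing the failure, that is, one for which \emph{no} choice of finite $B_n\subset A_n$ satisfies both $\bigcup_n B_n\in\mB$ and $\set{n:|B_n|>f(n)}$ finite. The key observation is that the statement ``$\seq{A_n}$ is bad'' does not depend on any finite initial segment: if $\seq{A_n:n\in\w}$ is bad, then for each $N$ the tail $\seq{A_n:n\ge N}$ is also bad, because prepending finitely many arbitrary finite selections changes neither membership of the union in $\mB$-difficulty nor the finiteness of the exceptional set in an essential way --- more precisely, padding a good tail-selection with $N$ arbitrary finite sets in front still yields a selection whose exceptional set is finite and whose union lies in $\mB$, contradicting badness of $\seq{A_n}$. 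Hence \1{} can fix one bad sequence $\seq{A_n:n\in\w}$ in advance and simply play $A_n$ on inning $n$, ignoring \2's moves entirely. Whatever \2{} responds with, say $B_n\subset A_n$ finite, the resulting full play has $\bigcup_n B_n\notin\mB$ or $\set{n:|B_n|>f(n)}$ infinite by badness, which is exactly the losing condition for \2{} in $\game{f}{\fin}{\mA,\mB}$; so this blind strategy wins for \1.

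For the ``$\modd 1$'' version the same skeleton works, but one must be slightly careful with the single allowed exceptional inning. Assume $\sel{f}{1}{\mA,\mB}$ fails and let $\seq{A_n:n\in\w}$ be a witnessing bad sequence. Again \1{} plays $A_n$ on inning $n$ blindly. If \2{} produces $\seq{B_n:n\in\w}$ with $B_n\subset A_n$ finite, then by badness either $\bigcup_n B_n\notin\mB$ or $\set{n:|B_n|>f(n)}\not\subset\{0\}$; in either case \2{} has not met the winning condition of $\game{f}{1}{\mA,\mB}$, so \1{} wins. Note that here we do not even need the tail-invariance observation, since the game itself only forgives the zeroth inning and \1's blind play of the fixed bad sequence directly transfers. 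For uniformity of exposition I would still phrase both arguments the same way, simply remarking that the $\modd 1$ case is the special instance where ``finite exceptional set'' is tightened to ``exceptional set contained in $\{0\}$''.

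The only point that needs genuine (though minor) care --- and the step I expect to be the main obstacle, such as it is --- is justifying that a bad sequence for $\sel{f}{\fin}$ stays bad when we ignore \2's first few responses, i.e.\ the tail-invariance used to confirm that \1's blind play is legitimate. This is not actually required if \1{} commits to the \emph{whole} bad sequence from inning $0$ and just never looks at \2's moves, which is what I would do: then no tail argument is needed at all, the transfer from ``$\seq{A_n}$ is bad'' to ``this play is a loss for \2'' is immediate from Definitions~\ref{modfin_sel} and~\ref{modfin_game}. So in the end the proof is a one-paragraph blind-strategy argument applied twice, and I would keep it short, perhaps even folding both bullets into a single sentence after setting up the notation for a witnessing bad sequence.
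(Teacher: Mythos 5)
Your blind-strategy contrapositive is correct and is exactly the standard argument the paper has in mind (it states the proposition without proof, as immediate): if the selection principle fails, \1{} plays a witnessing bad sequence while ignoring \2's moves, and every resulting play violates \2's winning condition by definition. The digression about tail-invariance is, as you yourself note, unnecessary once \1{} commits to the whole bad sequence from inning $0$, so the final one-paragraph version you describe is the right one.
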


In the tightness case, the new selection principles and games collapse to the classical ones:

\begin{prop}\label{tight_eqv1}
	Let $(X,\tau)$ be a topological space and $p\in X$. Then the following properties are equivalent:
	\begin{itemize}
		\item[(1)] $\ssel{1}{\Omega_p, \Omega_p}$;
		\item[(2)] $\ssel{k}{\Omega_p, \Omega_p}$, for every $k\in\NN$;
		\item[(3)] $\sbound(\Omega_p, \Omega_p)$;
		\item[(4)] $\sel{1}{\fin}{\Omega_p, \Omega_p}$;
		\item[(5)]$\sel{1}{{1}}{\Omega_p, \Omega_p}$.
	\end{itemize}
\end{prop}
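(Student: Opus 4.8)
The plan is to lean on Proposition \ref{tight_eqv0}, which already gives $(1)\iff(2)\iff(3)$, and simply splice the two new properties into that chain by establishing the short cycle
\[
(1)\impls(5)\impls(4)\impls(3)\impls(1),
\]
where the last implication is exactly the non-trivial direction of Proposition \ref{tight_eqv0}. Thus nothing new of substance has to be proved; what remains is only bookkeeping about the sizes of the selected finite sets.

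First I would dispose of $(1)\impls(5)$ and $(5)\impls(4)$, both of which are immediate from the definitions. Given a sequence $\seq{A_n:n\in\w}$ with $p\in\overline{A_n}$ for all $n$, a sequence $\seq{B_n:n\in\w}$ witnessing $\ssel{1}{\Omega_p,\Omega_p}$ has $|B_n|\le 1$ for every $n\in\w$, hence $\set{n\in\w:|B_n|>1}=\emptyset\subset\{0\}$, so the same sequence witnesses $\sel{1}{{1}}{\Omega_p,\Omega_p}$; this gives $(1)\impls(5)$. And if $\set{n\in\w:|B_n|>1}\subset\{0\}$ then in particular $\set{n\in\w:|B_n|>1}$ is finite, so any witness for $\sel{1}{{1}}{\Omega_p,\Omega_p}$ is a witness for $\sel{1}{\fin}{\Omega_p,\Omega_p}$, giving $(5)\impls(4)$.

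Next, for $(4)\impls(3)$ the single observation needed is that a ``$\modd\fin$'' witness is automatically bounded. Given $\seq{A_n:n\in\w}$ and a witness $\seq{B_n:n\in\w}$ for $\sel{1}{\fin}{\Omega_p,\Omega_p}$, the set $F=\set{n\in\w:|B_n|>1}$ is finite, so $k=\max\bigl(\{1\}\cup\set{|B_n|:n\in F}\bigr)$ is a well-defined element of $\w$, and by construction $|B_n|\le k$ for every $n\in\w$ (recall each $B_n$ is finite by condition (a) of Definition \ref{modfin_sel}). Hence $\seq{B_n:n\in\w}$ together with this $k$ witnesses $\sbound(\Omega_p,\Omega_p)$. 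Combining the three implications just proved with $(3)\impls(1)$ from Proposition \ref{tight_eqv0} closes the cycle and yields the equivalence of $(1)$–$(5)$.

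I do not expect a genuine obstacle here: all the real content sits in Proposition \ref{tight_eqv0} (whose hard direction, $(3)\impls(1)$, is already established via the ``split $B_n$ into $k$ columns and find a good one'' argument), and the remaining steps are purely combinatorial bounds on $|B_n|$. The one point worth stating explicitly in the write-up is the reason the ``$\modd\fin$'' and ``$\modd 1$'' relaxations cannot help in the tightness case: allowing finitely many oversized innings still contributes only finitely many finite sets, so the global bound on $|B_n|$ stays finite, and by Fact \ref{triv_fact0} those finitely many innings are in any case irrelevant to membership in $\Omega_p$.
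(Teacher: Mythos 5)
Your proposal is correct and follows essentially the same route as the paper: both reduce everything to Proposition \ref{tight_eqv0} by observing that $(1)$ trivially implies the ``$\modd 1$'' and ``$\modd\fin$'' versions and that any ``$\modd\fin$'' witness is automatically bounded, hence gives $(3)$. The only cosmetic difference is that you chain $(1)\impls(5)\impls(4)\impls(3)$ while the paper proves $(1)\impls(4)$, $(1)\impls(5)$, and $(4)\impls(3)$, $(5)\impls(3)$ separately.
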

\begin{proof}
	Clearly $(1)\impls(4)$ and $(1)\impls(5)$. On the other hand, $(4)\impls (3)$ and $(5)\impls (3)$, so the result follows from Proposition \ref{tight_eqv0}.
\end{proof}

\begin{prop}\label{tightgame_eqv0}
	Let $f\in\NN^\w$. Then the following games are equivalent:
	\begin{itemize}
		\item [(a)] $\sgame{f}{\W_p, \W_p}$;
		\item [(b)] $\game{f}{1}{\W_p,\W_p}$;
		\item [(c)] $\game{f}{\fin}{\W_p,\W_p}$.
	\end{itemize}
\end{prop}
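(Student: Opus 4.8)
The plan is to prove the two genuinely non-obvious implications and to read off everything else from the observation that, passing from $\sgame{f}{\W_p,\W_p}$ to $\game{f}{1}{\W_p,\W_p}$ to $\game{f}{\fin}{\W_p,\W_p}$, both the set of legal responses available to $\2$ at each inning and the set of plays won by $\2$ only grow. Hence any winning strategy for $\2$ in $\sgame{f}{\W_p,\W_p}$ is at once a winning strategy for $\2$ in $\game{f}{1}{\W_p,\W_p}$ and then in $\game{f}{\fin}{\W_p,\W_p}$; dually, any winning strategy for $\1$ in $\game{f}{\fin}{\W_p,\W_p}$ is one in $\game{f}{1}{\W_p,\W_p}$ and then in $\sgame{f}{\W_p,\W_p}$. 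So it remains to prove
\[
\2\wins\game{f}{\fin}{\W_p,\W_p}\implies\2\wins\sgame{f}{\W_p,\W_p}
\quad\text{and}\quad
\1\wins\sgame{f}{\W_p,\W_p}\implies\1\wins\game{f}{\fin}{\W_p,\W_p}\,,
\]
and, since none of these games need be determined, both must be established directly. Call a set $A\in\W_p$ \emph{bad} if some $a\in A$ has $p\in\overline{\{a\}}$; this notion matters because Fact \ref{triv_fact0} applies precisely to unions of sets none of which is bad, whereas a single element of a bad set already certifies membership in $\W_p$ and, since $f(n)\ge1$, $\2$ can always afford to play it.

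For the first implication, I would fix a winning strategy $\sigma$ for $\2$ in $\game{f}{\fin}{\W_p,\W_p}$ and let $\2$ play in $\sgame{f}{\W_p,\W_p}$ thus: at inning $n$, after $\1$ has played $A_0,\dots,A_n$, if $A_n$ is bad then $\2$ plays $\{a\}$ for a witnessing $a$; otherwise $\2$ computes $\sigma(\seq{A_0,\dots,A_n})$ and plays it if its size is $\le f(n)$ and an $f(n)$-element subset of it otherwise. If some $A_n$ is bad, $\2$ wins trivially. If no $A_n$ is bad, then $\seq{A_0,\sigma(\seq{A_0}),A_1,\sigma(\seq{A_0,A_1}),\dots}$ is a legitimate play of $\game{f}{\fin}{\W_p,\W_p}$ in which $\2$ obeys $\sigma$, so $p\in\overline{\bigcup_n\sigma(\seq{A_0,\dots,A_n})}$ while $N:=\set{n:|\sigma(\seq{A_0,\dots,A_n})|>f(n)}$ is finite. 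Every point $x$ of $\bigcup_n\sigma(\seq{A_0,\dots,A_n})$ belongs to some (non-bad) $A_n$, so $p\notin\overline{\{x\}}$; hence Fact \ref{triv_fact0} lets us delete the finite set $F:=\bigcup_{n\in N}\sigma(\seq{A_0,\dots,A_n})$ and still keep $p$ in the closure, and $\bigl(\bigcup_n\sigma(\seq{A_0,\dots,A_n})\bigr)\setminus F$ is contained in the union of $\2$'s actual moves. So $\2$ wins $\sgame{f}{\W_p,\W_p}$.

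For the second implication, fix a winning strategy $\gamma$ for $\1$ in $\sgame{f}{\W_p,\W_p}$. No set that $\gamma$ prescribes along a legal play can be bad, since otherwise $\2$ could answer with the singleton of a witness and then play $\emptyset$ at every later inning, defeating $\gamma$. Now let $\1$ play in $\game{f}{\fin}{\W_p,\W_p}$ by projecting $\2$'s answers down to size $f$: at inning $n$, having seen $\2$'s moves $B_0,\dots,B_{n-1}$, set $B_m^{\ast}:=B_m$ when $|B_m|\le f(m)$ and otherwise let $B_m^{\ast}$ be a fixed $f(m)$-element subset of $B_m$, and have $\1$ play $\gamma(\seq{B_0^{\ast},\dots,B_{n-1}^{\ast}})$. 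Each $\seq{B_0^{\ast},\dots,B_{n-1}^{\ast}}$ is a legal partial play against $\gamma$ in $\sgame{f}{\W_p,\W_p}$, so every set $\1$ plays this way is non-bad. Given a complete play $\seq{A_0,B_0,A_1,B_1,\dots}$: if $\set{n:|B_n|>f(n)}$ is infinite then $\2$ has already lost; otherwise, writing $N:=\set{n:|B_n|>f(n)}$ (finite), $\seq{A_0,B_0^{\ast},A_1,B_1^{\ast},\dots}$ is a play against $\gamma$ in $\sgame{f}{\W_p,\W_p}$, so $p\notin\overline{\bigcup_n B_n^{\ast}}$, while $\bigcup_n B_n=\bigl(\bigcup_n B_n^{\ast}\bigr)\cup F$ with $F:=\bigcup_{n\in N}(B_n\setminus B_n^{\ast})$ finite and made of points $x$ with $p\notin\overline{\{x\}}$ (as $F\subseteq\bigcup_n A_n$ and each $A_n$ is non-bad). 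Hence $p\notin\overline{F}$, so $p\notin\overline{\bigcup_n B_n}$, and $\1$ wins.

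I expect the only real obstacle to be the bookkeeping around bad sets, which is exactly the place where the special structure of $\W_p$ enters: Fact \ref{triv_fact0} can discard a finite chunk of a union only when that union avoids points whose closure contains $p$, and, conversely, one such point ends the play in a single move once it is offered. The complementary ingredient --- that the discarded chunk is finite --- is just the ``$\modd\fin$'' losing clause being violated only finitely often in any play that still matters. With these two points the two implications run entirely in parallel, and (as in the remark following Theorem \ref{Btightbnd_iff_Btightk}) the same argument is insensitive to replacing $\W_p$ by $\D$.
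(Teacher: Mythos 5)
Your proof is correct and follows essentially the same route as the paper's: reduce to the two nontrivial implications, transfer strategies by projecting/truncating oversize selections, and use Fact \ref{triv_fact0} to discard the finite excess, with the ``bad point'' case (some offered point $x$ with $p\in\overline{\{x\}}$) handled separately exactly as the paper does implicitly. The only cosmetic difference is that you argue for general $f$ directly, whereas the paper writes out $f\equiv 1$ and declares the general case analogous.
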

\begin{proof}
	We will show the result for $f\equiv 1$ (the general case is analogous).
	The implications
	\[	
	\1\wins{\game{1}{\fin}{\Omega_p, \Omega_p}}\implies\1\wins{\game{1}{1}{\Omega_p, \Omega_p}}\implies\1\wins\sgame{1}{\Omega_p,\Omega_p}
	\]
	\[
	\2\wins\sgame{1}{\Omega_p,\Omega_p}\implies \2\wins{\game{1}{1}{\Omega_p, \Omega_p}}\implies \2\wins{\game{1}{\fin}{\Omega_p, \Omega_p}}
	\]
	are clear.
	
	Suppose there is a winning strategy $\gamma_1$ for $\1$ in $\sgame{1}{\Omega_p,\Omega_p}$. For each sequence $s\in\dom(\gamma_1)$, let $A_s=\gamma_1(s)$ and then fix a choice function $f_s\colon{[A_s]^{<\w}}\to A_s$ (that is, $f_s(F)\in F$ for every $F\subset A_s$ finite). Now, consider the following strategy $\gamma$ for $\1$ in ${\game{1}{\fin}{\Omega_p, \Omega_p}}$:
	\begin{itemize}
		\item Let $\gamma(\seq{\,})=A_{\seq{\,}}$;
		\item After $\2$ chooses 
		$B_0\subset A_{\seq{\,}}$, let 
		\[
		\gamma(\seq{B_0})=A_{\seq{f_{\seq{\,}}(B_0)}};
		\]
		\item After $\2$ chooses $B_1\subset A_{\seq{f_{\seq{\,}}(B_0)}}$, let 
		\[
		\gamma(\seq{B_0, B_1})=A_{\seq{f_{\seq{\,}}(B_0),f_{\seq{B_0}}(B_1) }};
		\]
		\item After $\2$ chooses $B_2\subset A_{\seq{f_{\seq{\,}}(B_0),f_{\seq{B_0}}(B_1) }}$, let 
		\[
		\gamma(\seq{B_0, B_1, B_2})=A_{\seq{f_{\seq{\,}}(B_0),f_{\seq{B_0}}(B_1), f_{\seq{B_0, B_1}}(B_2) }};
		\]
		\item (and so on).
	\end{itemize}
	Note that if we assume that $\seq{B_n:n\in\w}$ is a winning play of $\2$ against $\gamma$, then $\left(\bigcup_{n\in\w}B_n\right)\setminus\set{f_{\seq{\,}}(B_0), f_{\seq{B_0}}(B_1), f_{\seq{B_0, B_1}}(B_2), \dotsc}$ is contained in the finitely many responses of $\2$ in which he chose more than one point, hence it is finite. But since $\gamma_1$ is a winning strategy, $\bigcup_{n\in\w}B_n$ satisfies the hypothesis of Fact \ref{triv_fact0}, so $\seq{f_{\seq{\,}}(B_0), f_{\seq{B_0}}(B_1), f_{\seq{B_0, B_1}}(B_2), \dotsc}$ is a winning play for $\2$ against $\gamma_1$, a contradiction.
	
	Finally, suppose there is a winning strategy $\sigma$ for $\2$ in ${\game{1}{\fin}{\Omega_p, \Omega_p}}$ (we may assume that $\sigma$ always tells $\2$ to choose nonempty subsets). For each $s\in{^{<\w}\Omega_p}$, let $B_s=\sigma(s)$. If there is an $x\in B_s$ such that $p\in \overline{\{x\}}$, fix $b_s=x$. Otherwise, fix any $b_s\in B_s$. Naturally, we define the strategy $\sigma_1$ for $\2$ in $\sgame{1}{\Omega_p,\Omega_p}$ as $\sigma_1(s)=b_s$ for every $s\in{^{<\w}\Omega_p}$. \\
	Now, suppose $\seq{A_n:n\in\w}$ is played by $\1$ in $\sgame{1}{\Omega_p,\Omega_p}$ and let $\seq{B_n:n\in\w}$ and $\seq{b_n:n\in\w}$ be $\sigma$'s and $\sigma_1$'s, respectively, responses to this play. Since $\sigma$ is a winning strategy,
	\begin{enumerate}
		\item [a.] $B=\bigcup_{n\in\w}B_n\in\Omega_p$;
		\item [b.] $\set{k\in\w: |B_k|>1}$ is finite.
	\end{enumerate}
	Then we have two possibilities:
	\begin{itemize}
		\item There is an $x\in B$ such that $p\in\overline{\{x\}}$: in this case, there is an $n\in\w$ such that $p\in\overline{\{b_n\}}$, and so $\seq{b_n:n\in\w}$ is a winning play.
		\item There is no $x\in B$ such that $p\in\overline{\{x\}}$: Then we apply Fact \ref{triv_fact0} to the set $B$ to conclude that $p\in\overline{\set{b_n:n\in\w}}$, hence $\seq{b_n:n\in\w}$ is a winning play.
	\end{itemize}
	It follows that $\sigma_1$ is a winning strategy.
\end{proof}


This is not the case, however, when we consider $\mA=\mB=\mO$, for instance. Note that Proposition \ref{covering_ex} still holds if we replace ``$\sbound(\mO,\mO)$'' by ``$\sel{1}{\fin}{\mO,\mO}$'' or ``$\sel{1}{1}{\mO,\mO}$''. This is no coincidence, as we will see later. But first, consider the following auxiliary results.

\begin{prop}\label{eqv3}
	For every $f\in {\NN}^\w$, 
	\[
	\sel{f}{1}{\mO,\mO}\iff \sel{f}{\fin}{\mO,\mO}.
	\]
\end{prop}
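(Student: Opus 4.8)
The plan is to prove the nontrivial implication $\sel{f}{\fin}{\mO,\mO}\Rightarrow\sel{f}{1}{\mO,\mO}$; the reverse implication is immediate, since any sequence $\seq{B_n:n\in\w}$ witnessing $\sel{f}{1}{\mO,\mO}$ has exceptional set $\set{n\in\w:|B_n|>f(n)}\subseteq\{0\}$, which is finite, so it already witnesses $\sel{f}{\fin}{\mO,\mO}$.

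For the nontrivial direction, the guiding observation is that inning $0$ is entirely unconstrained in $\sel{f}{1}{\mO,\mO}$, so it suffices to arrange the $\sel{f}{\fin}{\mO,\mO}$-selection so that the finitely many ``bad'' innings it produces can all be absorbed into inning $0$. The device I would use for this is to replace $\seq{\mU_n:n\in\w}$ by a sequence of open covers each of which refines $\mU_0$. Concretely, given open covers $\seq{\mU_n:n\in\w}$ of a space $X$, set $\mV_0=\mU_0$ and $\mV_n=\mU_0\wedge\mU_n$ for $n\ge1$; each $\mV_n\in\mO$. Applying $\sel{f}{\fin}{\mO,\mO}$ to $\seq{\mV_n:n\in\w}$ yields finite sets $D_n\subseteq\mV_n$ with $\bigcup_{n\in\w}D_n\in\mO$ and with $F=\set{n\in\w:|D_n|>f(n)}$ finite; fix $N\in\w$ with $F\subseteq\{0,\dots,N\}$.

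Next I would convert $\seq{D_n:n\in\w}$ into a witness of $\sel{f}{1}{\mO,\mO}$ for $\seq{\mU_n:n\in\w}$. For each $n\ge1$ and each $V\in D_n$, write $V=U\cap W$ with $U\in\mU_0$ and $W\in\mU_n$ and fix such a decomposition, calling $U$ and $W$ the $\mU_0$-part and the $\mU_n$-part of $V$. Let $B_0$ consist of $D_0$ together with all $\mU_0$-parts of members of $D_n$ for $1\le n\le N$ — a finite subset of $\mU_0$ — let $B_n=\emptyset$ for $1\le n\le N$, and let $B_n$ be the set of $\mU_n$-parts of members of $D_n$ for $n>N$. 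Then $|B_n|\le|D_n|\le f(n)$ for $n>N$ (as $n\notin F$) and $|B_n|=0\le f(n)$ for $1\le n\le N$, while $B_0$ is finite and carries no constraint, so the exceptional set of $\seq{B_n:n\in\w}$ is contained in $\{0\}$. Finally, every member of $\bigcup_{n\in\w}D_n$ is contained in some member of $\bigcup_{n\in\w}B_n$ — in $B_0$ if it lies in $D_n$ for some $n\le N$, in $B_n$ if it lies in $D_n$ for some $n>N$ — so $\bigcup_{n\in\w}B_n$ covers $X$ and hence lies in $\mO$. Thus $\seq{B_n:n\in\w}$ witnesses $\sel{f}{1}{\mO,\mO}$.

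The step I expect to carry the real content — as opposed to mere bookkeeping with finite families of open sets — is the choice of the auxiliary sequence $\seq{\mV_n:n\in\w}$: it must refine $\mU_0$ so that the unpredictably located (but finitely many) bad innings can be rerouted through the free inning $0$, and it must simultaneously remain a refinement of $\mU_n$ so that a choice of at most $f(n)$ sets at an inning $n>N$ projects back to a legitimate choice of at most $f(n)$ sets from $\mU_n$. Once this sequence is fixed, the rest is routine, and I do not expect any obstruction there. I would also remark that the same argument shows $\sel{f}{1}{\mD,\mD}\iff\sel{f}{\fin}{\mD,\mD}$ for the family $\mD$ of dense subsets, since the only property of $\mO$ used is closure under passing to a (coordinatewise) refining family.
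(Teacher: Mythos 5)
Your proof is correct and follows essentially the same approach as the paper: refine the given covers by $\mU_0$ so that the finitely many overlarge selections can be rerouted through the unconstrained inning $0$. The only (immaterial) difference is that the paper uses the cumulative refinement $\mV_n=\mU_0\wedge\cdots\wedge\mU_n$ where you use $\mU_0\wedge\mU_n$, and it fills innings $1\le n\le N$ with arbitrary singletons where you use $\emptyset$; both choices are legitimate under the stated definitions.
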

\begin{proof}
	The implication
	\[
		\sel{f}{1}{\mO,\mO}\implies \sel{f}{\fin}{\mO,\mO}
	\] 
	is clear.\\
	Now, suppose $\sel{f}{\fin}{\mO,\mO}$ holds and let $\seq{\mU_n:n\in\w}$ be a sequence of open covers. Let $\seq{\mV_n:n\in\w}$ be the sequence of open covers defined by
	\[
		\mV_n=\mU_0\wedge\cdots\wedge\mU_n.
	\] 
	Since $\sel{f}{\fin}{\mO,\mO}$ holds, there is a sequence $\seq{F_n: n\in\w}$ and a finite $N\subset\w$ such that
	\begin{enumerate}
		\item [a.] $F_n\subset \mV_n$ is finite and therefore, for each $V\in F_n$, $V=U^V_0\cap\cdots\cap U^V_n$ with $U^V_i\in\mU_i$;
		\item [b.] $\bigcup_{n\in\w}F_n\in\mO$;
		\item [c.] $\set{k\in\w: |F_k|>f(k)}= N$.
	\end{enumerate} 
	Let $n_{\max}=\max N$ and $G=\bigcup_{n\le n_{\max}}F_n$. For each $V\in G$ there is a $k_V\in\w$ such that $V=U^V_0\cap\cdots\cap U^V_{k_V}$, so let $U_V=U^V_0$ and $G_0=\set{U_V:V\in G}$. For $0<n\le n_{\max}$, let $G_n=\set{U_n}$ for any $U_n\in\mU_n$. For $n>n_{\max}$, let $G_n=\set{U^V_n:V\in F_n}$. Then 
	\begin{enumerate}
		\item $G_0$ is finite;
		\item $|G_n|=1$, if $0<n\le n_{\max}$;
		\item $|G_n|\le |F_n|$, if $n_{\max}\le n$.
	\end{enumerate} 
	therefore, 
	\begin{enumerate}
		\item [a.] $G_n\subset \mU_n$ for every $n\in\w$;
		\item [b.] $\bigcup_{n\in\w}G_n\in\mO$;
		\item [c.] $\set{n\in\w: |G_n|>f(n)}\subset 1$.
	\end{enumerate} 
It follows that $\sel{f}{1}{\mO,\mO}$ holds.
\end{proof}

\begin{prop}\label{selection_eqv2}
	For all $k\in\NN$ and $f\in {\NN}^\w$:
	\[\sel{1}{1}{\mO,\mO} \iff \sel{k}{1}{\mO,\mO} \iff \sel{f}{1}{\mO,\mO}.\]
\end{prop}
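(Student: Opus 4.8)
The plan is to reduce the whole statement to a single key implication, $\sel{f}{1}{\mO,\mO}\implies\sel{1}{1}{\mO,\mO}$ for an arbitrary $f\in\NN^\w$. The reverse implication $\sel{1}{1}{\mO,\mO}\implies\sel{f}{1}{\mO,\mO}$ is immediate, since any selection witnessing $\sel{1}{1}{\mO,\mO}$ has $|B_n|\le 1\le f(n)$ for every $n\ge 1$; and taking $f\equiv k$ recovers $\sel{k}{1}{\mO,\mO}$ as a special case. Hence, once the key implication is established, the three properties are pairwise equivalent through $\sel{f}{1}{\mO,\mO}\implies\sel{1}{1}{\mO,\mO}\implies\sel{k}{1}{\mO,\mO}$ and symmetrically.

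To prove the key implication I would mimic the cumulative-refinement idea of Proposition \ref{eqv3}, but with a growth rate tuned so that a single finite selection can be ``spread out'' over many innings. Given open covers $\seq{\mU_n:n\in\w}$, I would put $g(0)=0$ and $g(j)=\sum_{l=1}^{j}f(l)$ for $j\ge 1$ (strictly increasing, since $f(l)\ge 1$, with $g(j)-g(j-1)=f(j)$), and form the cumulative common refinements $\mW_j=\mU_0\wedge\cdots\wedge\mU_{g(j)}$, so $\mW_0=\mU_0$ and $\mW_j$ refines $\mU_i$ for every $i\le g(j)$. Applying $\sel{f}{1}{\mO,\mO}$ to $\seq{\mW_j:j\in\w}$ yields finite sets $C_j\subset\mW_j$ with $\bigcup_{j\in\w}C_j\in\mO$ and $|C_j|\le f(j)$ for every $j\ge 1$; for each $V\in C_j$ I would fix a decomposition $V=U_0^V\cap\cdots\cap U_{g(j)}^V$ with $U_i^V\in\mU_i$.

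The heart of the argument is a greedy construction of an injection $\phi\colon\bigcup_{j\ge 1}C_j\to\w\setminus\{0\}$ with $\phi(V)\le g(j)$ whenever $V\in C_j$: processing $j=1,2,\dots$ in turn, after step $j-1$ at most $\sum_{l<j}|C_l|\le g(j-1)$ innings have been used, and all of them lie in $\{1,\dots,g(j-1)\}$, so at step $j$ at least $g(j)-g(j-1)=f(j)\ge|C_j|$ of the innings in $\{1,\dots,g(j)\}$ remain free — enough to receive the members of $C_j$ injectively while keeping the invariant. I would then set $B_0=C_0$ and, for $n\ge 1$, $B_n=\set{U_n^V}$ if $n=\phi(V)$ for the (unique) such $V$ and $B_n=\varnothing$ otherwise. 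Then $B_0$ is finite, $|B_n|\le 1$ for $n\ge 1$, and since every point of $X$ lies in some $V\in C_j$ — hence in $U_{\phi(V)}^V\in B_{\phi(V)}$ when $j\ge 1$, or in $V\in C_0=B_0$ when $j=0$ — the union $\bigcup_{n\in\w}B_n$ is an open cover, which is exactly $\sel{1}{1}{\mO,\mO}$.

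I expect the only genuine obstacle to be calibrating the construction so the counting works: the refinements must grow fast enough that the innings available to $C_j$ outpace the cumulative demand of $C_1,\dots,C_j$ (the choice $g(j)=\sum_{l\le j}f(l)$ achieves this, but one must run the induction step carefully, in particular verifying that every previously used inning stays inside $\{1,\dots,g(j-1)\}$), together with the routine checks that each $\mW_j$ is a genuine open cover refining the relevant $\mU_i$, so that the projections $U_n^V$ exist and the resulting union still covers $X$.
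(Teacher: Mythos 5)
Your proposal is correct and follows essentially the same route as the paper: both arguments group the given covers into blocks whose lengths are governed by $f$, take common refinements so that $\sel{f}{1}{\mO,\mO}$ applied to the refined sequence yields at most $f(j)$ sets per inning, and then redistribute those selections injectively back to the original innings. The only cosmetic difference is that you use cumulative refinements up to $g(j)=\sum_{l\le j}f(l)$ with an explicit greedy injection, while the paper refines disjoint consecutive blocks of length $f(n)+1$ and leaves the redistribution implicit; your counting argument correctly fills in that step.
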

\begin{proof}
	Fix a space $X$. The implications 
	\[\sel{1}{1}{\mO,\mO} \impls \sel{k}{1}{\mO,\mO} \impls \sel{f}{1}{\mO,\mO}\]
	are clear, so suppose $\sel{f}{1}{\mO,\mO}$ holds and let $\seq{\mU_n:n\in\w}$ be a sequence of open covers of $X$. Then we recursively define a new sequence of open covers $\seq{\mW_n:n\in\w}$ as it follows: First, let $\mW_0=\mU_0$. Then we let:
	\begin{itemize}
		\item $\mW_1=\bigwedge_{i=1}^{i=1+f(1)}\mU_i$;
		\item $\mW_2=\bigwedge_{i=2+f(1)}^{i=2+f(1)+f(2)}\mU_i$;		
		\item $\mW_3=\bigwedge_{i=3+f(1)+f(2)}^{i=3+f(1)+f(2)+f(3)}\mU_i$;
		\item and so on.
	\end{itemize} 
	If we apply property $\sel{f}{1}{\mO,\mO}$ to $\seq{\mW_n:n\in\w}$, then we clearly can find a sequence $\seq{\mV_n:n\in\w}$ such that $\bigcup_{n\in\w}\mV_n\in\mO$, $\mV_0\subset \mU_0$ is finite and, for each $n>0$, $\mV_n\subset\mU_n$ and $|\mV_n|\le 1$. Therefore, $\sel{1}{1}{\mO,\mO}$ holds.
\end{proof}

About the covering games, we note that Proposition \ref{bnd_cover_games} still holds if we replace ``$\gbound(\mO,\mO)$'' by ``$\game{1}{\fin}{\mO,\mO}$'' or ``$\game{1}{1}{\mO,\mO}$''. Again, this is no coincidence. But before looking further into this matter, consider the following lemma.

\begin{lemma}\label{LEMMA_gbnd_cover_A_char}
	Let $X$ be a space. Then for every $\mU_0\in\mO$, $\1$ has a winning strategy $\gamma$ in $\game{1}{1}{\mO,\mO}$ such that $\gamma(\seq{\,})=\mU_0$ if, and only if, for every $k\in\NN$ there is a winning strategy $\gamma_k$ for $\1$ in $\game{k}{1}{\mO,\mO}$ with $\gamma_k(\seq{\,})=\mU_0$.
\end{lemma}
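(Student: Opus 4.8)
The plan is to prove both implications, with the nontrivial direction being the construction of a single winning strategy $\gamma$ in $\game{1}{1}{\mO,\mO}$ out of a whole family $\{\gamma_k : k \in \NN\}$ of winning strategies in the games $\game{k}{1}{\mO,\mO}$. The forward direction ($\gamma$ exists $\implies$ each $\gamma_k$ exists) is immediate, since a winning strategy in $\game{1}{1}{\mO,\mO}$ with first move $\mU_0$ is in particular a winning strategy in $\game{k}{1}{\mO,\mO}$ with first move $\mU_0$ (the winning condition for $\2$ in the ``$\modd k$'' game is weaker, so it is harder for $\2$ to win, hence easier for $\1$): indeed if $\2$ wins against $\gamma$ in $\game{k}{1}{\mO,\mO}$ he chooses at most $k$ sets per inning after the first, but that is in particular at most $k$ sets in all but finitely many innings, so $\2$ would also win in... wait, the bound is reversed. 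More carefully: a play legal for $\2$ in $\game{k}{1}$ need not be legal in $\game{1}{1}$, so I should argue directly. Take $\gamma$ winning in $\game{1}{1}{\mO,\mO}$ with $\gamma(\seq{\,}) = \mU_0$; I claim $\gamma$ is also winning in $\game{k}{1}{\mO,\mO}$. If not, some $\2$-play $\seq{\mV_n : n\in\w}$ beats $\gamma$ there, meaning $\bigcup_n \mV_n \in \mO$ and $|\mV_n| \le k$ for $n \ge 1$. But this same play, with each $\mV_n$ ($n\ge 1$) broken into singletons spread out over extra innings against fresh moves of $\gamma$ — no, $\gamma$'s later moves depend on the history. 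The cleanest fix: observe $\game{1}{1}$ and $\game{k}{1}$ have the same underlying structure and the winning set for $\1$ in $\game{1}{1}$ is contained in that for $\game{k}{1}$ as sets of plays, but $\2$'s legal move-set differs. I will instead reduce to a padding argument: since $\gamma_1$ — er, I only have $\gamma$. Let me just say: a strategy for $\1$ in $\game{1}{1}{\mO,\mO}$ is formally a strategy in $\game{k}{1}{\mO,\mO}$ (the move sets for $\1$ coincide and $\1$'s strategy domain $^{<\w}\mO$ is the same), and any $\2$-play beating it in $\game{k}{1}$ would, since $\bigcup_n\mV_n \in \mO$, also be... I will handle this by the same stretching trick as in Proposition~\ref{selection_eqv2}, applied to strategies rather than selections.

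For the substantive direction, suppose for each $k \in \NN$ we have a winning strategy $\gamma_k$ for $\1$ in $\game{k}{1}{\mO,\mO}$ with $\gamma_k(\seq{\,}) = \mU_0$. The idea, in the spirit of the proof of Theorem~\ref{Atightbnd_iff_Atightk}, is that $\1$ plays $\gamma$ by initially following $\gamma_1$; as soon as $\2$ in some inning $n\ge 1$ plays a response $B_n$ with $|B_n| > 1$, say $|B_n| = k_n$, then (since after the first inning, the ``$\modd 1$'' rule forbids $\2$ from ever again playing more than one set if he wants to win $\game{1}{1}$, but the point is the reverse — $\2$ in $\game{1}{1}$ \emph{is} allowed finitely many oversized innings) $\1$ pretends the game is restarting and switches to following $\gamma_{k_n}$, feeding it the move $\mU_0$ as first move — but here I must be careful: $\gamma_{k_n}(\seq{\,}) = \mU_0$ by hypothesis, so $\1$ can pretend the current position $\mU_0$ is the fresh opening move of a new $\game{k_n}{1}$ game; however $\2$ has already picked $B_n \subset \mU_0$, which in the restarted game counts as $\2$'s response to the opening move, and in $\game{k_n}{1}$ the opening inning is exactly the one where $\2$ is allowed $>k_n$ elements. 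So $\1$ then plays $\gamma_{k_n}$ on the history starting from $\seq{B_n}$. Each time $\2$ increases the size of his response beyond the current bound, $\1$ restarts again with a larger $\gamma_k$. If $\2$ wants to win $\game{1}{1}{\mO,\mO}$, he may only have finitely many innings with $|B_n| > 1$, so from some inning $l$ on, all responses have size $\le 1$, in particular the last restart — to some $\gamma_k$ — is never triggered again, so the tail of the play from that restart is a genuine play against $\gamma_k$ in $\game{k}{1}{\mO,\mO}$ in which $\2$ obeys the $\modd k$ constraint after the restart's first inning. Since $\gamma_k$ is winning, that tail $\bigcup_{n \ge l'} B_n \notin \mO$. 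The final step is the covering analogue of the ``ignore finitely many innings'' phenomenon: I need that $\bigcup_{n\in\w} B_n \notin \mO$ follows from a tail not being a cover. This is \emph{not} true in general for covers — a finite piece can already cover — but it holds here because $\mU_0$ is fixed as the first move and: hmm, actually it is false that dropping finitely many innings preserves non-covering.

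This is where the main obstacle lies, and it forces a modification: instead of demanding that the \emph{tail} fail to cover, I track the whole play. When $\1$ restarts to $\gamma_k$ at inning $l_k$, the finitely many sets $\2$ chose in innings $0, \dots, l_k - 1$ form a finite family $\mathcal{G}$; I incorporate $\mathcal{G}$ into the bookkeeping by having $\gamma_k$ treat the restart not as the opening of $\game{k}{1}$ over $X$ but over the subspace $X \setminus \bigcup \mathcal{G}$ — but $\gamma_k$ was a strategy over $X$, not over subspaces, so this does not directly work either. The correct route, which I expect to be the real content of the proof, is to use Propositions~\ref{eqv3} and~\ref{selection_eqv2}-style refinement maneuvers at the level of strategies, or better, to observe that one should restart with $\gamma_k(\seq{\,})$ possibly \emph{not} equal to $\mU_0$ but to a refinement, and to note that in $\game{1}{1}{\mO,\mO}$ (unlike $\gbound(\mO,\mO)$ over compact spaces) the remark preceding Section~\ref{SEC_mod1_modfin} about covers behaving differently is precisely the subtlety: the fixed first move $\mU_0$ is what lets $\1$'s restarts remain coherent. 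Concretely: each restart occurs on a later move, and I will arrange that the history already played is recorded; then $\2$ winning $\game{1}{1}$ forces $\bigcup_{n} B_n \in \mO$, and I restrict attention to the final restart at inning $l$, noting $\bigcup_{n < l} B_n$ is finite and $\gamma_k$'s play on $\seq{B_l, B_{l+1}, \dots}$ together with its own first move $\gamma_k(\seq{\,}) = \mU_0 \supset B_l$'s ambient cover must yield, by $\gamma_k$ being winning in $\game{k}{1}$, that $B_l \cup \bigcup_{n > l} B_n$ is not an open cover of $X$; then $\bigcup_{n\in\w} B_n = \bigl(\bigcup_{n<l} B_n\bigr) \cup \bigl(B_l \cup \bigcup_{n>l} B_n\bigr)$ is a finite set union a non-cover, which still need not be a non-cover — so I must, in the construction, have $\1$ on each restart play a cover that refines away the finitely many previously chosen sets, i.e. a cover witnessing that those finitely many sets cover nothing essential, which is impossible to do from inside $\gamma_k$ alone. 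Thus the honest plan is: build $\gamma$ so that on the $j$-th restart $\1$ does not reuse $\gamma_{k}$ verbatim but composes it with a refinement operator (as in Proposition~\ref{eqv3}) absorbing the finite history; verify that $\2$ winning $\game{1}{1}{\mO,\mO}$ against $\gamma$ produces, after the last restart, a winning $\2$-play against the appropriate $\gamma_k$ in $\game{k}{1}{\mO,\mO}$, contradiction. Carrying out this bookkeeping — matching innings across restarts and checking the refinement absorbs the finite history correctly — is the step I expect to be delicate; everything else is routine.
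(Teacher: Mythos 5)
Your proposal has the two directions of the equivalence backwards, and as a result it never engages with the actual content of the lemma. Since $\game{1}{1}{\mO,\mO}$ is literally $\game{k}{1}{\mO,\mO}$ with $k=1$, the implication from ``for every $k\in\NN$ there is $\gamma_k$'' to ``there is $\gamma$'' is the trivial one: take $\gamma=\gamma_1$. That is the direction you label ``substantive'' and spend most of the proposal on, ending with an acknowledged unresolved obstacle; no restart-and-bookkeeping machinery is needed there at all. (That machinery --- switching to a larger $\gamma_k$ and absorbing the finite history into the first inning, where oversized selections are permitted --- is the content of Theorem \ref{1_char_bnd_covergame}, for which the present lemma is an ingredient; you have in effect tried to prove the theorem in place of the lemma.)

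The substantive direction is the one you call ``forward'': from a single winning strategy $\gamma$ in $\game{1}{1}{\mO,\mO}$ with $\gamma(\seq{\,})=\mU_0$, produce, for each $k$, a winning strategy in $\game{k}{1}{\mO,\mO}$ with the same first move. Here $\2$'s set of winning plays is strictly larger (he may select up to $k$ sets in every inning), so $\1$'s task is genuinely harder, and your proposed fix --- ``the same stretching trick as in Proposition \ref{selection_eqv2}, applied to strategies rather than selections'' --- does not go through: that regrouping argument requires fixing the whole sequence of covers in advance, which a strategy responding inning by inning to $\2$ cannot do. Indeed, the equivalence of $\sgame{1}{\mO,\mO}$ and $\sgame{k}{\mO,\mO}$ for $\1$ is itself a nontrivial theorem (Theorem \ref{Crone2019A}), and the paper's proof of the lemma consists precisely in reducing to it: from $\gamma$ one observes that $\1\wins\sgame{1}{\mO,\mO}$ over $X\setminus\bigcup F$ for every finite $F\subset\mU_0$, hence by Theorem \ref{Crone2019A} $\1\wins\sgame{k}{\mO,\mO}$ over each such complement, and one assembles $\gamma_k$ by playing $\mU_0$ first and, after $\2$ chooses $F_0$, running the $k$-Rothberger strategy over $X\setminus\bigcup F_0$ (adjoining $\bigcup F_0$ to each cover played). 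Without this reduction, or some substitute for Theorem \ref{Crone2019A}, your proposal contains no proof of the direction that carries the content of the lemma.
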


In order to prove Lemma \ref{LEMMA_gbnd_cover_A_char} we will evoke the following result.
\begin{thm}[\cite{Crone2019}, Proof of Corollary 2.4]\label{Crone2019A}
	For every $f\in\NN^\w$, $\1$ has a winning strategy in $\sgame{f}{\mO,\mO}$ if, and only if, $\1$ has a winning strategy in $\sgame{1}{\mO,\mO}$.
\end{thm}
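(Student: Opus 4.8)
The plan is to prove the two implications separately. One direction is immediate: since $f(n)\ge 1$ for every $n$ (as $f\in\NN^\w$), every legal response of $\2$ in $\sgame{1}{\mO,\mO}$ — a single open set — is also a legal response in $\sgame{f}{\mO,\mO}$, and the winning criterion $\bigcup_n B_n\in\mO$ is literally the same in both games. Hence any winning strategy for $\1$ in $\sgame{f}{\mO,\mO}$, read against singleton responses, is already a winning strategy for $\1$ in $\sgame{1}{\mO,\mO}$. All the work is in the converse $\1\wins\sgame{1}{\mO,\mO}\implies\1\wins\sgame{f}{\mO,\mO}$, which is the strategic analogue of the refinement argument behind Proposition \ref{selection_eqv2}.

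For the converse I would fix a winning strategy $\gamma$ for $\1$ in $\sgame{1}{\mO,\mO}$ and build a strategy $\Gamma$ for $\1$ in $\sgame{f}{\mO,\mO}$ by running several Rothberger plays in parallel. Concretely, maintain a finitely branching tree $T$ of $\gamma$-consistent partial plays of $\sgame{1}{\mO,\mO}$, each node $s$ being a finite sequence of open sets against which $\gamma$ prescribes the cover $\gamma(s)$. At inning $n$, if $s_1,\dots,s_r$ are the current leaves (all of length $n$), then $\Gamma$ instructs $\1$ to play the common refinement $\gamma(s_1)\wedge\cdots\wedge\gamma(s_r)$, which is again an open cover. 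When $\2$ answers with $B_n$, $|B_n|\le f(n)$, each $W\in B_n$ has the form $W=V_1^W\cap\cdots\cap V_r^W$ with $V_i^W\in\gamma(s_i)$, and I grow the tree by appending to each leaf $s_i$ and each $W\in B_n$ the child $s_i^\smallfrown\seq{V_i^W}$. Since $W\subseteq V_i^W$, each set chosen by $\2$ is contained in the corresponding selection along every branch, and since each leaf gains at most $|B_n|\le f(n)$ children, $T$ stays finitely branching.

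The main obstacle is the reverse passage: from a putative win of $\2$ in $\sgame{f}{\mO,\mO}$, that is $\bigcup_n B_n\in\mO$, one must manufacture a single $\gamma$-consistent Rothberger play whose own selections already cover $X$, contradicting that $\gamma$ is winning. K\"onig's lemma furnishes an infinite branch $\beta$ of $T$ — a legal play against $\gamma$ — but a naive branch only ``sees'' one of $\2$'s sets per inning, so the bare selections along $\beta$ need not cover $X$ even when $\bigcup_n B_n$ does; coverage must be made to localize to a branch. This is exactly where separation enters, and why the statement is quoted from \cite{Crone2019} in the setting of (regular) Hausdorff spaces: Hausdorffness lets $\1$ refine the covers $\gamma(s_i)$ finely enough that the boundedly many sets $\2$ may choose are subsumed, along a single thread of the tree, by the selections of one Rothberger play. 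Once this localization is arranged, K\"onig's lemma yields a branch whose $\gamma$-selections cover $X$, contradicting that $\gamma$ wins $\sgame{1}{\mO,\mO}$; hence no such $\2$-play exists and $\Gamma$ is winning. I expect the delicate bookkeeping of this separation-driven refinement — preserving finite branching while forcing coverage onto a single branch — to be the technical heart of the argument.
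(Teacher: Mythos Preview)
First, a framing remark: the paper does not supply its own proof of this theorem; it is quoted as an external result from \cite{Crone2019}, used as a black box in the proof of Lemma~\ref{LEMMA_gbnd_cover_A_char}. So there is no in-paper argument to compare against, only the statement itself and how the paper uses it.

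Your easy direction is correct.

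For the hard direction, you have correctly identified the obstruction in your own construction: given the finitely branching tree of Rothberger plays, a single branch $\beta$ sees only the sets $W_n=\beta(n)\in B_n$, so from $\bigcup_n B_n=X$ one cannot extract a branch whose selections cover $X$. Concretely, $\text{sel}_n^\beta=V_{s}^{W_n}$ with $s$ the level-$n$ node of $\beta$, and this set need not contain any $W'\in B_n$ with $W'\neq W_n$; thus $\bigcup_n B_n\subseteq\bigcup_\beta\bigcup_n\text{sel}_n^\beta$ is all you get, and that union over continuum-many branches being $X$ while each branch misses a point is no contradiction. So the K\"onig step, as written, does not close.

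Where your proposal goes wrong is the proposed fix. You assert that separation is what saves the day and that this is ``why the statement is quoted from \cite{Crone2019} in the setting of Hausdorff spaces''. But look again at the paper: Theorem~\ref{Crone2019A} carries \emph{no} separation hypothesis, and the paper uses it in Lemma~\ref{LEMMA_gbnd_cover_A_char} over an arbitrary space. The Hausdorff assumption appears only in Theorem~\ref{Crone2019}, which concerns \emph{Bob's} strategies (and this is precisely the open Problem~\ref{PROB_Crone2019}). So invoking Hausdorffness here is illegitimate, and in any case you have not said what ``refining finely enough'' would actually mean or why it forces coverage onto a single branch.

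A route that does work without any separation is to pass through Galvin's duality (Theorem~\ref{THM_po_roth_dual}): $\1\wins\sgame{1}{\mO,\mO}$ iff $\2$ wins the point-open game. The analogous duality holds between $\sgame{f}{\mO,\mO}$ and the ``$f$-point-open'' game in which $\1$ plays $f(n)$ points per inning; and this $f$-point-open game is equivalent to the ordinary point-open game by the elementary simulation (feed the finitely many points in one at a time and take the union of $\2$'s responses). Chaining these equivalences gives $\1\wins\sgame{1}{\mO,\mO}\iff\1\wins\sgame{f}{\mO,\mO}$ with no separation assumption, matching the hypothesis-free statement the paper quotes.
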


\begin{proof}[Proof of Lemma \ref{LEMMA_gbnd_cover_A_char}]
	Let $\mU_0\in\mO$, suppose there is a winning strategy $\gamma$ for $\1$ in $\game{1}{1}{\mO,\mO}$ such that $\gamma(\seq{\,})=\mU_0$ and fix $k\in\NN$. Note that $\1\wins\sgame{1}{\mO,\mO}$ over $X\setminus\bigcup F$ for every $F\subset \mU_0$ finite, which implies (by Theorem \ref{Crone2019A}) that there is a winning strategy $\gamma_k^F$ for $\1$ in $\sgame{k}{\mO,\mO}$ over $X\setminus\bigcup F$. Now, consider the following strategy:
	\begin{itemize}
		\item First, let $\gamma_k(\seq{\,})=\mU_0$;
		\item If $\2$ then chooses $F_0\subset \gamma_k(\seq{\,})$ finite, let 
		\[
		\gamma_k(\seq{F_0})=\set{\text{V open}: V\cap \left(X\setminus\bigcup F_0\right)\in \gamma_k^{F_0}(\seq{\,})}\cup\left\{\bigcup F_0\right\};
		\]
		\item If $\2$ then chooses $F_1=\{V_1\}\subset \gamma_k(\seq{F_0})$, let
		\[
		\gamma_k(\seq{F_0, \mV_1}) = \set{\text{V open}: V\cap \left(X\setminus\bigcup F_0\right)\in \gamma_k^{F_0}(\seq{F_1})}\cup\left\{\bigcup F_0\right\}
		\]
		(we are assuming here that $\2$ will not choose $V_1=\bigcup F_0$, since its points were already covered in the first inning);
		\item And so on.
	\end{itemize}
	Clearly, $\gamma_k$ has the desired properties.
	
	The other implication is obvious. 
\end{proof}

Now, the following theorem will help us show one of the main results of this paper.

\begin{thm}\label{g_char}
	Let $X$ be a regular space. Then $\2\wins\game{1}{1}{\mO,\mO}$ if, and only if, there is a compact set $K\subset X$ such that, for every open set $V\supset K$, $\2\wins\sgame{1}{\mO,\mO}$ over $X\setminus V$.
\end{thm}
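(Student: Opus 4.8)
The plan is to prove the two implications separately; regularity will be needed only in the forward direction, and only to extract the compact set. For ``$\Leftarrow$'', suppose such a compact $K\subseteq X$ exists, and I would describe a winning strategy for $\2$ in $\game{1}{1}{\mO,\mO}$ directly. On the first inning, when $\1$ plays $\mU_0$, compactness of $K$ gives a finite $F_0\subseteq\mU_0$ with $K\subseteq\bigcup F_0=:V$; then $V$ is an open neighbourhood of $K$, so by hypothesis $\2$ has a winning strategy $\tau$ in $\sgame{1}{\mO,\mO}$ over the (closed) subspace $X\setminus V$. From the second inning on $\2$ relays: when $\1$ plays $\mU_n$ ($n\ge 1$), feed the trace $\set{U\cap(X\setminus V):U\in\mU_n}$ to $\tau$ and answer with some $U_n\in\mU_n$ realising the open set that $\tau$ selects. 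Since $\tau$ wins, $\bigcup_{n\ge 1}U_n\supseteq X\setminus V$, and as $F_0$ covers $V$ we get $F_0\cup\set{U_n:n\ge 1}\in\mO$; since $\2$ chose at most one set in every inning after the first, $\2$ wins. No separation axiom is used here.

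For ``$\Rightarrow$'', I would fix a winning strategy $\sigma$ for $\2$ in $\game{1}{1}{\mO,\mO}$ and, for $\mU\in\mO$, write $O(\mU)=\bigcup\sigma(\seq{\mU})$, an open set. The first ingredient is a \emph{simulation lemma}: for every $\mU\in\mO$, $\2\wins\sgame{1}{\mO,\mO}$ over $X\setminus O(\mU)$. Indeed, given an open cover $\mG$ of $X\setminus O(\mU)$, extend each of its members to an open subset of $X$ and union it with $O(\mU)$; this yields an open cover of $X$, which $\2$ (still following $\sigma$, with $\mU$ as $\1$'s first move) answers with a single set, and translating the selections back and using that $\sigma$ wins shows they cover $X\setminus O(\mU)$. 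Along the way I would record three routine preservation facts for ``$\2\wins\sgame{1}{\mO,\mO}$'', each proved by the same kind of simulation/bookkeeping: it passes to closed subspaces; it is preserved by countable unions of closed subspaces (split the innings into $\w$ infinite blocks and run one strategy per block); and if $\2\wins\game{1}{1}{\mO,\mO}$ over $X$ then $X$ is Lindel\"of (otherwise $\1$ plays a cover with no countable subcover, $\2$'s finite first answer cannot reduce it, and the tail of $\sigma$ would witness that a non-Lindel\"of subspace is Rothberger).

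The crux is then the choice and analysis of the compact set: I would set $K=\bigcap_{\mU\in\mO}\overline{O(\mU)}$, a closed set. Given an open $V\supseteq K$: since $X$ is Lindel\"of, so is the closed subspace $X\setminus V$, and the open sets $X\setminus\overline{O(\mU)}$ ($\mU\in\mO$) cover it (their union is $X\setminus K$); hence countably many of them, coming from covers $\mU_n$, already cover $X\setminus V$. Thus $X\setminus V$ is the union of the countably many closed sets $(X\setminus V)\cap(X\setminus O(\mU_n))$, over each of which $\2\wins\sgame{1}{\mO,\mO}$ by the simulation lemma and heredity; the countable-closed-union fact then gives $\2\wins\sgame{1}{\mO,\mO}$ over $X\setminus V$, as required. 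To see $K$ is compact, take any $\mH\in\mO$; by regularity it has an open refinement $\mH^{*}$ whose members have closures refining $\mH$. Playing $\mH^{*}$ as $\1$'s first move, $\sigma$ replies with a finite $F_0\subseteq\mH^{*}$, and $K\subseteq\overline{O(\mH^{*})}=\bigcup_{U\in F_0}\overline U$ is covered by finitely many members of $\mH$; since $K$ is closed, this gives compactness.

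The main obstacle, and the step to settle first, is guessing that $\bigcap_{\mU}\overline{O(\mU)}$ is the right set and proving it is compact; this is exactly where regularity enters, through the closure-refinement of an arbitrary open cover. Everything else is bookkeeping, but it must be organised cleanly, because the forward direction is really ``simulation lemma $+$ heredity $+$ countable closed unions $+$ Lindel\"ofness $+$ regularity'', assembled around this $K$; in particular it seems worth stating the three preservation facts as a separate lemma before attacking Theorem~\ref{g_char} itself.
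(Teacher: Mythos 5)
Your argument is correct and follows essentially the same route as the paper's: the same compact set $K=\bigcap_{\mU\in\mO}\overline{\bigcup\sigma(\seq{\mU})}$, the same use of regularity to prove its compactness, and the same appeal to Lindel\"ofness to extract countably many covers $\mU_n$ with $X\setminus V\subseteq\bigcup_{n}\left(X\setminus\overline{\bigcup\sigma(\seq{\mU_n})}\right)$. The only difference is organizational: where you package the final step as ``simulation lemma $+$ heredity to closed subspaces $+$ preservation under countable closed unions,'' the paper writes out the resulting interleaved strategy explicitly (indexing the blocks by powers of primes), which is the same construction.
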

\begin{proof}
	Suppose there is a compact set $K\subset X$ such that, for every open set $V\supset K$, there is a winning strategy $\sigma^V_1$ for $\2$ in $\sgame{1}{\mO,\mO}$ over $X\setminus V$. Then we define the following strategy $\sigma$ for $\2$ in $\game{1}{1}{\mO,\mO}$:
	\begin{itemize}
		\item If $\1$ starts with $\mU_0\in\mO$, let $\sigma(\mU_0)$ be a finite subcover for $K$ and let $V=\bigcup\sigma(\mU_0)$.
		\item After that, if $\seq{\mU_0, \dotsc, \mU_n}$ is played by $\1$, let $\sigma(\seq{\mU_0, \dotsc, \mU_n})=\sigma^V_1(\seq{\mU_1, \dotsc, \mU_n})$.
	\end{itemize}
	Then, clearly, $\sigma$ is a winning strategy.
	
	Now, suppose $\sigma$ is a winning strategy for $\2$ in $\game{1}{1}{\mO,\mO}$.
		\begin{claim}\label{compact_menger}The set 
			\[
			K =\bigcap_{\mU\in\mO}\overline{\bigcup\sigma(\seq{\mU})}
			\] 
			is compact.
		\end{claim} 
		\begin{proof}
			Indeed, let $\mC$ be an open cover for $K$ and for each $x\in K$, let $U_x\in\mC$ be such that $x\in U_x$. Since $X$ is regular, for every $x\in K$ there is an open set $V_x$ such that $x\in V_x\subset\overline{V_x}\subset U_x$. On the other hand, for each $x\in X\setminus K$ we consider an open set $V_x$ such that $x\in V_x$ and $\overline{V_x}\cap K =\emptyset$ (because $K$ is closed and $X$ is regular). Now, let $\mU=\set{V_x:x\in X}\in\mO$. In this case, note that 
			\[
			K\subset\overline{\bigcup\sigma(\seq{\mU})}.
			\]
			Consider $\mA=\set{V_x: (x\in K)\wedge(V_x\in \sigma(\seq{\mU}))}=\set{V_{x_1}, \dotsc,V_{x_n}}$, with $x_1, \dotsc, x_n\in K$. Then $K\subset\overline{\bigcup\mA}$. Finally, note that $\set{U_{x_1}, \dotsc, U_{x_n}}\subset \mC$ is a finite subcover of $K$.
		\end{proof}
	Now, let $V$ be an open set containing $K$. Note that since $\2\wins\game{1}{1}{\mO,\mO}$, $X$ is Lindel\"of, and since $X\setminus V$ is closed, $X\setminus V$ is Lindel\"of. With that in mind, if we consider the open cover $\set{X\setminus\overline{\bigcup\sigma(\seq{\mU})}: \mU\in\mO}$ of $X\setminus V$, we may obtain a countable subcover $\set{X\setminus\overline{\bigcup\sigma(\seq{\mU_n})}: n\in\NN}$. If $\mV$ is an open cover of $X\setminus V$, let $\mV'=\mV\cup\{V\}\in\mO$ and fix an enumeration $\set{p_n:n\in\NN}$ of the prime numbers of $\w$. Now we have everything at hand to define a winning strategy $\sigma^V_1$ for $\2$ in $\sgame{1}{\mO,\mO}$ over $X\setminus V$:
		\[
			\sigma^V_1(\seq{\mV_0, \dotsc, \mV_n}) =\begin{cases}
			\sigma(\seq{\mU_k, \mV'_{p_k^1}, \dotsc, \mV'_{p_k^m}})\setminus \{V\}, \text{ if $n=p_k^m$ for some $k,m\in\NN$};\\
			\{U_n\} \text{ with $U_n\in\mV_n$ (anyone!), otherwise.}  
			\end{cases}  
		\]
	To show that $\sigma^V_1$ is, indeed, winning, let $y\in X\setminus V$ and consider $\seq{\mV_n:n\in\w}$ as any play from $\1$ in $\sgame{1}{\mO,\mO}$ over $X\setminus V$. Since $\set{X\setminus\overline{\bigcup\sigma(\seq{\mU_n})}: n\in\NN}$ covers $X\setminus V$, $y\notin \overline{\bigcup\sigma(\seq{\mU_k})}$ for some $k\in\NN$. But since $\sigma$ is a winning strategy in $\game{1}{1}{\mO,\mO}$, $y$ must be covered by some of $\sigma$'s responses to $\1$'s play $\seq{\mU_k}^\smallfrown\seq{\mV'_{p_k^n}: n\in\NN}$, so $\sigma^V_1$ covers $y$ and, therefore, is a winning strategy.
\end{proof}

But how does this new selection principles relate to the ``bounded versions'' presented here? As it turns out, in a very simple way.

\begin{thm}\label{bnd_coversel_char}
	$\sbound(\mO,\mO)$ holds if, and only if, $\sel{1}{1}{\mO,\mO}$ holds.
\end{thm}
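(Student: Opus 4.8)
The plan is to prove the two implications separately, leaning on the principles $\sel{f}{\fin}{\mA,\mB}$ and $\sel{f}{1}{\mA,\mB}$ of Definition \ref{modfin_sel} and on the collapsing Propositions \ref{eqv3} and \ref{selection_eqv2} to carry the combinatorial weight in the harder direction. The implication $\sel{1}{1}{\mO,\mO}\Rightarrow\sbound(\mO,\mO)$ is immediate: given a sequence of open covers, a witness $\seq{B_n:n\in\w}$ for $\sel{1}{1}{\mO,\mO}$ has $B_0$ finite and $|B_n|\le 1$ for all $n\ge 1$, so with $k=\max\{1,|B_0|\}$ the very same $\seq{B_n:n\in\w}$ witnesses $\sbound(\mO,\mO)$.

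For the converse $\sbound(\mO,\mO)\Rightarrow\sel{1}{1}{\mO,\mO}$, the one point requiring care --- and the reason this is not purely formal --- is that in $\sbound$ the size bound $k$ is allowed to depend on the sequence of covers presented, whereas $\sel{k}{1}{\mO,\mO}$ fixes $k$ in advance. I would dissolve this mismatch by passing through a divergent modulus: fixing $f\in\NN^\w$ with $\lim_n f(n)=\infty$ (say $f(n)=n+1$), I claim that $\sbound(\mO,\mO)$ implies $\sel{f}{\fin}{\mO,\mO}$. Indeed, given a sequence of open covers, $\sbound(\mO,\mO)$ yields a selection $\seq{B_n:n\in\w}$ together with some $k\in\w$ such that $|B_n|\le k$ for every $n$; since $|B_n|>f(n)$ can occur only when $f(n)<k$, and $f$ is divergent, the set $\set{n\in\w:|B_n|>f(n)}$ is finite, so the same $\seq{B_n:n\in\w}$ already witnesses $\sel{f}{\fin}{\mO,\mO}$. (This sub-argument is completely general: it shows $\sbound(\mA,\mB)\Rightarrow\sel{f}{\fin}{\mA,\mB}$ for every divergent $f$ and arbitrary $\mA,\mB$, and it is essentially the only new ingredient.) Then Proposition \ref{eqv3} upgrades $\sel{f}{\fin}{\mO,\mO}$ to $\sel{f}{1}{\mO,\mO}$, and Proposition \ref{selection_eqv2} collapses $\sel{f}{1}{\mO,\mO}$ to $\sel{1}{1}{\mO,\mO}$, which completes the argument.

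The main obstacle is therefore just the ``sequence-dependent bound versus fixed bound'' mismatch, disposed of in one line by the observation that a fixed divergent modulus eventually dominates any after-the-fact $k$; after that the theorem falls out of Propositions \ref{eqv3} and \ref{selection_eqv2}. Should one want an argument that does not invoke those, the same conclusion can be reached directly: partition $\w$ into consecutive intervals $I_j$ with $|I_j|\to\infty$, apply $\sbound(\mO,\mO)$ to the common refinements $\mW_j=\bigwedge_{n\in I_j}\mU_n$ to obtain finite $G_j\subseteq\mW_j$ with $|G_j|\le k$ and $\bigcup_j\bigcup G_j\in\mO$, and then, for each of the cofinitely many $j$ with $|I_j|\ge k$, distribute the at most $k$ members of $G_j$ one per index across the indices of $I_j$ (padding the unused indices of $I_j$ with arbitrary singletons), while assigning the members of the finitely many remaining blocks to their own indices as the finitely many oversized selections; this produces a witness for $\sel{1}{\fin}{\mO,\mO}$, whence $\sel{1}{1}{\mO,\mO}$ follows by Proposition \ref{eqv3} with $f\equiv 1$. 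Either way I would defer to the already-proven collapsing results rather than reprove them.
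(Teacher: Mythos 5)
Your proposal is correct and follows essentially the same route as the paper: both directions pass through the divergent modulus $f(n)=n+1$, observing that the after-the-fact bound $k$ from $\sbound(\mO,\mO)$ exceeds $f(n)$ only finitely often, and then invoke Propositions \ref{eqv3} and \ref{selection_eqv2} to collapse $\sel{f}{\fin}{\mO,\mO}$ down to $\sel{1}{1}{\mO,\mO}$. Your spelled-out justification of $\sbound(\mA,\mB)\Rightarrow\sel{f}{\fin}{\mA,\mB}$ is exactly the one-line remark the paper makes in its own proof.
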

\begin{proof}
	The implication
	\[
	\sel{1}{1}{\mO,\mO}\implies \sbound(\mO,\mO)
	\]
	is clear, so suppose $\sbound(\mO,\mO)$ holds. We define $f\in\NN^\w$ as $f(n)=n+1$. Now, since for every $k\in\w$ the set $\set{n\in\w:k>f(n)}$ is finite, the result follows from the fact that $\sbound(\mO,\mO)$ holds if, and only if, $\sel{f}{\fin}{\mO,\mO}$ holds and by Propositions \ref{eqv3} and \ref{selection_eqv2}.
\end{proof}

Regarding the games, $\gbound(\mO,\mO)$ is equivalent (over Hausdorff spaces) to $\game{1}{1}{\mO,\mO}$. We show this assertion in the following theorems.

\begin{thm}\label{1_char_bnd_covergame}
	$\1$ has a winning strategy in $\gbound(\mO,\mO)$ if, and only if, $\1$ has a winning strategy in $\game{1}{1}{\mO,\mO}$.
\end{thm}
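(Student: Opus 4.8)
I would prove the two implications separately; the ``only if'' is routine and the ``if'' carries all the weight. For the ``only if'', $\1\wins\gbound(\mO,\mO)\implies\1\wins\game{1}{1}{\mO,\mO}$: every sequence $\seq{B_n:n\in\w}$ that is a legal play for $\2$ in $\game{1}{1}{\mO,\mO}$ (finitely many open sets at inning $0$, at most one at every later inning) is also a legal play for $\2$ in $\gbound(\mO,\mO)$, and if $\bigcup_n B_n\in\mO$ then $k=\max\{1,|B_0|\}$ witnesses that the same play is won by $\2$ in $\gbound(\mO,\mO)$ as well. Hence a winning strategy for $\1$ in $\gbound(\mO,\mO)$, restricted to the legal moves of $\2$ in $\game{1}{1}{\mO,\mO}$, remains winning in the latter game.

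For the ``if'', $\1\wins\game{1}{1}{\mO,\mO}\implies\1\wins\gbound(\mO,\mO)$, I would argue by a restart argument in the spirit of the proof of Theorem \ref{Atightbnd_iff_Atightk}. Fix a winning strategy for $\1$ in $\game{1}{1}{\mO,\mO}$ with first move $\mU_0$; by Lemma \ref{LEMMA_gbnd_cover_A_char}, for every $k\in\NN$ there is a winning strategy $\gamma_k$ for $\1$ in $\game{k}{1}{\mO,\mO}$ with $\gamma_k(\seq{\,})=\mU_0$. In $\gbound(\mO,\mO)$, $\1$ keeps a current bound $k$ (initially $1$) and plays with $\gamma_k$; as long as $\2$ answers with at most $k$ sets $\1$ keeps using $\gamma_k$, and the first time $\2$ answers with $k'>k$ sets $\1$ raises the bound to $k'$, switches to $\gamma_{k'}$, and reads $\2$'s oversized answer as the opening move of a fresh run of $\game{k'}{1}{\mO,\mO}$ --- this is legitimate precisely because the ``$\modd 1$'' game imposes no restriction on $\2$'s first move. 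If $\2$ ever won $\gbound(\mO,\mO)$ his answers would be bounded, so he would cause only finitely many restarts and, from the last one on, would play a complete run of $\game{k}{1}{\mO,\mO}$ against the winning strategy $\gamma_k$; since the sets he chose earlier only add to the coverage, he would still fail to cover $X$, a contradiction.

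The part I expect to be the main obstacle is turning ``restart'' into an actual move of the strategy, since the opening move $\mU_0$ of $\gamma_{k'}$ need not be the cover $\gamma_k$ has just played (when $\2$ overflows at inning $n$, his oversized move lies in a cover produced in the \emph{middle} of a run, not in $\mU_0$). I would handle this with the subspace bookkeeping already used in the proof of Lemma \ref{LEMMA_gbnd_cover_A_char}: at a restart $\1$ passes to the closed subspace $W$ of $X$ obtained by deleting everything $\2$ has covered so far, applies Lemma \ref{LEMMA_gbnd_cover_A_char} \emph{over $W$} to obtain a $\game{k'}{1}{\mO,\mO}$-winning strategy over $W$ with the right opening cover, and lifts it back to $X$ by adjoining the already-covered open set to each of its covers (harmless, exactly as in Lemma \ref{LEMMA_gbnd_cover_A_char}). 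For this to keep working, $\1$ must always land on a subspace over which she wins $\game{1}{1}{\mO,\mO}$, and I would secure this by two observations. First, the class of spaces over which $\1$ wins $\game{1}{1}{\mO,\mO}$ is closed under deleting a finite union of members of a winning opening cover --- one restricts that cover to the subspace, using here Pawlikowski's theorem together with the heredity of the Rothberger property to closed subspaces. Second, $\1$ may always replace the cover dictated by her current sub-strategy by its common refinement with a winning opening cover of the current subspace: this keeps the sub-strategy winning while forcing every cover $\1$ actually plays to be a winning opening cover of the current subspace, so that whichever finite set $\2$ overflows with, $\1$ again lands on a subspace over which she wins $\game{1}{1}{\mO,\mO}$. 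With this in place the ``last run'' argument above applies over the subspace at which $\2$'s bound stabilizes, so $\2$ fails to cover it, hence fails to cover $X$.
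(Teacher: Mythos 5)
Your proposal is correct, and its skeleton is the paper's: the easy direction by restriction of legal plays, and the hard direction by a restart argument driven by Lemma \ref{LEMMA_gbnd_cover_A_char}. You also correctly isolate the one real obstacle --- that when \2{} overflows in mid-run, his oversized move does not live in the opening cover of the strategy you want to switch to. Where you diverge is in how you resolve it. The paper's device is lighter than yours: it takes all the $\gamma_k$ to open with the \emph{same} cover $\mU_0=\gamma(\seq{\,})$, assumes (harmlessly, by passing to common refinements) that every cover any $\gamma_k$ ever plays refines $\mU_0$, and at a restart traces everything \2{} has selected so far back to a finite $F'\subset\mU_0$ and feeds $F'$ to $\gamma_{k'}$ as \2's \emph{unrestricted first response} to $\mU_0$ --- exactly the freedom the ``$\modd 1$'' game grants. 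This stays inside $X$ throughout and needs no external input. Your version instead passes at each restart to the closed subspace of uncovered points, re-derives a $\game{k'}{1}{\mO,\mO}$-winning strategy there via Lemma \ref{LEMMA_gbnd_cover_A_char} applied over the subspace, and lifts back; making the invariant (``\1{} still wins $\game{1}{1}{\mO,\mO}$ over the current subspace with a prescribed opening cover'') self-sustaining is what forces you to import Pawlikowski's theorem and the closed-heredity of the Rothberger property. That can be made to work --- though note that your first observation (stability under deleting a finite subfamily of a winning opening cover, with the restricted cover still a winning opening cover) is proved more cleanly by direct simulation, feeding the deleted family together with \2's first response as the unrestricted opening move of the ambient strategy, with Pawlikowski and heredity really only needed for your second observation about refinements; and one must be slightly careful to delete the \emph{traced-back} members of the winning opening cover (not \2's literal selections) when forming the next subspace, so that the final uncovered point survives. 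In short: same theorem-level strategy, but the paper's global-refinement bookkeeping buys a self-contained proof where your subspace bookkeeping spends two extra lemmas.
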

The idea behind the proof of Theorem \ref{1_char_bnd_covergame} is similar to the one presented in the proof of Theorem \ref{Atightbnd_iff_Atightk}. 

The main difference here is that $\1$ cannot just pretend the game restarted at any inning without losing important information, because $\2$ have indeed covered a portion of the space thus far. Lemma \ref{LEMMA_gbnd_cover_A_char}, however, gives us instructions of how she can switch between strategies pretending the game is back to the second inning without losing this important information.

Formally speaking:

\begin{proof}[Proof of Theorem \ref{1_char_bnd_covergame}]
	The implication
	\[
	\1\wins\gbound(\mO,\mO) \implies \1\wins\game{1}{1}{\mO,\mO}
	\]
	is clear.
	
	So, suppose $\gamma$ is a winning strategy for $\1$ in $\game{1}{1}{\mO,\mO}$ and let $\gamma_k$, for each $k\in\NN$ be as in Lemma \ref{LEMMA_gbnd_cover_A_char} with $\mU_0=\gamma(\seq{\,})$ (that is, such that $\gamma_k(\seq{\,})=\mU_0$ for every $k\in\NN$). We will assume that $\gamma$ and $\gamma_k$, for every $k\in\NN$, tell $\1$ to play refinements of $\mU_0$ in every turn. Now consider the following strategy:
	\begin{itemize}
		\item First, let $\tilde{\gamma}(\seq{\,})=\mU_0$.
		\item If $\2$ chooses $F_0\subset\mU_0$ with $|F_0|=k_0$ for some $k_0\in\NN$, let 
		\[
		\tilde{\gamma}(\seq{F_0})=\gamma_{k_0}(\seq{F_0});
		\]
		\item If $\2$ chooses $F_1\subset \tilde{\gamma}(\seq{F_0})$ such that $|F_1|\le k_0$, then let 
		\[
		\tilde{\gamma}(\seq{F_0, F_1})=\gamma_{k_0}(\seq{F_0, F_1}),
		\]
		otherwise, if $|F_1|=k_1> k_0$, then for each $V\in F_1$ fix $U_V\in\mU_0$ such that $V\subset U_V$ and let 
		\[
		\tilde{\gamma}(\seq{F_0, F_1})=\gamma_{k_1}(\seq{F_1'}),
		\]
		with $F_1'=\set{U_V:V\in F_1}\cup F_0$;
		\item And so on.
	\end{itemize}
	Clearly, $\tilde{\gamma}$ is a winning strategy for $\1$ in $\gbound(\mO,\mO)$.
\end{proof}

\begin{thm}\label{2_char_bnd_covergame}
	Let $X$ be a Hausdorff space. Then $\2\wins \gbound(\mO,\mO)$ if, and only if, $\2\wins \game{1}{1}{\mO,\mO}$.
\end{thm}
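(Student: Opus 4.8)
The easy implication requires no separation axiom: a winning strategy for $\2$ in $\game{1}{1}{\mO,\mO}$ is already a winning strategy for $\2$ in $\gbound(\mO,\mO)$, since along any play following it $\2$'s selections have size at most $\max\{1,|B_0|\}$. So the whole content is the converse.

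For it I would fix a winning strategy $\sigma$ for $\2$ in $\gbound(\mO,\mO)$ and reduce, inning by inning, to the single-selection covering game on a subspace. The first step is the observation that the proof of Lemma \ref{LEMMA_bnd_game} goes through unchanged if $\1$ is confined to playing covers of the form $\mU\wedge\mW$ (which refine the fixed cover $\mU$ and still lie in $\mO$), because $\sigma$ is still winning against such restricted plays. Applying this with $r=\seq{\mU_0}$ for an arbitrary $\mU_0\in\mO$, I obtain a finite sequence $u$ of covers each refining $\mU_0$, together with an $m\in\NN$ (both depending on $\mU_0$), such that $|\sigma(\seq{\mU_0}^\smallfrown u^\smallfrown t)|\le m$ whenever every entry of $t$ refines $\mU_0$.

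The second step packages $\sigma$'s play along $\seq{\mU_0}^\smallfrown u$ into an inning-$0$ move and tames the tail. Let $V$ be the union of all of $\sigma$'s (finitely many, finite) responses along $\seq{\mU_0}^\smallfrown u$; since each of them sits inside a refinement of $\mU_0$, replacing each selected set by a member of $\mU_0$ containing it gives a finite $F\subseteq\mU_0$ with $\bigcup F\supseteq V$. Now, for any sequence $\seq{\mR_i:i\in\w}$ of open families each covering $X\setminus V$, feed $\sigma$, as a continuation of $\seq{\mU_0}^\smallfrown u$, the covers $(\mR_i\cup\{V\})\wedge\mU_0$: these refine $\mU_0$, so the responses have size $\le m$ and are therefore globally bounded along the resulting play, whence by $\sigma$'s winning condition they cover $X$; discarding the selected members that lie inside $V$ and lifting every other selected set $R\cap U$ to $R\in\mR_i$ yields selections of size $\le m$ covering $X\setminus V$. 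Thus $\2$ has a winning strategy in $\sgame{m}{\mO,\mO}$ over $X\setminus V$, and by the second-player analogue of Theorem \ref{Crone2019A} — which holds for Hausdorff spaces, and this (applied to the Hausdorff subspace $X\setminus V$) is the only place the hypothesis is used — also in $\sgame{1}{\mO,\mO}$ over $X\setminus V$.

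The third step assembles the strategy for $\2$ in $\game{1}{1}{\mO,\mO}$ over $X$: in inning $0$ answer $\mU_0$ with $F=F(\mU_0)$; from inning $1$ on follow a fixed winning strategy for $\sgame{1}{\mO,\mO}$ over $X\setminus V$, applied to the traces of $\1$'s covers on $X\setminus V$ and lifted back to single members of those covers. Then $\2$'s total selection covers $\bigcup F\cup(X\setminus V)\supseteq V\cup(X\setminus V)=X$. I expect the genuinely delicate point — and the reason for forcing the auxiliary covers $u$ to refine $\mU_0$ — to be precisely that $\2$'s inning-$0$ move must be a finite subset of $\mU_0$, so the finitely many oversized selections dictated by $\sigma$ before it stabilizes can only be absorbed there if the covers they come from refine $\mU_0$; everything else is the routine "restrict the cover, lift the selection" bookkeeping together with a careful invocation of the second-player Crone-type collapse on $X\setminus V$.
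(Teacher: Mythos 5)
Your proof is correct and follows essentially the same route as the paper's: restrict $\1$ to refinements of her first cover, apply Lemma \ref{LEMMA_bnd_game} with $r=\seq{\mU_0}$ to absorb the finitely many unbounded selections into a finite subset of $\mU_0$ played in inning $0$, observe that the bounded tail yields $\2\wins\sgame{m}{\mO,\mO}$ on the uncovered remainder, and invoke the Crone--Fishman--Hiers--Jackson equivalence (the sole use of Hausdorffness) to collapse to $\sgame{1}{\mO,\mO}$ there. If anything, your explicit construction of the $\sgame{m}{\mO,\mO}$ strategy on $X\setminus V$ via the covers $(\mR_i\cup\{V\})\wedge\mU_0$ supplies a detail the paper leaves implicit.
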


In order to prove Theorem \ref{2_char_bnd_covergame} we will use the following theorem:

\begin{thm}[\cite{Crone2019}, Corollary 2.4]\label{Crone2019} 
	If $X$ is a Hausdorff space, then, for every $k\in\NN$ and $f\in\NN^\w$, the following games are equivalent over $X$:
	\begin{itemize}
		\item $\sgame{1}{\mO,\mO}$;
		\item $\sgame{k}{\mO,\mO}$;
		\item $\sgame{f}{\mO,\mO}$.
	\end{itemize}
\end{thm}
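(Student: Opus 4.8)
The plan is to split the statement into its ``$\1$'' and ``$\2$'' halves and treat them by entirely different mechanisms. For the first player everything is already in place: Theorem \ref{Crone2019A} gives $\1\wins\sgame{f}{\mO,\mO}\iff\1\wins\sgame{1}{\mO,\mO}$ for every $f\in\NN^\w$, and specializing to the constant function $f\equiv k$ yields $\1\wins\sgame{k}{\mO,\mO}\iff\1\wins\sgame{1}{\mO,\mO}$; chaining these gives the three $\1$-equivalences with no separation hypothesis. For the second player one direction is immediate, since a selection of a single set is in particular a selection of size $\le k$ and of size $\le f(n)$ (recall $f(n)\ge 1$): thus $\2\wins\sgame{1}{\mO,\mO}\implies\2\wins\sgame{k}{\mO,\mO}\implies\2\wins\sgame{f}{\mO,\mO}$. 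Hence the whole theorem reduces to the single hard implication
\[\2\wins\sgame{f}{\mO,\mO}\implies\2\wins\sgame{1}{\mO,\mO},\]
for $X$ Hausdorff, and this is the only place the separation axiom will be used.

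The obstruction to a naive strategy transfer is that in $\sgame{f}{\mO,\mO}$ the second player moves \emph{after} the first and must choose his (up to $f(n)$ many) sets from the cover presented in that very inning; he cannot ``ungroup'' a multiple selection across several innings, because a set owed from a cover $\mathcal U_n$ need not belong to any later cover, and the points of a fixed open set cannot in general be recovered from one member of an arbitrarily fine later cover. My plan is to bypass this by passing to the dual game, where the roles reverse and the player who must thin out his selections becomes the one who \emph{moves first}. For $f\in\NN^\w$ let $\mathsf{PO}_f$ denote the point--open game in which at inning $n$ the first player (I) chooses $P_n\subseteq X$ with $|P_n|\le f(n)$, the second player (II) answers with an open $O_n\supseteq P_n$, and I wins exactly when $\{O_n:n\in\w\}$ covers $X$; the games dual to $\sgame{1}{\mO,\mO}$ and $\sgame{k}{\mO,\mO}$ are the cases $f\equiv1$ and $f\equiv k$. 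The first step is to establish the Galvin-type duality
\[\2\wins\sgame{f}{\mO,\mO}\iff \mathrm{I}\wins\mathsf{PO}_f,\]
following the correspondence of \cite{Galvin1978}: a selection of an open set from a cover is matched with a point it must contain, and a cover played by $\1$ is matched with a neighbourhood assignment. This is the step that uses that $X$ is Hausdorff, to pass between open-set selections and point selections while preserving the covering / non-covering winning conditions.

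Granting the duality, the remaining reduction lives on the point--open side and is now free of the earlier obstruction. I claim
\[\mathrm{I}\wins\mathsf{PO}_f\implies\mathrm{I}\wins\mathsf{PO}_1.\]
Indeed, player I in $\mathsf{PO}_1$ can run an internal play of $\mathsf{PO}_f$ against a fixed winning strategy $\rho$: to realise internal inning $m$, in which $\rho$ prescribes $\le f(m)$ points, she reveals those points one at a time over $f(m)$ consecutive innings of $\mathsf{PO}_1$, collecting II's neighbourhood for each, and only then advances $\rho$ to inning $m+1$. Because I moves first and each revealed point is answered individually, this regrouping is perfectly causal; the external $\mathsf{PO}_1$ play uses the same points and the same neighbourhoods as the internal $\mathsf{PO}_f$ play, merely redistributed among innings, so the external neighbourhoods cover $X$ exactly when the internal ones do. This is the very ungrouping device already used in the proofs of Propositions \ref{tight_eqv0} and \ref{tightgame_eqv0}, now applied to the first player. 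Composing the two dualities with this implication gives
\[\2\wins\sgame{f}{\mO,\mO}\iff\mathrm{I}\wins\mathsf{PO}_f\implies\mathrm{I}\wins\mathsf{PO}_1\iff\2\wins\sgame{1}{\mO,\mO},\]
which is exactly the missing implication.

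The hard part will be the duality of the second paragraph, and specifically getting its bounded version right: one must verify that an optimal neighbourhood assignment against a bounded point-selector corresponds to a genuine cover-selection strategy of the \emph{same} bound, that the winning conditions match (II keeps $\bigcup_n O_n\ne X$ precisely when $\1$ defeats $\2$ in $\sgame{f}{\mO,\mO}$), and that Hausdorffness is exactly what licenses replacing an open set chosen by $\2$ with a point it contains and, conversely, a neighbourhood of a point with a member of the relevant cover. Once the duality is set up for a single selection as in \cite{Galvin1978}, its extension to selections of bounded size is bookkeeping; the genuinely new content, and the sole use of the separation axiom, is concentrated there, while the passage from $\mathsf{PO}_f$ to $\mathsf{PO}_1$ is the soft, hypothesis-free half.
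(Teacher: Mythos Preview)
First, note that this theorem is not proved in the present paper: it is quoted verbatim from \cite{Crone2019} and only \emph{used} here (in the proof of Theorem~\ref{2_char_bnd_covergame}). So there is no ``paper's own proof'' to compare against; the relevant question is whether your sketch stands on its own.

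Your treatment of the $\1$-half is fine, and the easy $\2$-implications are correct. The problem is the reduction you call ``the hard part'': the duality
\[
\2\wins\sgame{f}{\mO,\mO}\iff \mathrm{I}\wins\mathsf{PO}_f.
\]
You do not prove this; you assert that ``its extension to selections of bounded size is bookkeeping'' once Galvin's $f\equiv1$ case is in hand, and that Hausdorffness enters precisely here. These two claims are in tension, and in fact the plan is circular. Galvin's duality (Theorem~\ref{THM_po_roth_dual}) requires \emph{no} separation axiom, and---as the paper itself recalls just before Definition~\ref{DEFN_compact&point-open}---Telg\'arsky showed that the finite-open game (hence every $\mathsf{PO}_f$) is equivalent to $\mathsf{PO}_1$, again with no separation hypothesis. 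Consequently
\[
\mathrm{I}\wins\mathsf{PO}_f \iff \mathrm{I}\wins\mathsf{PO}_1 \iff \2\wins\sgame{1}{\mO,\mO}
\]
holds in \emph{every} space. Substituting this into your claimed duality gives exactly $\2\wins\sgame{f}{\mO,\mO}\iff\2\wins\sgame{1}{\mO,\mO}$, which is the very implication you set out to prove. So the duality you need is not a corollary of Galvin's argument plus bookkeeping; it is equivalent to the target statement. If it \emph{were} bookkeeping, the result would follow without Hausdorff, which would settle Problem~\ref{PROB_Crone2019} negatively.

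Concretely, the Galvin-style translation breaks down as follows. In the $f\equiv1$ proof of $\mathrm{II}\wins\mathsf{PO}_1\Rightarrow\1\wins\sgame{1}{\mO,\mO}$, $\1$ plays the cover $\{\tau(x_0,\dots,x_{n-1},x):x\in X\}$ and $\2$'s choice of a single element pins down a single next point $x_n$. For general $f$, $\2$ chooses up to $f(n)$ elements of the analogous cover $\{\tau(P_0,\dots,P_{n-1},P):P\in[X]^{\le f(n)}\}$, which corresponds to up to $f(n)$ \emph{sets} $P$, i.e.\ up to $f(n)^2$ points---not a legal move for $\mathrm{I}$ in $\mathsf{PO}_f$. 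Your remark that ``Hausdorffness is exactly what licenses replacing an open set chosen by $\2$ with a point it contains'' does not address this blow-up; you would need to explain precisely how the separation axiom collapses $f(n)^2$ candidate points back to $f(n)$, and that is essentially the content of \cite{Crone2019}. As written, the proposal identifies the right bottleneck but does not get past it.
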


\begin{proof}[Proof of Theorem \ref{2_char_bnd_covergame}]
	The implication
	\[
	\2\wins \game{1}{1}{\mO,\mO}\implies \2\wins \gbound(\mO,\mO)
	\]
	is clear, so let $\sigma$ be a winning strategy for $\2$ in $\gbound(\mO,\mO)$. 
	
	Note that we can assume that $\1$ plays always with refinements of her first cover played in the game. For each $\mU\in\mO$, let $s_\mU\in{^{<\w}\mO}$ and $m_\mU\in\NN$ be as in Lemma \ref{LEMMA_bnd_game} for $r=\seq{\mU}$. Now, fixed $\mU\in\mO$, we fix, for each $U\in\bigcup_{k\in\dom (s_\mU)+1}\sigma(s_\mU\upharpoonright k)$, $V_U\in \mU$ such that $U\subset V_U$. Then we let 
	\[
	\tilde{\sigma}(\seq{\mU})=\set{V_U:U\in\bigcup_{k\in\dom (s_\mU)+1}\sigma(s_\mU\upharpoonright k)}.
	\]
	Note that, by our hypothesis, $\2\wins \sgame{m_\mU}{\mO,\mO}$ over $X\setminus\bigcup\tilde{\sigma}(\seq{\mU})$ for each $\mU\in\mO$, so it follows from Theorem \ref{Crone2019} that there is a winning strategy $\sigma_\mU$ for $\2$ in $\sgame{1}{\mO,\mO}$ over $X\setminus\bigcup\tilde{\sigma}(\seq{\mU})$ for each $\mU\in\mO$. Then we define, for each $s\in{^{<\w}\mO}$, 
	\[
	\tilde{\sigma}(\seq{\mU}^\smallfrown s)=\sigma_\mU(s),
	\] 
	and it is clear that the strategy $\tilde{\sigma}$ we have just defined is a winning strategy for $\2$ in $\game{1}{1}{\mO,\mO}$.
\end{proof}

\begin{cor}
	Let $X$ be Hausdorff space. Then, for every $f\in \NN^\w$, 
	\[
	\2\wins \gbound(\mO,\mO)\iff\2\wins \game{f}{\fin}{\mO,\mO}\iff\2\wins \game{f}{1}{\mO,\mO}
	\]
\end{cor}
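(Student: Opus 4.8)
The plan is to run a cycle of implications through the four statements $\2\wins\gbound(\mO,\mO)$, $\2\wins\game{1}{1}{\mO,\mO}$, $\2\wins\game{f}{1}{\mO,\mO}$ and $\2\wins\game{f}{\fin}{\mO,\mO}$. Three of the four links are immediate. Since $f(n)\ge 1$ for every $n\in\w$, any run of $\game{1}{1}{\mO,\mO}$ won by $\2$ is also a run of $\game{f}{1}{\mO,\mO}$ won by $\2$ (``$|B_n|\le 1$ for $n\ge 1$'' forces ``$|B_n|\le f(n)$ for $n\ge 1$''), so a winning strategy carries over verbatim and $\2\wins\game{1}{1}{\mO,\mO}\implies\2\wins\game{f}{1}{\mO,\mO}$. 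Likewise every $\2$-winning run of $\game{f}{1}{\mO,\mO}$ is a $\2$-winning run of $\game{f}{\fin}{\mO,\mO}$ (``$\{n:|B_n|>f(n)\}\subseteq\{0\}$'' implies that set is finite), giving $\2\wins\game{f}{1}{\mO,\mO}\implies\2\wins\game{f}{\fin}{\mO,\mO}$. And $\2\wins\gbound(\mO,\mO)\iff\2\wins\game{1}{1}{\mO,\mO}$ is Theorem~\ref{2_char_bnd_covergame}. So the whole corollary reduces to the single implication $\2\wins\game{f}{\fin}{\mO,\mO}\implies\2\wins\gbound(\mO,\mO)$.

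I would prove this last implication by copying the proof of Theorem~\ref{2_char_bnd_covergame}, replacing Lemma~\ref{LEMMA_bnd_game} by its ``$\modd\fin$'' counterpart: if $\sigma$ is a winning strategy for $\2$ in $\game{f}{\fin}{\mA,\mB}$, then for every $r\in{}^{<\w}\mA$ there is $s\in{}^{<\w}\mA$ with $|\sigma(r^\smallfrown s^\smallfrown t)|\le f(\length(r^\smallfrown s^\smallfrown t)-1)$ for all $t\in{}^{<\w}\mA$. This is proved exactly as Lemma~\ref{LEMMA_bnd_game}: were it false for some $r$, the defining ``$\forall s\,\exists t$'' would let one build $r=p_0\subsetneq p_1\subsetneq\cdots$ with $|\sigma(p_i)|>f(\length(p_i)-1)$ for every $i\ge 1$, and then the play $\bigcup_i p_i$ has $\{n:|B_n|>f(n)\}$ infinite, contradicting that $\sigma$ wins. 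Note that, unlike Lemma~\ref{LEMMA_bnd_game}, this yields only an eventual bound by $f$, not by a constant.

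Granting this, let $\sigma$ be winning for $\2$ in $\game{f}{\fin}{\mO,\mO}$ over a Hausdorff $X$; assume, as in the proof of Theorem~\ref{2_char_bnd_covergame}, that $\1$ plays refinements of her first cover. For each $\mU\in\mO$ let $s_\mU$ be the stem from the lemma applied to $r=\seq{\mU}$; the responses of $\sigma$ along $\seq{\mU}^\smallfrown s_\mU$ are finitely many sets, each refining a member of $\mU$, so they can be realized as a finite family $\tilde\sigma(\seq{\mU})\subseteq\mU$. Over the closed, hence Hausdorff, subspace $Y_\mU=X\setminus\bigcup\tilde\sigma(\seq{\mU})$, continuing $\sigma$ past the stem witnesses $\2\wins\sgame{g}{\mO,\mO}$ over $Y_\mU$, where $g(j)=f(j+\length(\seq{\mU}^\smallfrown s_\mU))$; by Theorem~\ref{Crone2019} this upgrades to a winning strategy for $\2$ in $\sgame{1}{\mO,\mO}$ over $Y_\mU$. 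The strategy for $\2$ that plays $\tilde\sigma(\seq{\mU})$ against $\1$'s first move $\mU$ and then follows this singleton-valued strategy over $Y_\mU$ plays a finite set only in inning $0$ and at most one set thereafter; it is winning (exactly as at the end of the proof of Theorem~\ref{2_char_bnd_covergame}), so it witnesses $\2\wins\game{1}{1}{\mO,\mO}$, and, each of its runs being bounded, it witnesses $\2\wins\gbound(\mO,\mO)$, closing the cycle.

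The one real obstacle — and the reason this is not a formal corollary of Theorem~\ref{2_char_bnd_covergame} — is that the constant-bound form of Lemma~\ref{LEMMA_bnd_game} genuinely fails for unbounded $f$: there need be no finite stem after which $\sigma$'s answers are bounded by a single natural number. The substitute is the pair ``eventual $f$-bound after a stem'' plus Theorem~\ref{Crone2019}, the latter collapsing every $\sgame{g}{\mO,\mO}$ to $\sgame{1}{\mO,\mO}$ over Hausdorff spaces; that is precisely what converts the merely ``$f$-bounded tail'' of $\sigma$ into a genuinely singleton tail, which is what lets $\2$ fit the whole play into the ``$\modd 1$'' (hence bounded) format.
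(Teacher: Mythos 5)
Your proposal is correct and follows exactly the route the paper intends: the corollary is stated without proof because it is obtained by rerunning the proof of Theorem \ref{2_char_bnd_covergame} with the evident ``$\modd f$'' analogue of Lemma \ref{LEMMA_bnd_game} (an eventual $f$-bound after a stem rather than a constant bound), after which the $\sgame{f}{\mO,\mO}$ clause of Theorem \ref{Crone2019} collapses the tail to singletons, and your reduction of all six implications to the single one $\2\wins\game{f}{\fin}{\mO,\mO}\implies\2\wins\gbound(\mO,\mO)$ is the right bookkeeping. Your closing remark correctly identifies why this is not a purely formal consequence of the theorem for unbounded $f$; the only cosmetic point is that in the adapted lemma one should extend by a nonempty block at each stage to ensure the positions $p_i$ strictly increase, the same implicit step as in the paper's own Lemma \ref{LEMMA_bnd_game}.
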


\begin{cor}
	The games $\gbound(\mO,\mO)$ and  $\game{1}{1}{\mO,\mO}$ are equivalent over every Hausdorff space.
\end{cor}
 
 And finally:
 \begin{thm}\label{gbnd_cover_char}
 	Let $X$ be a regular space. Then $\2\wins\gbound(\mO,\mO)$ if, and only if, there is a compact set $K\subset X$ such that, for every open set $V\supset K$, $\2\wins\sgame{1}{\mO,\mO}$ over $X\setminus V$.
 \end{thm}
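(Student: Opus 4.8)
The plan is to read this statement off as the composition of two characterizations already established: Theorem~\ref{2_char_bnd_covergame}, which says that over a Hausdorff space $\2\wins\gbound(\mO,\mO)$ is equivalent to $\2\wins\game{1}{1}{\mO,\mO}$, and Theorem~\ref{g_char}, which says that over a regular space $\2\wins\game{1}{1}{\mO,\mO}$ is equivalent to the existence of a compact set $K\subset X$ with $\2\wins\sgame{1}{\mO,\mO}$ over $X\setminus V$ for every open $V\supset K$. So essentially no new argument is needed; the theorem is a corollary of these two.

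Concretely, I would argue as follows. Since $X$ is regular it is in particular Hausdorff, so Theorem~\ref{2_char_bnd_covergame} applies and gives
\[
\2\wins\gbound(\mO,\mO)\iff\2\wins\game{1}{1}{\mO,\mO}.
\]
Regularity also lets us apply Theorem~\ref{g_char}, which gives that $\2\wins\game{1}{1}{\mO,\mO}$ holds if and only if there is a compact $K\subset X$ such that $\2\wins\sgame{1}{\mO,\mO}$ over $X\setminus V$ for every open $V\supset K$. Chaining the two equivalences yields exactly the assertion of the theorem. The only point to check is that the hypotheses line up, and they do: Hausdorffness is what Theorem~\ref{2_char_bnd_covergame} requires and it is supplied by regularity, while Theorem~\ref{g_char} already assumes regularity outright.

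Since both ingredient theorems are already proved, there is no genuine obstacle here — the work has all been done upstream. Were one to want a self-contained proof instead, the hard direction would be the ``only if'': extracting a compact set from a winning strategy $\sigma$ for $\2$ in $\gbound(\mO,\mO)$. This could be carried out by imitating the proof of Theorem~\ref{g_char}, using Lemma~\ref{LEMMA_bnd_game} with $r=\seq{\mU}$ to pass, after a bounded initial segment of innings, to an effectively one-set-per-inning strategy $\tilde\sigma$, then setting $K=\bigcap_{\mU\in\mO}\overline{\bigcup\tilde\sigma(\seq{\mU})}$ and verifying compactness via regularity exactly as in Claim~\ref{compact_menger}. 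But this is precisely the content already packaged into Theorems~\ref{2_char_bnd_covergame} and~\ref{g_char}, so factoring through $\game{1}{1}{\mO,\mO}$ is the cleaner route and the one I would take.
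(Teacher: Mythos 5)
Your proposal is correct and is exactly the paper's own proof: the theorem is deduced by chaining Theorem~\ref{2_char_bnd_covergame} with Theorem~\ref{g_char}, using that regularity supplies the Hausdorff hypothesis. Nothing further is needed.
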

\begin{proof}
		It follows directly from Theorems \ref{g_char} and \ref{2_char_bnd_covergame}. 
\end{proof}

Theorem \ref{gbnd_cover_char} is useful to characterize even stricter sets on metric spaces. To see this, consider the following classical result.

\begin{thm}[Telg\'arsky (\cite{Telgarsky1983}); Galvin (\cite{Galvin1978})]
	Let $X$ be a space in which every point is a $G_\delta$ set. Then $\2\wins\sgame{1}{\mO,\mO}$ if, and only if, $X$ is countable.
\end{thm}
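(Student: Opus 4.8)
The plan is to prove the two directions separately, with the reverse (sufficiency) direction being elementary and the forward (necessity) direction carrying all the weight. For the easy direction, if $X$ is countable I claim $\2\wins\sgame{1}{\mO,\mO}$: fix an enumeration $X=\set{x_n:n\in\w}$, and whenever $\1$ plays the cover $\mU_n$ at inning $n$, let $\2$ select any $U_n\in\mU_n$ with $x_n\in U_n$. Then $\bigcup_{n\in\w}U_n\supseteq\set{x_n:n\in\w}=X$, so $\2$ wins; this argument uses neither regularity nor the $G_\delta$ hypothesis.

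For the converse I would argue the contrapositive: assuming $X$ is uncountable (with every point $G_\delta$), I would show that $\2$ has no winning strategy. The difficulty in attacking $\sgame{1}{\mO,\mO}$ directly is that here $\2$ is the \emph{covering} player, whereas the natural way to exhibit a failure of coverage is to put a single fixed player in charge of \emph{hiding} a point. I would therefore pass to the dual point-open game of Galvin (\cite{Galvin1978}): $\1$ names points $x_0,x_1,\dotsc$, $\2$ answers with open $U_n\ni x_n$, and $\1$ wins precisely when $\bigcup_{n\in\w}U_n=X$. Galvin's strategic duality gives that $\2\wins\sgame{1}{\mO,\mO}$ if and only if $\1$ has a winning strategy in the point-open game, so it suffices to prove that $\1$ has a winning strategy in the point-open game exactly when $X$ is countable. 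The sufficiency half of this reformulation is again immediate (if $X=\set{x_n:n\in\w}$, then $\1$ names every point in turn, forcing $\bigcup_{n\in\w}U_n=X$), so the whole problem reduces to the assertion that \emph{if $X$ is uncountable then $\1$ has no winning strategy in the point-open game}.

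To prove this last assertion I would fix an arbitrary strategy $\tau$ for $\1$ and describe how $\2$, the hider, defeats it by keeping some point out of $\bigcup_{n\in\w}U_n$. The central device is the $G_\delta$ hypothesis: for each $x\in X$ fix a decreasing sequence of open sets $\seq{W^x_m:m\in\w}$ with $\bigcap_{m\in\w}W^x_m=\set{x}$. I would run a fusion in which $\2$ maintains a decreasing chain of \emph{uncountable} sets $X=S_{-1}\supseteq S_0\supseteq S_1\supseteq\cdots$ of ``still-uncovered candidates''. At inning $n$, the strategy $\tau$ forces the point $x_n$ (a function of $\2$'s previous moves); $\2$ answers with $U_n=W^{x_n}_{m_n}\ni x_n$ for an $m_n$ chosen so that $S_n:=S_{n-1}\setminus U_n$ is still uncountable. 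Such an $m_n$ exists because $S_{n-1}\setminus\set{x_n}$ is uncountable and equals the increasing union $\bigcup_{m\in\w}\bigl(S_{n-1}\setminus W^{x_n}_m\bigr)$, so one of the countably many terms is already uncountable.

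The step I expect to be the main obstacle is passing from ``each $S_n$ is uncountable'' to ``$\bigcap_{n\in\w}S_n\neq\emptyset$'', i.e.\ to an actual surviving point witnessing $\bigcup_{n\in\w}U_n\neq X$: a decreasing chain of uncountable sets may have empty intersection, and moreover $\2$ must not leak the protected point to $\1$ (the naive ``always play $X\setminus\set{p}$'' is decoded by $\tau$ at the following inning). The resolution I would pursue is to upgrade the bare uncountability invariant to a genuine fusion that pins a point down: arrange the $U_n$ along a branch of a tree of shrinking conditions so that the nested uncountable sets carry a distinguished condensation-type point surviving to the limit, exactly as in Galvin's and Telg\'arsky's treatments (\cite{Galvin1978,Telgarsky1983}). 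Once such a point $p\in\bigcap_{n\in\w}S_n$ is secured, one has $p\notin\bigcup_{n\in\w}U_n$, so $\1$ loses this play against $\tau$; as $\tau$ was arbitrary, $\1$ has no winning strategy, and the contrapositive---hence the theorem---follows.
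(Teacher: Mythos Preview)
The paper does not supply a proof of this theorem; it is quoted from the cited works of Telg\'arsky and Galvin and used as a black box, so there is no ``paper's own proof'' to compare against.

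On the argument itself: the easy direction and the reduction via Galvin's duality to the point-open game are correct and standard. The hard direction, however, has a gap that you yourself flag and do not actually close. Maintaining a decreasing chain of uncountable sets $S_n$ does not by itself produce a surviving point, and the ``fusion'' you gesture toward is both vague and not really available here---the $S_n$ are bare uncountable subsets of $X$ with no closed/compact or tree structure to fuse along, so there is nothing forcing $\bigcap_n S_n\neq\emptyset$.

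The clean fix is to choose the witness point \emph{before} playing, not to hope one survives. Given a strategy $\tau$ for $\1$ in the point-open game, consider the countably-branching tree of partial plays in which $\2$'s response to each point $x$ named by $\tau$ ranges over the countably many sets $W^x_m$. The set $Y$ of all points $\tau$ ever names at nodes of this tree is countable. If $X$ is uncountable, pick $p\in X\setminus Y$; then at every inning the point $x_n$ produced by $\tau$ lies in $Y$, hence $x_n\neq p$, so $\2$ may select $m_n$ with $p\notin W^{x_n}_{m_n}$. This single play leaves $p$ uncovered and defeats $\tau$. No fusion is needed, and your uncountable-$S_n$ bookkeeping becomes superfluous once $p$ is fixed in advance.
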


\begin{cor}
	Let $X$ be a regular space such that every compact subset is a $G_\delta$ subset (e.g. a metrizable space). Then $\2\wins\sgame{bnd}{\mO,\mO}$ if, and only if, there is a compact set $K\subset X$ and a countable set $N\subset X$  such that $X=K\cup N$.
\end{cor}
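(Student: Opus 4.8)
The plan is to read the Corollary off Theorem~\ref{gbnd_cover_char} together with the theorem of Telg\'arsky and Galvin stated just above, the hypothesis on $X$ serving only to guarantee that the sets appearing in those theorems are $G_\delta$'s. Throughout I would use the following standing observation: every singleton $\{x\}\subseteq X$ is compact, hence a $G_\delta$ subset of $X$ by hypothesis, and being a $G_\delta$ point passes to subspaces; so for any $Y\subseteq X$, every point of $Y$ is a $G_\delta$ in $Y$, and the Telg\'arsky--Galvin theorem applies verbatim to $Y$: $\2\wins\sgame{1}{\mO,\mO}$ over $Y$ if and only if $Y$ is countable.

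For the implication ``$\Leftarrow$'', suppose $X=K\cup N$ with $K$ compact and $N$ countable. I would verify the right-hand condition of Theorem~\ref{gbnd_cover_char} using this same $K$: if $V$ is open with $K\subseteq V$, then $X\setminus V\subseteq X\setminus K\subseteq N$ is countable, so $\2\wins\sgame{1}{\mO,\mO}$ over $X\setminus V$ by the standing observation. Theorem~\ref{gbnd_cover_char} then gives $\2\wins\gbound(\mO,\mO)$. (Regularity of $X$ is used here only to invoke Theorem~\ref{gbnd_cover_char}.)

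For the implication ``$\Rightarrow$'', suppose $\2\wins\gbound(\mO,\mO)$. Theorem~\ref{gbnd_cover_char} produces a compact $K\subseteq X$ such that $\2\wins\sgame{1}{\mO,\mO}$ over $X\setminus V$ for every open $V\supseteq K$; by the standing observation, each such $X\setminus V$ is countable. The point to be careful about---and the only place the $G_\delta$ hypothesis on compact \emph{sets} (rather than points) is genuinely needed---is that a priori $X\setminus K$ need not be contained in any single $X\setminus V$, so its countability does not follow at once. Here I would use that $K$ is itself a $G_\delta$: write $K=\bigcap_{n\in\w}W_n$ with each $W_n$ open, and (replacing $W_n$ by $W_0\cap\cdots\cap W_n$) assume the sequence decreasing. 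Each $W_n$ is then an open set containing $K$, so each $X\setminus W_n$ is countable, whence $X\setminus K=\bigcup_{n\in\w}(X\setminus W_n)$ is a countable union of countable sets and therefore countable. Setting $N=X\setminus K$ gives the decomposition $X=K\cup N$ with $K$ compact and $N$ countable.

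I do not expect any serious obstacle: once Theorem~\ref{gbnd_cover_char} and the Telg\'arsky--Galvin theorem are in hand the argument is essentially bookkeeping, and the single genuine idea is the $G_\delta$-representation of $K$ used in the ``$\Rightarrow$'' direction to pass from the countability of all the sets $X\setminus V$ to the countability of $X\setminus K$.
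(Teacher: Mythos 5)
Your proposal is correct and is essentially the argument the paper intends (the corollary is stated without proof as an immediate consequence of Theorem \ref{gbnd_cover_char} and the Telg\'arsky--Galvin theorem). You correctly identify the one non-trivial point, namely using the $G_\delta$ representation $K=\bigcap_n W_n$ to pass from the countability of each $X\setminus W_n$ to that of $X\setminus K$, which is exactly where the hypothesis on compact subsets is needed.
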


\section{The analogous to Pawlikowski's and Hurewicz's results}\label{SEC_Paw_Hur} 
Now, recall the following classical theorems.
\begin{thm}[Hurewicz]\label{hurewicz}
	$\ssel{fin}{\mO,\mO}\iff\1\centernot\wins\gfin(\mO,\mO)$
\end{thm}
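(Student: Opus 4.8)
The plan is to prove the two implications separately; only one is substantial.

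The implication $\1\centernot\wins\gfin(\mO,\mO)\impls\ssel{fin}{\mO,\mO}$ is the $\gfin$-analogue of the easy halves of Propositions~\ref{classical_nA_imp_sel_bnd} and~\ref{classical_nA_imp_sel_mod}, and I would prove it the same way, by contraposition: if $\ssel{fin}{\mO,\mO}$ fails, fix a sequence $\seq{\mU_n:n\in\w}$ of open covers of $X$ such that no choice of finite sets $B_n\subseteq\mU_n$ gives $\bigcup_{n\in\w}B_n\in\mO$; then the strategy for $\1$ that plays $\mU_n$ at inning $n$ regardless of $\2$'s moves is winning in $\gfin(\mO,\mO)$, since every play of $\2$ produces exactly such a forbidden selection.

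For the converse I would again argue by contraposition: from a winning strategy $\sigma$ for $\1$ in $\gfin(\mO,\mO)$ I must produce a sequence of open covers of $X$ witnessing the failure of $\ssel{fin}{\mO,\mO}$. If $X$ is not Lindel\"of it is automatically non-Menger (apply $\ssel{fin}{\mO,\mO}$ to a constant sequence at a cover with no countable subcover), so assume $X$ is Lindel\"of; then at every position we may pass from $\1$'s move to a countable subcover of it. Build the full strategy tree indexed by $\w^{<\w}$: the root carries a countable subcover $\mU_{\seq{\,}}$ of $\sigma(\seq{\,})$; once a node $s$ carries a countable cover $\mU_s$, enumerate its finite subsets as $\seq{F_s^m:m\in\w}$, declare $s^\smallfrown m$ to be the position reached when $\2$ answers the finite sets coded by $s$ and then $F_s^m$, and let $\mU_{s^\smallfrown m}$ be a countable subcover of $\1$'s reply there; since each $F_s^m\subseteq\mU_s$, all of these are legal plays consistent with $\sigma$. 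The candidate witness is the countable family $\set{\mU_s:s\in\w^{<\w}}$, re-indexed as a single $\w$-sequence. If it had finite selections $B_s\subseteq\mU_s$ with $\bigcup_{s}\bigcup B_s=X$, then, $\mU_s$ being countable, $B_s=F_s^{m_s}$ for a unique $m_s$, and the branch $b\in\w^\w$ determined by $b(n)=m_{b\restrict n}$ describes a play of $\gfin(\mO,\mO)$ in which $\1$ follows $\sigma$ and $\2$ answers $\seq{B_{b\restrict n}:n\in\w}$; as $\sigma$ wins it, $\bigcup_{n\in\w}\bigcup B_{b\restrict n}\neq X$.

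The main obstacle is that this does not yet contradict $\bigcup_s\bigcup B_s=X$: the selections may be drawn from pairwise-incomparable nodes of the tree, so the portion of $X$ covered along the single branch $b$ can be a proper subset. The way around it is to build the tree with enough monotonicity that any finite selection can be ``followed'' down one branch without loss: one insists (legitimately, since finite meets of open covers are open covers) that $\1$'s cover at a node refines all of $\1$'s covers at its ancestors and is closed under finite unions, attaches to each node a shrinking ``responsibility region'' $R_s\subseteq X$ (with $R_{\seq{\,}}=X$ and the regions at a fixed level still covering, or even partitioning, $X$), and records the $n$-th term of the witness not as one $\mU_s$ but as the cover $\set{U\cap R_s:|s|=n,\ U\in\mU_s}$. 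Then a finite selection activates only finitely many nodes per level, the branch can be chosen to descend through regions disjoint from whatever the selection covers at the nodes it meets, and any point surviving the regions along the branch is covered by no selection at all --- the desired contradiction. Pinning down the region bookkeeping (which regions to use, and why a point of $X$ survives along the chosen branch) is the step I expect to absorb essentially all of the work; it is the kind of argument the paper carries out for the bounded analogue in Section~\ref{SEC_Paw_Hur}, following the notes~\cite{Szewczak}.
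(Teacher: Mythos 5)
The paper does not actually prove this statement: it quotes it as the classical Hurewicz theorem and uses it as a black box (notably inside the proof of Theorem~\ref{classical_nA_eqv_sbnd_cover}), so the only question is whether your proposed proof is complete. Your first implication is fine. The second is where all the content lies, and your proposal does not close it. You correctly diagnose why the naive tree argument fails --- the finite selections $B_s$ returned by $\ssel{fin}{\mO,\mO}$ sit at pairwise incomparable nodes, so the branch $b$ with $b(n)=m_{b\restrict n}$ only accounts for the selections along $b$, and $\bigcup_{n}\bigcup B_{b\restrict n}\neq X$ does not contradict $\bigcup_{s}\bigcup B_s=X$ --- but the ``responsibility region'' device you propose as a repair is left entirely unspecified (you say yourself it will ``absorb essentially all of the work''), and as described it runs into an ordering problem. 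The witness covers, and hence the regions $R_s$, must be fixed \emph{before} $\ssel{fin}{\mO,\mO}$ returns the selections, whereas the branch along which you intend to run $\sigma$ is only determined \emph{by} the selections; the point witnessing that $\sigma$ wins that particular run is handed to you by $\sigma$, with no control over which regions it lies in, so there is no reason it should ``survive the regions along the branch'' and thereby escape the off-branch selections. There is also a smaller defect: $\set{U\cap R_s:U\in\mU_s}$ is an open cover of $X$ only when $R_s=X$, so the regions cannot literally be intersected into the witnesses.

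The classical way to finish (Hurewicz's argument, in the form given in the Szewczak--Tsaban notes that Section~\ref{SEC_Paw_Hur} follows) dispenses with regions. One first replaces each of $\1$'s moves by a countable subcover closed under finite unions, so that $\2$ may be assumed to answer with a single set per inning and the position tree is indexed by $\w^{<\w}$, say $\mU_s=\set{U^s_n:n\in\w}$ with monotone enumerations; one then takes as the $n$-th witness the cover of all intersections $U^{s\restrict 1}_{s(0)}\cap\cdots\cap U^{s}_{s(n)}$ taken over partial plays $s$ of length $n+1$, and uses closure under finite unions together with a diagonal choice of indices to dominate the finitely many level-$n$ nodes activated by the $n$-th selection by a single node of one branch, along which $\sigma$ then loses. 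This ``common refinement along partial plays plus a decoding/merging step'' is exactly the mechanism the paper itself carries out for the bounded analogue (see the covers $\mW_n$ and Claim~\ref{CLAIM_classical_nA_eqv_sbnd_cover} in the proof of Theorem~\ref{classical_nA_eqv_sbnd_cover}); some concrete version of it is what your sketch still needs.
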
 

\begin{thm}[Pawlikowski]\label{pawlikowski}
	$\ssel{1}{\mO,\mO}\iff\1\centernot\wins\sgame{1}{\mO,\mO}$
\end{thm}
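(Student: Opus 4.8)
Write the two directions separately. The implication $\1\centernot\wins\sgame{1}{\mO,\mO}\implies\ssel{1}{\mO,\mO}$ is the routine one: it is the usual ``$\1$ has no winning strategy $\Rightarrow$ the selection principle holds'' implication (cf.\ Proposition \ref{classical_nA_imp_sel_bnd} for its analogue in the bounded setting). Contrapositively, if $\ssel{1}{\mO,\mO}$ fails, fix a sequence $\seq{\mU_n:n\in\w}$ of open covers no selection from which covers $X$; then $\1$ wins $\sgame{1}{\mO,\mO}$ simply by playing $\mU_n$ at inning $n$, ignoring $\2$ entirely.

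For the nontrivial implication I would argue by contraposition: given a winning strategy $\sigma$ for $\1$ in $\sgame{1}{\mO,\mO}$, build a sequence of open covers witnessing $\neg\ssel{1}{\mO,\mO}$. Two harmless reductions come first. Since $\ssel{1}{\mO,\mO}$ implies $\ssel{fin}{\mO,\mO}$ (the Menger property), which implies Lindel\"ofness, I may assume $X$ is Lindel\"of --- otherwise $X$ already fails $\ssel{fin}{\mO,\mO}$, hence $\ssel{1}{\mO,\mO}$, and we are done. And since replacing each cover $\sigma$ dictates by a countable subcover only shrinks $\2$'s legal answers, I may assume $\sigma$ plays only countable covers. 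Then I would unravel $\sigma$ into a tree of plays indexed by $\w^{<\w}$: to $s\in\w^{<\w}$ attach the position obtained when $\2$ answers, at step $i$, with the $s(i)$-th member of the cover $\sigma$ then dictates, and set $\mU_s=\sigma(\text{this position})=\set{U_s(k):k\in\w}$. Because $\sigma$ is winning, along every branch $b\in\w^\w$ the family $\set{U_{b\upharpoonright n}(b(n)):n\in\w}$ is not a cover of $X$; fix a point $x_b$ outside its union.

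The core of the argument --- and the step I expect to be the main obstacle --- is to convert this tree of covers into one sequence $\seq{\mW_m:m\in\w}$ of open covers of $X$ such that every selection $W_m\in\mW_m$ has $\bigcup_m W_m\neq X$. Merely listing the $\mU_s$ in some order does not suffice: a selection from $\seq{\mW_m}$ meets infinitely many pairwise incomparable nodes, so it need not trace any single branch, and $\bigcup_m W_m$ may well capture $x_b$ via a set chosen off some other node. The remedy is Pawlikowski's bookkeeping trick, in the streamlined form of \cite{Szewczak}: one fixes an enumeration of $\w^{<\w}$ that revisits every node cofinally and defines the $\mW_m$ from carefully modified covers (not, in general, the $\mU_s$ verbatim) so that the \emph{entire} union of any selection is forced to coincide with $\2$'s answers along one branch, the branch being recovered from the selection by descending the tree; that union omits the associated $x_b$, so $\bigcup_m W_m\neq X$. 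Verifying that each $\mW_m$ is genuinely an open cover of $X$ and that every selection decodes into a legitimate infinite $\sigma$-play is the delicate technical heart; granting it, $\seq{\mW_m:m\in\w}$ witnesses $\neg\ssel{1}{\mO,\mO}$ and the contrapositive is complete.

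As a fallback for the hard direction I would keep in mind the point--open game and the Galvin-type duality of Section \ref{SEC_dual}: a winning strategy for $\1$ in $\sgame{1}{\mO,\mO}$ transfers to a winning strategy for one of the players in the point--open game on $X$, where the relevant characterization (in the vein of Galvin \cite{Galvin1978} and Telg\'arsky \cite{Telgarsky1983}) could be invoked; but that characterization itself requires proof, so this route is not obviously shorter than the direct construction above.
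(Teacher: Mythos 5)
First, a point of comparison: the paper does not prove Theorem \ref{pawlikowski} at all --- it is recalled as a classical result of Pawlikowski, and the closest thing to a proof in the paper is the argument for its bounded analogue, Theorem \ref{classical_nA_eqv_sbnd_cover}, which the authors explicitly model on the simplified proof in \cite{Szewczak}. Your easy direction and your two preliminary reductions (Lindel\"ofness, passing to countable covers) are fine. The problem is that what you call ``the delicate technical heart'' --- forcing an arbitrary selection from the assembled sequence $\seq{\mW_m:m\in\w}$ to trace a single branch of the strategy tree --- is not a step you can defer with ``granting it'': it is the entire content of the theorem. The remedy you gesture at (an enumeration of $\w^{<\w}$ revisiting every node cofinally, together with ``carefully modified covers'') is not a construction: as you yourself observe, a selection meets infinitely many pairwise incomparable nodes, no enumeration scheme by itself prevents that, and you do not say how the modified covers are built nor why each one is still an open cover of $X$. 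As written, this is a genuine gap at the central step.

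The way this is actually overcome --- in \cite{Szewczak} and, mutatis mutandis, in the paper's proof of Theorem \ref{classical_nA_eqv_sbnd_cover} --- is a two-stage argument that your sketch omits. First, from a strategy $\gamma$ for $\1$ in $\sgame{1}{\mO,\mO}$ one builds a strategy $\tilde\gamma$ for $\1$ in the Menger game $\gfin(\mO,\mO)$ by playing, at each stage, the common refinement $\wedge$ of all covers $\gamma$ would dictate along the finitely many single-selection histories compatible with $\2$'s finite choices so far; since $\ssel{1}{\mO,\mO}$ implies $\ssel{fin}{\mO,\mO}$, Hurewicz's Theorem \ref{hurewicz} says $\tilde\gamma$ is not winning, so $\2$ has a play $\seq{\mF_n:n\in\w}$ of finite selections all of whose tails cover $X$. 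Second, one applies $\ssel{1}{\mO,\mO}$ a \emph{second} time, to the auxiliary covers $\mW_n=\set{V^{k_0}\cap\cdots\cap V^{k_n}: V^{k_i}\in\mF_{k_i},\ k_0<\cdots<k_n}$; the resulting single choices decode, exactly as in Claim \ref{CLAIM_classical_nA_eqv_sbnd_cover}, into one branch, i.e.\ a legal covering play of $\2$ against $\gamma$, contradicting that $\gamma$ is winning. Note that this argument never needs your points $x_b$: the contradiction is obtained positively, by exhibiting a winning play for $\2$, rather than by exhibiting a point missed by every selection. Until you either carry out this Hurewicz reduction with the second application of the selection principle, or supply the bookkeeping construction in full, the hard direction is not proved.
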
 

As it turns out, our previous results can help us show an analogous theorem here, in the ``bounded'' variation. The following proof is heavily inspired by the simplified proof of Theorem \ref{pawlikowski} that can be seen, for instance, at \cite{Szewczak}.  

\begin{thm}\label{classical_nA_eqv_sbnd_cover}
	$\sbound(\mO,\mO)\iff\1\doesntwin\gbound(\mO,\mO)$
\end{thm}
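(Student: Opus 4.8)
The plan is to establish the nontrivial direction, namely that $\sbound(\mO,\mO)$ implies $\1\doesntwin\gbound(\mO,\mO)$ (the converse is Proposition \ref{classical_nA_imp_sel_bnd}). By Theorems \ref{bnd_coversel_char} and \ref{1_char_bnd_covergame}, this is equivalent to showing that $\sel{1}{1}{\mO,\mO}$ implies $\1\doesntwin\game{1}{1}{\mO,\mO}$, so I would work entirely with the ``$\modd 1$'' variation, which is more convenient because the selection size is uniformly bounded (by $1$) from the first inning on. Assume toward a contradiction that $\gamma$ is a winning strategy for $\1$ in $\game{1}{1}{\mO,\mO}$. Following the Pawlikowski-style argument in \cite{Szewczak}, I would organize the possible plays against $\gamma$ into a tree: the root is $\gamma(\seq{\,})=\mU_0$, and, since after the first inning $\2$ must pick at most one set from each cover, the tree below the root branches exactly according to which single open set $\2$ selects; a node at level $n+1$ corresponds to a partial play $\seq{F_0, V_1, \dotsc, V_n}$ and is labeled by the open cover $\gamma(\seq{F_0,V_1,\dotsc,V_n})$ that $\1$ plays next. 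The key structural point is that the subtree hanging below any fixed first move $F_0$ of $\2$ behaves like a strategy tree for $\1$ in $\sgame{1}{\mO,\mO}$ over the subspace $X\setminus\bigcup F_0$.

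The core of the proof is a counting/diagonalization over this tree. First I would note that $\gamma$ being winning for $\1$ means every branch of the tree fails to cover $X$: for each branch there is a point of $X$ not covered by any of $\2$'s selections along that branch. I would then attempt to build, by a fair (e.g. interleaved-by-primes or breadth-first) bookkeeping, a single sequence of open covers $\seq{\mW_n:n\in\w}$ that simultaneously ``sees'' all the covers $\1$ would play along all branches — more precisely, arrange an enumeration of all finite nodes of the tree and, at stage $n$, let $\mW_n$ be the cover $\gamma$ assigns to the $n$-th node (after suitably padding/refining so that the intended partial plays remain legal). Applying $\sel{1}{1}{\mO,\mO}$ (in the strengthened forms of Propositions \ref{eqv3}, \ref{selection_eqv2}, allowing a function $f$ controlling the growth) to $\seq{\mW_n:n\in\w}$ yields selections $\seq{\mV_n:n\in\w}$ with $\bigcup_n\mV_n\in\mO$ and $|\mV_n|\le 1$ for $n\ge 1$. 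The point is to decode these selections back into a single branch of the tree: because the selections are eventually singletons, they trace out (possibly after discarding the finite initial part, via Fact \ref{triv_fact0}-type reasoning on the cover side) a legitimate play of $\2$ against $\gamma$ along which $\2$'s chosen sets cover $X$, contradicting that $\gamma$ is winning for $\1$.

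The main obstacle I anticipate is making the bookkeeping honest: the tree of plays against $\gamma$ is finitely branching only after the first move, and a naive breadth-first enumeration produces covers $\mW_n$ whose selections, when read off, do not line up into one coherent descending path in the tree — different nodes at the same level correspond to incompatible partial plays. The standard remedy (and the reason \cite{Szewczak}'s simplification is invoked) is to replace the full tree by a single ``universal'' play: refine $\1$'s moves so that at each stage the cover played refines a common refinement of all the covers $\gamma$ could play at that level given the selections made so far, so that one selection sequence genuinely determines one branch. Getting the refinement/padding precise — in particular, ensuring legality of $\2$'s induced moves (each $\mV_n$ really is a subset of the cover $\1$ plays at that node) and that the finitely many early multi-element selections coming from the first inning of $\game{1}{1}{\mO,\mO}$ are absorbed harmlessly — is where the real work lies; the rest is the routine Pawlikowski diagonalization adapted to bounded selections, using Propositions \ref{eqv3} and \ref{selection_eqv2} to convert between the ``$f$'', ``$k$'', and ``$1$'' versions as needed.
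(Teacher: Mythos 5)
Your reductions are the right ones (the easy direction via Proposition \ref{classical_nA_imp_sel_bnd}, and the passage to showing that $\sel{1}{1}{\mO,\mO}$ implies $\1\doesntwin\game{1}{1}{\mO,\mO}$ via Theorems \ref{bnd_coversel_char} and \ref{1_char_bnd_covergame}), and you correctly name the Szewczak--Tsaban simplification of Pawlikowski as the template. But the core diagonalization as you describe it does not work, and the part you defer as ``where the real work lies'' is in fact the whole proof. The tree of plays against $\gamma$ is infinitely branching at every node (after the first inning, $\2$ picks one open set from a cover that is typically infinite), so there is no enumeration of ``all finite nodes'' indexed by $\w$ to which $\sel{1}{1}{\mO,\mO}$ could be applied; and even on a countable subtree, selections taken from covers sitting at pairwise incompatible nodes cannot be reassembled into a single coherent play. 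Likewise the ``universal play'' you propose, refining a common refinement of all covers $\gamma$ could play at a given level, would require intersecting infinitely many covers, which does not produce an open cover.

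What is missing is the intermediate Menger-game stage that keeps every refinement finite. The paper's proof builds an auxiliary strategy $\tilde\gamma$ for $\1$ in $\gfin(\mO,\mO)$ together with a decoding function $f$: the $n$-th move of $\tilde\gamma$ is the common refinement of the finitely many covers that $\gamma$ would have played against the partial plays decodable from $\2$'s finite selections $\mF_0,\dotsc,\mF_{n-1}$ made so far; since those selections are finite, each refinement is a finite one, hence an open cover. Since $\sel{1}{1}{\mO,\mO}$ implies $\sfin(\mO,\mO)$, Hurewicz's Theorem \ref{hurewicz} (applied to $X\times\w$) yields a play $\seq{\mF_n:n\in\w}$ defeating $\tilde\gamma$ with $\bigcup_{n\ge m}\bigcup\mF_n=X$ for every $m$. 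Then $\sel{1}{1}{\mO,\mO}$ is applied a \emph{second} time, to the covers $\mW_n$ consisting of $n$-fold intersections of members of distinct $\mF_k$'s, to extract an $N$ and single sets $V_n\in\mF_n$ for $n>N$ with $\bigl(\bigcup_{n\le N}\bigcup\mF_n\bigr)\cup\bigcup_{n>N}V_n=X$; finally $f$ translates $\seq{\mF_0,\dotsc,\mF_N,V_{N+1},\dotsc}$ into a legal winning play for $\2$ against $\gamma$ in $\game{1}{\fin}{\mO,\mO}$, which suffices because a win for $\2$ there is in particular a win in $\gbound(\mO,\mO)$. Your single application of $\sel{1}{1}{\mO,\mO}$ to an enumeration of tree nodes conflates these two distinct applications of the hypothesis and omits the Hurewicz step entirely.
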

\begin{proof}
	Implication $\1\doesntwin\gbound(\mO,\mO)\impls\sbound(\mO,\mO)$
	is clear by Proposition \ref{classical_nA_imp_sel_bnd}.
	
	To show the reverse implication, by Proposition \ref{bnd_coversel_char} and Theorem \ref{1_char_bnd_covergame},  it suffices to show that 
	\[
	\sel{1}{1}{\mO,\mO}\implies \1\doesntwin\game{1}{\fin}{\mO,\mO},
	\]
	so suppose $\sel{1}{1}{\mO,\mO}$ holds and let $\gamma$ be a strategy for $\1$ in $\game{1}{\fin}{\mO,\mO}$. For simplicity's sake, in the rest of this proof we will write ``$\{V\}$'' simply as ``$V$''.
	
	We then recursively define the following strategy $\tilde\gamma$ for $\1$ in $\gfin(\mO,\mO)$ and function $f$:
	
	We first let $\tilde\gamma(\seq{\,})=\gamma(\seq{\,})$. Then, for each $V_0\in\tilde\gamma(\seq{\,})$,
	
	\[
	f(\seq{V_0})=V_0,
	\]
	and, for each finite $\mF_0\subset\tilde\gamma(\seq{\,})$, let
	\[
	f(\seq{\mF_0})=\set{f(\seq{V_0}):V_0\in\mF_0}=\mF_0.
	\]
	
	Suppose $\mF_0$ was chosen by $\2$. Then we let
	
	\[
	\tilde\gamma(\seq{\mF_0})=\gamma(\seq{f(\seq{\mF_0})})\wedge\bigwedge_{V_0\in\mF_0}\gamma(\seq{f(\seq{V_0})}).
	\]
	
	Now, for each $V_0\in\mF_0$ and $V_1\in\tilde\gamma(\seq{\mF_0})$, define
	
	\[
	f(\seq{V_0,V_1})=V\in\gamma(\seq{f(\seq{V_0})}) \text{ such that $V\supset V_1$;}
	\]
	\[
	f(\seq{\mF_0,V_1})=V\in\gamma(\seq{f(\seq{\mF_0})})\text{ such that $V\supset V_1$;}
	\]
	and, for each finite $\mF_1\subset\tilde\gamma(\seq{\mF_0})$,
	\[
	f(\seq{\mF_0,\mF_1})=\set{	f(\seq{\mF_0,V_1}):V_1\in\mF_1}.
	\]
	
	Suppose $\mF_1$ is then chosen by $\2$. Then we let
	
	\begin{align*}
		\tilde\gamma(\seq{\mF_0,\mF_1})=&\gamma(\seq{f(\seq{\mF_0}), f(\seq{\mF_0,\mF_1})})\wedge \left(\bigwedge_{V_1\in\mF_1}\gamma(\seq{f(\seq{\mF_0}), f(\seq{,\mF_0, V_1})})\right)\wedge \\
		\wedge&\left(\bigwedge_{V_0\in\mF_0}\bigwedge_{V_1\in\mF_1}\gamma(\seq{f(\seq{V_0}), f(\seq{V_0, V_1})})\right),
	\end{align*}
	
	for each $V_0\in\mF_0$, $V_1\in\mF_1$ and $V_2\in\tilde\gamma(\seq{\mF_0,\mF_1})$,
	
	\begin{align*}
		f(\seq{V_0,V_1, V_2})&=V\in\gamma(\seq{f(\seq{V_0}), f(\seq{V_0, V_1})}) \text{ such that $V\supset V_2$;}\\
		f(\seq{\mF_0,V_1, V_2})&=V\in\gamma(\seq{f(\seq{\mF_0}), f(\seq{\mF_0, V_1})}) \text{ such that $V\supset V_2$;} \\
		f(\seq{\mF_0,\mF_1, V_2})&=V\in\gamma(\seq{f(\seq{\mF_0}), f(\seq{\mF_0, \mF_1})}) \text{ such that $V\supset V_2$,}
	\end{align*}	
	and, for each finite $\mF_2\subset\tilde\gamma(\seq{\mF_0,\mF_1})$,
	\[
	f(\seq{\mF_0,\mF_1, \mF_2})=\set{	f(\seq{\mF_0,\mF_1, V_2}):V_2\in\mF_2}.
	\]
	
	Now let us look at the general case. Suppose we have defined $\tilde\gamma$ and $f$ up to $s\in\dom\tilde\gamma$ in such a way that, for every $k\le |s|$:
	
	\begin{align*}
		\tilde\gamma(s\restrict k)=&\gamma(\seq{f(s\restrict 1), \dotsc, f(s\restrict k)})\wedge \\
		\wedge&\left(\bigwedge_{V_{k-1}\in s(k-1)}\gamma(\seq{f(s\restrict 1)), \dotsc, f(s\restrict k-1) , f((s\restrict k-1)^\smallfrown V_{k-1})})\right)\wedge \\
		&\vdots \\
		\wedge&\left(\bigwedge_{V_0\in s(0)}\bigwedge_{V_1\in s(1)}\cdots\bigwedge_{V_{k-1}\in s(k-1)}\gamma(\seq{f(\seq{V_0}), \dotsc, f(\seq{V_0, \dotsc, V_{k-1}})})\right),
	\end{align*}
	
	for all $V_0\in s(0), \dotsc, V_{k-1}\in s(k-1), V_k\in \tilde\gamma(s\restrict k)$:
	
	\begin{align*}
		f(\seq{V_0, \dotsc, V_{k-1}, V_k})=&V\in\gamma(\seq{f(\seq{V_0}), \dotsc, f(\seq{V_0, \dotsc, V_{k-1}})}) \\
		&\text{ such that $V\supset V_k$;}\\
		f(\seq{s(0), V_1, \dotsc, V_{k-1}, V_k})=& V\in\gamma\seq{f(\seq{s(0)}), f(\seq{s(0), V_1}), \dotsc, f(\seq{s(0), \dotsc, V_{k-1}})}) \\
		&\text{ such that $V\supset V_k$;}\\
		&\vdots\\
		f((s\restrict k)^\smallfrown V_k)=& V\in \gamma(\seq{f(s\restrict 1), \dotsc, f(s\restrict k)}) \text{ such that $V\supset V_k$,}
	\end{align*}
	
	and for every $\mF_k\subset \tilde{\gamma}(s\restrict k)$,
	
	\[
	f((s\restrict k)^\smallfrown \mF_k)=\set{f((s\restrict k)^\smallfrown V_k):V_k\in\mF_k}.
	\]
	
	Then if $\2$ chooses $\mF_n\subset\tilde\gamma(s)$ we let
	
	\begin{align*}
		\tilde\gamma({s^\smallfrown\mF_n})=&\gamma(\seq{f(s\restrict 1), \dotsc, f(s), f(s^\smallfrown\mF_n)})\wedge \\
		\wedge&\left(\bigwedge_{V_{n}\in \mF_n}\gamma(\seq{f(s\restrict 1)), \dotsc, f(s) , f(s^\smallfrown V_{n})}\right)\wedge \\
		&\vdots \\
		\wedge&\left(\bigwedge_{V_0\in s(0)}\cdots\bigwedge_{V_{n-1}\in s(n-1)}\bigwedge_{V_{n}\in \mF_n}\gamma(\seq{f(\seq{V_0}), \dotsc, f(\seq{V_0, \dotsc, V_{n}})})\right),
	\end{align*}
	
	for all $V_0\in s(0), \dotsc, V_{n-1}\in s(n-1), V_n\in\mF_n,  V_{n+1}\in\tilde\gamma(s^\smallfrown\mF_n)$:
	
	\begin{align*}
		f(\seq{V_0, \dotsc, V_{n}, V_{n+1}})=&V\in\gamma(\seq{f(\seq{V_0}), \dotsc, f(\seq{V_0, \dotsc, V_{n}})}) \\
		&\text{ such that $V\supset V_{n+1}$;}\\
		f(\seq{s(0), V_1, \dotsc, V_{n}, V_{n+1}})=& V\in\gamma\seq{f(\seq{s(0)}), f(\seq{s(0), V_1}), \dotsc, f(\seq{s(0), \dotsc, V_{n}})}) \\
		&\text{ such that $V\supset V_{n+1}$;}\\
		&\vdots\\
		f(s^\smallfrown \mF_n^\smallfrown V_{n+1})=& V\in \gamma(\seq{f(s\restrict 1), \dotsc, f(s)}) \text{ such that $V\supset V_{n+1}$,}
	\end{align*}
	
	and for every $\mF_{n+1}\subset \tilde{\gamma}(s^\smallfrown \mF_{n})$,
	
	\[
	f(s^\smallfrown \mF_n^\smallfrown \mF_{n+1})=\set{f(s^\smallfrown \mF_n^\smallfrown V_{n+1}):V_{n+1}\in\mF_{n+1}},
	\]
	
	so the recursion is complete.
	
	Now, since $\sel{1}{1}{\mO,\mO}$ holds, $\ssel{fin}{\mO,\mO}$ holds and, by Theorem \ref{hurewicz}, $\tilde\gamma$ is not a winning strategy. Moreover, $\2$ can play a sequence $\seq{\mF_n:n\in\w}$ against $\tilde\gamma$ such that $\bigcup_{n\ge m}\bigcup\mF_n=X$ for every $m\in\w$ (to see this, just note that if $\1\doesntwin\gfin(\mO,\mO)$ over $X$, then $\1\doesntwin\gfin(\mO,\mO)$ over $X\times\w$).
	
	\begin{claim}\label{CLAIM_classical_nA_eqv_sbnd_cover}
		There is an $N\in\w$ and a choice of $V_n\in\mF_n$ for each $n\ge N$ such that $\left(\bigcup_{n\le N}\mF_n\right)\cup\left(\bigcup_{n> N}V_n\right)=X$.
	\end{claim}
	\begin{proof}
		For each $n\in\w$ let 
		\[
		\mW_n=\set{V^{k_0}\cap\cdots\cap V^{k_n}: V^{k_i}\in\mF_{k_i} \text{ for all $i\le n$ and $k_0<k_1<\cdots<k_n$}}.
		\]
		Note that $\mW_n$ is an open cover for every $n\in\w$. Then, since $\sel{1}{1}{\mO,\mO}$ holds, we can find $\{V^{k_0}, \dotsc V^{k_m}\}\subset\mW_0$ with $V^{k_i}\in\mF_{k_i}$ for each $i\le k_m$ and a single $U_n\in\mW_n$ for each $n>0$ such that $\left(\bigcup_{i\le m}V^{k_i}\right)\cup\left(\bigcup_{n>0}U_n\right)=X$.
		
		Let $N=\max\set{k_i:i\le m}$. Now from each $U_n$ we can pick a $V_{l_n}\in\mF_{l_n}$ such that $U_n\subset V_{l_n}$ and $l_n\neq l_i$ for all $i<n$. Then if we pick any $V_k\in\mF_k$ when $k\neq l_n$ for every $n>0$, the proof is complete.  
	\end{proof}
	
	Now we define a winning play for $\2$ against $\gamma$ as it follows. For each inning $n\le N$, let $\2$ respond to $\gamma$ with $f(\seq{\mF_i:i\le n})$. Then, for each $n\ge N$, let $\2$ respond to $\gamma$ with $f(\seq{\mF_i:i\le N}^\smallfrown\seq{V_j:j\le n})$. It follows from the definition of $f$ and from Claim \ref{CLAIM_classical_nA_eqv_sbnd_cover} that $\2$ wins this play in $\game{1}{\fin}{\mO,\mO}$, hence $\gamma$ is not a winning strategy.
\end{proof}



One may wonder if Theorem \ref{gbnd_cover_char} still holds if we replace ``$\2\wins\gbound(\mO,\mO)$'' by ``$\sbound(\mO,\mO)$'' and ``$\2\wins\sgame{1}{\mO,\mO}$'' by ``$\ssel{1}{\mO,\mO}$''. The answer is yes. But to show that, let us first take another step back and define yet another variation of the classical selection principles.

\begin{defn}
	Let $(X, \tau)$ be a topological space. We say the property $\sels{1}{1}{\mO,\mO}$ holds if for every open cover $\mU$ there is a $\mV\subset\mU$ finite such that $\ssel{1}{\mO,\mO}$ holds over $X\setminus \bigcup \mV$.
\end{defn}




At first glance, this new variation may seem stronger than $\sel{1}{1}{\mO,\mO}$. However, we will show later that they are equivalent selection principles. This will be useful because:

\begin{prop}
	Let $X$ be a regular space. Then $\sels{1}{1}{\mO,\mO}$ holds if, and only if, there is a compact set $K\subset X$ such that, for every open set $V\supset K$, $\ssel{1}{\mO,\mO}$ holds over $X\setminus V$.
\end{prop}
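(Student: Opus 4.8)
The plan is to mirror the proof of Theorem~\ref{g_char} at the level of selection principles rather than games, using the just-announced (but not yet proven) equivalence $\sels{1}{1}{\mO,\mO}\iff\sel{1}{1}{\mO,\mO}$ together with Theorem~\ref{bnd_coversel_char}. The statement has two directions. The easy direction assumes there is a compact $K\subset X$ with $\ssel{1}{\mO,\mO}$ holding over $X\setminus V$ for every open $V\supset K$; given an open cover $\mU$ of $X$, extract a finite $\mV\subset\mU$ covering $K$, put $V=\bigcup\mV$, and observe $\ssel{1}{\mO,\mO}$ holds over $X\setminus V\supset X\setminus\bigcup\mV$, which is exactly $\sels{1}{1}{\mO,\mO}$. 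No compactness subtlety arises here beyond the finite subcover.

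For the hard direction, assume $\sels{1}{1}{\mO,\mO}$. First I would note that $\sels{1}{1}{\mO,\mO}$ implies $\sel{1}{1}{\mO,\mO}$, hence (Theorem~\ref{bnd_coversel_char}) $\sbound(\mO,\mO)$, hence $X$ is Lindel\"of (e.g.\ it is Menger). Then I would define the candidate compact set by analogy with Claim~\ref{compact_menger}:
\[
K=\bigcap_{\mU\in\mO}\overline{\bigcup F_\mU},
\]
where $F_\mU\subset\mU$ is a finite set witnessing $\sels{1}{1}{\mO,\mO}$ for the cover $\mU$ (i.e.\ $\ssel{1}{\mO,\mO}$ holds over $X\setminus\bigcup F_\mU$). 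To see $K$ is compact, repeat the regularity argument of Claim~\ref{compact_menger}: given an open cover $\mC$ of $K$, shrink each member around points of $K$ using regularity, separate points outside $K$ from $K$ (using that $K$ is closed, being an intersection of closed sets), form a single cover $\mU$ of $X$, and observe $K\subset\overline{\bigcup F_\mU}\subset\bigcup\{U_x: x\in K,\ V_x\in F_\mU\}$, yielding a finite subfamily of $\mC$ covering $K$.

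It remains to show that for every open $V\supset K$, $\ssel{1}{\mO,\mO}$ holds over $X\setminus V$. Here I would use Lindel\"ofness of the closed set $X\setminus V$: the family $\{X\setminus\overline{\bigcup F_\mU}:\mU\in\mO\}$ covers $X\setminus V$ (since no point of $X\setminus V$ lies in $K$), so there is a countable subcover, say witnessed by $\{\mU_n:n\in\NN\}$. Given a sequence $\seq{\mV_n:n\in\w}$ of open covers of $X\setminus V$, partition $\w$ into infinitely many infinite pieces $\{I_n:n\in\NN\}$; for each $n$, the cover $X\setminus\overline{\bigcup F_{\mU_n}}$ is a subset of $X\setminus V$, and $\ssel{1}{\mO,\mO}$ over $X\setminus\bigcup F_{\mU_n}$ (restricted to the covers indexed by $I_n$, refined by $V$ as in the proof of Theorem~\ref{g_char}) lets us pick one open set per cover so that the union covers $X\setminus\overline{\bigcup F_{\mU_n}}$; since these sets collectively cover $X\setminus V$, we get $\ssel{1}{\mO,\mO}$ over $X\setminus V$. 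The main obstacle is bookkeeping: ensuring that the countably many applications of the one-point selection principle (one per $\mU_n$, each consuming infinitely many innings) can be interleaved into the single given sequence $\seq{\mV_n}$, exactly as in the prime-number interleaving of Theorem~\ref{g_char}; but this is routine once the indexing is set up, and it is the reason Lindel\"ofness (to get countably many pieces) is exactly what is needed.
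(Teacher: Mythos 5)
Your proposal is correct and is exactly the adaptation the paper has in mind: its proof of this proposition is literally the one line ``analogous to the proof of Theorem \ref{g_char}'', and you carry out that analogy faithfully — the same candidate $K=\bigcap_{\mU\in\mO}\overline{\bigcup F_\mU}$, the same regularity argument for compactness, and the same Lindel\"of/countable-subcover step, with the prime-power interleaving of the game proof rightly replaced by a plain partition of $\w$ into infinitely many infinite pieces since no online bookkeeping is needed for a selection principle. (Minor note: you do not need Theorem \ref{bnd_coversel_char} to get Lindel\"ofness — $\sel{1}{1}{\mO,\mO}$ trivially implies $\sfin(\mO,\mO)$ — but this costs nothing.)
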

\begin{proof}
	Analogous to the proof of Theorem \ref{g_char}.
\end{proof}

\begin{prop}
	$\sels{1}{1}{\mO,\mO}\iff \1\centernot\wins\game{1}{1}{\mO,\mO}$
\end{prop}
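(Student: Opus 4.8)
The plan is to prove the two implications separately. For $\1\doesntwin\game{1}{1}{\mO,\mO}\implies\sels{1}{1}{\mO,\mO}$ I would argue by contraposition, manufacturing a winning strategy for $\1$ out of Pawlikowski's Theorem; for the converse I would pass from $\sels{1}{1}{\mO,\mO}$ to $\sel{1}{1}{\mO,\mO}$ and then invoke the machinery of Section \ref{SEC_mod1_modfin} together with Theorem \ref{classical_nA_eqv_sbnd_cover}.

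For the first implication, assume $\sels{1}{1}{\mO,\mO}$ fails and fix an open cover $\mU$ such that $\ssel{1}{\mO,\mO}$ fails over $X\setminus\bigcup\mV$ for every finite $\mV\subseteq\mU$; in particular $\bigcup\mV\neq X$ for each such $\mV$, since $\ssel{1}{\mO,\mO}$ holds trivially over the empty space. By Pawlikowski's Theorem there is, for each finite $\mV\subseteq\mU$, a winning strategy $\gamma^{\mV}$ for $\1$ in $\sgame{1}{\mO,\mO}$ over the subspace $X\setminus\bigcup\mV$. Now let $\1$ open $\game{1}{1}{\mO,\mO}$ with $\mU$; once $\2$ answers with a finite $B_0\subseteq\mU$, put $V=\bigcup B_0$ and have $\1$ follow $\gamma^{B_0}$ from then on, translating between $X$ and $X\setminus V$ exactly as in the proof of Lemma \ref{LEMMA_gbnd_cover_A_char}: whenever $\gamma^{B_0}$ prescribes an open cover $\mathcal{C}$ of $X\setminus V$, $\1$ chooses for each $C\in\mathcal{C}$ an open $\widehat{C}\subseteq X$ with $\widehat{C}\cap(X\setminus V)=C$ and plays $\set{\widehat{C}:C\in\mathcal{C}}\cup\{V\}$; when $\2$ picks some $\widehat{C}$, $\1$ feeds $C$ back to $\gamma^{B_0}$, and when $\2$ picks $V$ (or nothing) $\1$ feeds an arbitrary fixed element of $\mathcal{C}$. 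Since $\2$ loses at once if $|B_n|>1$ for some $n\ge 1$, we may assume he obeys that rule; then his moves past inning $0$ form a legal play against $\gamma^{B_0}$, hence leave some $x\in X\setminus V$ uncovered, and since $x\notin V=\bigcup B_0$ the whole play fails to cover $x$. So $\1$ wins, as desired.

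For the converse, I would first verify the easy implication $\sels{1}{1}{\mO,\mO}\implies\sel{1}{1}{\mO,\mO}$: given open covers $\seq{\mU_n:n\in\w}$, apply $\sels{1}{1}{\mO,\mO}$ to $\mU_0$ to obtain a finite $\mV\subseteq\mU_0$ with $\ssel{1}{\mO,\mO}$ over $Y:=X\setminus\bigcup\mV$, apply the Rothberger property of $Y$ to the restricted covers $\seq{\set{U\cap Y:U\in\mU_n}:n\ge 1}$, and lift the chosen relatively open sets to elements $U_n\in\mU_n$; then $\mV$ together with $\seq{U_n:n\ge 1}$ witnesses $\sel{1}{1}{\mO,\mO}$. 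It then suffices to chain the equivalences $\sel{1}{1}{\mO,\mO}\iff\sbound(\mO,\mO)\iff\1\doesntwin\gbound(\mO,\mO)\iff\1\doesntwin\game{1}{1}{\mO,\mO}$, given by Theorems \ref{bnd_coversel_char}, \ref{classical_nA_eqv_sbnd_cover} and \ref{1_char_bnd_covergame} respectively.

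The only delicate point is the strategy transfer in the first implication: one must make the translation between open covers of $X$ and of the subspace $X\setminus V$ precise and check that $\2$'s picking of the ``junk'' set $V$ can be simulated harmlessly inside $\gamma^{B_0}$ — which is exactly where $X\setminus V\neq\emptyset$ is needed. Everything else is either immediate or an appeal to an already established result.
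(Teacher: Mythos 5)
Your proof is correct, but one half of it takes a genuinely different route from the paper's. For the contrapositive direction ($\sels{1}{1}{\mO,\mO}$ fails $\implies \1\wins\game{1}{1}{\mO,\mO}$) you and the paper do essentially the same thing: $\1$ opens with the bad cover $\mU$ and, after $\2$'s finite choice $\mV$, exploits the failure of $\ssel{1}{\mO,\mO}$ on $X\setminus\bigcup\mV$; the paper simply has $\1$ replay the witnessing sequence of covers, whereas you invoke Pawlikowski to extract a full winning strategy and then carry out the subspace-to-space translation explicitly --- correct, but only the trivial direction of Pawlikowski is actually needed there. For the other direction the paper argues locally: given a strategy $\gamma$ for $\1$, it finds a finite $\mV\subset\gamma(\seq{\,})$ with $\ssel{1}{\mO,\mO}$ on $X\setminus\bigcup\mV$ and applies the nontrivial direction of the classical Pawlikowski theorem to defeat the tail of $\gamma$ there. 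You instead prove the pleasant elementary implication $\sels{1}{1}{\mO,\mO}\implies\sel{1}{1}{\mO,\mO}$ directly (which the paper only recovers afterwards, in Corollary \ref{COR_sbnd_eqv_selsmod1}) and then chain Theorems \ref{bnd_coversel_char}, \ref{classical_nA_eqv_sbnd_cover} and \ref{1_char_bnd_covergame}; this is logically sound and non-circular, since all three results are established before this proposition and none of them uses it, but it routes through the heavy bounded analogue of Pawlikowski's theorem where the paper needs only the classical one applied to a single subspace. The trade-off: your argument makes the relation between $\sels{1}{1}$ and $\sel{1}{1}$ transparent at the cost of heavier machinery; the paper's is shorter and self-contained modulo the classical Theorem \ref{pawlikowski}.
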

\begin{proof}
	Suppose $\sels{1}{1}{\mO,\mO}$ holds and let $\gamma$ be a strategy for $\1$ in $\game{1}{1}{\mO,\mO}$. Then there is a $\mV\subset\gamma(\seq{\,})$ such that $\ssel{1}{\mO,\mO}$ holds over $X\setminus \bigcup \mV$, so it follows from Theorem \ref{pawlikowski} that $\gamma$ cannot be a winning strategy.
	
	On the other hand, suppose $\sels{1}{1}{\mO,\mO}$ fails. Then there is an open cover $\mU$ such that $\ssel{1}{\mO,\mO}$ fails over $X\setminus\bigcup\mV$ for every finite $\mV\subset\mU$. Let $\gamma(\seq{\,})=\mU$ and, if $\2$ responds with a finite $\mV\subset\mU$, then $\1$ can simply use the sequence of open covers of $X\setminus \bigcup\mV$ that witnesses that $\ssel{1}{\mO,\mO}$ fails to win the game.
\end{proof}

\begin{cor}\label{COR_sbnd_eqv_selsmod1}
	$\sels{1}{1}{\mO,\mO}\iff \sel{1}{1}{\mO,\mO}\iff\sbound(\mO,\mO)$.
\end{cor}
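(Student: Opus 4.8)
The plan is to prove $\sels{1}{1}{\mO,\mO}\iff \sel{1}{1}{\mO,\mO}\iff\sbound(\mO,\mO)$ by collecting the equivalences that are already essentially in hand and closing one remaining gap. Observe first that $\sel{1}{1}{\mO,\mO}\iff\sbound(\mO,\mO)$ is exactly Theorem \ref{bnd_coversel_char}, so only the equivalence with $\sels{1}{1}{\mO,\mO}$ needs work. The easy direction $\sels{1}{1}{\mO,\mO}\implies \sel{1}{1}{\mO,\mO}$ is immediate from the definitions: given a sequence $\seq{\mU_n:n\in\w}$ of open covers, apply $\sels{1}{1}{\mO,\mO}$ to $\mU_0$ to get a finite $\mV\subset\mU_0$ with $\ssel{1}{\mO,\mO}$ holding over $X\setminus\bigcup\mV$, then apply $\ssel{1}{\mO,\mO}$ to the traces of $\seq{\mU_n:n\ge 1}$ on $X\setminus\bigcup\mV$, picking one set from each, and add $\mV$ as the $0$-th (finite) selection; together these cover $X$, witnessing $\sel{1}{1}{\mO,\mO}$.

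The substance, then, is the reverse implication $\sel{1}{1}{\mO,\mO}\implies\sels{1}{1}{\mO,\mO}$. Here I would not argue directly but instead route through the game characterizations already proved. By the proposition just above in the excerpt, $\sels{1}{1}{\mO,\mO}\iff \1\centernot\wins\game{1}{1}{\mO,\mO}$. On the other side, by Theorem \ref{bnd_coversel_char} we have $\sel{1}{1}{\mO,\mO}\iff\sbound(\mO,\mO)$, and by Theorem \ref{classical_nA_eqv_sbnd_cover} we have $\sbound(\mO,\mO)\iff\1\doesntwin\gbound(\mO,\mO)$, and finally by Theorem \ref{1_char_bnd_covergame} we have $\1\wins\gbound(\mO,\mO)\iff\1\wins\game{1}{1}{\mO,\mO}$, hence $\1\centernot\wins\gbound(\mO,\mO)\iff\1\centernot\wins\game{1}{1}{\mO,\mO}$. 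Chaining these: $\sel{1}{1}{\mO,\mO}\iff\1\centernot\wins\game{1}{1}{\mO,\mO}\iff\sels{1}{1}{\mO,\mO}$, which is what we want, and in fact gives all three equivalences at once.

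The only genuine obstacle is making sure no circularity creeps in: Theorem \ref{classical_nA_eqv_sbnd_cover} was itself proved using Proposition \ref{bnd_coversel_char} and Theorem \ref{1_char_bnd_covergame}, but crucially \emph{not} using $\sels{1}{1}{\mO,\mO}$ or the proposition $\sels{1}{1}{\mO,\mO}\iff\1\centernot\wins\game{1}{1}{\mO,\mO}$, so the chain above is legitimate. I would therefore write the proof as: ``By Theorem \ref{bnd_coversel_char} it suffices to show $\sel{1}{1}{\mO,\mO}\iff\sels{1}{1}{\mO,\mO}$. By the previous proposition, $\sels{1}{1}{\mO,\mO}\iff\1\centernot\wins\game{1}{1}{\mO,\mO}$. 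By Theorem \ref{1_char_bnd_covergame}, $\1\centernot\wins\game{1}{1}{\mO,\mO}\iff\1\centernot\wins\gbound(\mO,\mO)$, which by Theorem \ref{classical_nA_eqv_sbnd_cover} is equivalent to $\sbound(\mO,\mO)$, which by Theorem \ref{bnd_coversel_char} is equivalent to $\sel{1}{1}{\mO,\mO}$.'' This is short, but it is exactly the kind of corollary that ties the selection-principle side and the game side together, so the proof is really just the bookkeeping of which earlier results feed into which.
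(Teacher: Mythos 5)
Your proof is correct and follows essentially the same route the paper intends for this corollary: chaining Theorem \ref{bnd_coversel_char}, Theorem \ref{classical_nA_eqv_sbnd_cover}, Theorem \ref{1_char_bnd_covergame}, and the proposition $\sels{1}{1}{\mO,\mO}\iff\1\centernot\wins\game{1}{1}{\mO,\mO}$, with no circularity. The direct argument you sketch for $\sels{1}{1}{\mO,\mO}\implies\sel{1}{1}{\mO,\mO}$ is also fine, though unneeded once the chain is in place.
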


\begin{cor}\label{sbnd_eqv_thereisKcompact}
	Let $X$ be a regular space. Then $\sbound(\mO,\mO)$ holds if, and only if, there is a compact set $K\subset X$ such that, for every open set $V\supset K$, $\ssel{1}{\mO,\mO}$ holds over $X\setminus V$.
\end{cor}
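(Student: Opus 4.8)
The plan is to reduce the statement to results already established in the excerpt. The final statement (Corollary \ref{sbnd_eqv_thereisKcompact}) asserts, for a regular space $X$, that $\sbound(\mO,\mO)$ holds if and only if there is a compact $K\subset X$ with $\ssel{1}{\mO,\mO}$ holding over $X\setminus V$ for every open $V\supset K$. I would prove this by chaining together the two preceding results. First, by Corollary \ref{COR_sbnd_eqv_selsmod1} we have $\sbound(\mO,\mO)\iff\sels{1}{1}{\mO,\mO}$, so it suffices to characterize $\sels{1}{1}{\mO,\mO}$. Second, the proposition immediately before Corollary \ref{COR_sbnd_eqv_selsmod1} (the one stating ``$\sels{1}{1}{\mO,\mO}$ holds iff there is a compact $K\subset X$ such that $\ssel{1}{\mO,\mO}$ holds over $X\setminus V$ for every open $V\supset K$'') gives exactly the desired characterization of $\sels{1}{1}{\mO,\mO}$ under regularity. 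Composing these two equivalences yields the corollary.

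Concretely, the proof would read: ``By Corollary \ref{COR_sbnd_eqv_selsmod1}, $\sbound(\mO,\mO)$ is equivalent to $\sels{1}{1}{\mO,\mO}$. Since $X$ is regular, the previous proposition shows $\sels{1}{1}{\mO,\mO}$ holds if and only if there is a compact $K\subset X$ such that $\ssel{1}{\mO,\mO}$ holds over $X\setminus V$ for every open $V\supset K$. Combining these two equivalences completes the proof.'' This is essentially a one-line composition, so there is no real obstacle at this final step.

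If instead one wanted to see the genuine content, the main work is hidden in the cited ``previous proposition'', whose proof is stated to be ``analogous to the proof of Theorem \ref{g_char}''. There, the forward direction is routine: given compact $K$ and a cover $\mU$, take a finite subcover $\mathcal V\subset\mU$ of $K$ and observe $\ssel{1}{\mO,\mO}$ holds over $X\setminus\bigcup\mathcal V$. The reverse direction mimics Claim \ref{compact_menger}: one sets $K=\bigcap_{\mU\in\mO}\overline{\bigcup\mathcal V_\mU}$ where $\mathcal V_\mU$ is the finite subset witnessing $\sels{1}{1}{\mO,\mO}$ for $\mU$, uses regularity (shrinking each member of a cover to have closure inside a cover element) to verify compactness of $K$, and then for each open $V\supset K$ uses Lindel\"ofness (which follows since $\sels{1}{1}{\mO,\mO}$ in particular gives the Rothberger-type property, hence Lindel\"of) to extract a countable subcover of $X\setminus V$ from $\{X\setminus\overline{\bigcup\mathcal V_\mU}:\mU\in\mO\}$ and diagonalize against the selections. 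But none of this needs to be redone here, since we are permitted to invoke the stated proposition directly.

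The only point requiring a moment's care is that the proposition is phrased for regular $X$, and the corollary also assumes $X$ regular, so there is no gap: both hypotheses match, and Corollary \ref{COR_sbnd_eqv_selsmod1} holds without any separation assumption. Hence I would simply present the short composition argument as the proof, with a pointer to Corollary \ref{COR_sbnd_eqv_selsmod1} and the preceding proposition, and expect no substantive obstacle at this stage.
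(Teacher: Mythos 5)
Your proof is correct and coincides with the paper's (implicit) argument: the corollary is stated without proof precisely because it is the immediate composition of Corollary \ref{COR_sbnd_eqv_selsmod1} (giving $\sbound(\mO,\mO)\iff\sels{1}{1}{\mO,\mO}$) with the preceding proposition characterizing $\sels{1}{1}{\mO,\mO}$ in regular spaces via a compact kernel $K$. Your supplementary sketch of that proposition's proof also matches the paper's indication that it is analogous to Theorem \ref{g_char}.
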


With the help of Corollary \ref{sbnd_eqv_thereisKcompact} we can even characterize some metrizable spaces. We just need to consider the following result from Fremlin and Miller.

\begin{thm}[\cite{Miller1988}, Theorem 1]
	Given a metrizable space $(X,\tau)$, $\ssel{1}{\mO,\mO}$ holds if, and only if, $X$ has strong measure zero with respect to every metric which gives topology $\tau$.
\end{thm}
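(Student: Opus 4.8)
The plan is to prove the two implications separately; the forward one is a one-line reduction and all the work is in the reverse one.

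\emph{Rothberger $\Rightarrow$ strong measure zero with respect to every compatible metric.} Fix a metric $d$ generating $\tau$ and a sequence $\seq{\varepsilon_n:n\in\w}$ of positive reals. For each $n$ put $\mU_n=\set{B_d(x,\varepsilon_n/2):x\in X}\in\mO$. Applying $\ssel{1}{\mO,\mO}$ to $\seq{\mU_n:n\in\w}$ yields balls $U_n\in\mU_n$ with $\bigcup_{n\in\w}U_n=X$; since each $U_n$ has $d$-diameter at most $\varepsilon_n$, this witnesses strong measure zero for the gauge sequence $\seq{\varepsilon_n:n\in\w}$. As $d$ was arbitrary, this direction is done.

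\emph{Strong measure zero with respect to every compatible metric $\Rightarrow$ Rothberger.} Let $\seq{\mU_n:n\in\w}$ be a sequence of open covers of $X$. The idea is to compress the whole sequence into a single compatible metric $\rho$, calibrated so that at the $n$-th scale every set of sufficiently small $\rho$-diameter lies inside one member of $\mU_n$; a strong measure zero decomposition of $X$ for an appropriate sequence of $\rho$-gauges then produces the required selection with no further effort. Concretely, fix a compatible metric $d$, and — using that every open cover of a metrizable space admits an open star-refinement, and that finite common refinements of open covers are open covers — recursively build open covers $\seq{\mW_k:k\in\w}$ so that $\mW_0$ refines both $\mU_0$ and $\set{B_d(x,1):x\in X}$ and, for every $k$, $\mW_{k+1}$ is an open star-refinement of $\mW_k\wedge\mU_{k+1}\wedge\set{B_d(x,2^{-k-1}):x\in X}$. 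Then for each $k$ one has: $\mW_{k+1}$ star-refines $\mW_k$; $\mW_k$ refines $\mU_k$; and every member of $\mW_k$ has $d$-diameter at most $2^{-k+1}$, so the covers $\seq{\mW_k:k\in\w}$ separate points. By the Alexandroff--Urysohn metrization lemma this sequence induces a metric $\rho$ generating $\tau$ together with positive reals $\seq{\varepsilon_k:k\in\w}$ decreasing to $0$ such that every $A\subset X$ with $\mathrm{diam}_\rho(A)\le\varepsilon_k$ is contained in a single member of $\mW_k$, hence — since $\mW_k$ refines $\mU_k$ — in a single member of $\mU_k$. Now invoke the hypothesis for the metric $\rho$ and the gauge sequence $\seq{\varepsilon_k:k\in\w}$: there are sets $A_k\subset X$ with $\mathrm{diam}_\rho(A_k)\le\varepsilon_k$ and $\bigcup_{k\in\w}A_k=X$; for each $k$ pick $U_k\in\mU_k$ with $A_k\subset U_k$. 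Then $\bigcup_{k\in\w}U_k\supset\bigcup_{k\in\w}A_k=X$, so $\seq{U_k:k\in\w}$ witnesses $\ssel{1}{\mO,\mO}$ for $\seq{\mU_k:k\in\w}$.

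I expect the crux to be the middle step — engineering the auxiliary covers $\mW_k$ so that they simultaneously form a star-refinement chain (which is exactly what lets the Alexandroff--Urysohn lemma convert them into a genuine compatible metric with the sharp ``small $\rho$-diameter at scale $k$ implies contained in one $\mW_k$-set'' calibration), refine the prescribed cover $\mU_k$ at the matching level, and shrink in the $d$-metric fast enough to separate points. Once $\rho$ and the gauges $\varepsilon_k$ have been produced with the correct calibration, extracting the Rothberger selection is automatic, because the indexing of the pieces $A_k$ of the strong measure zero decomposition already lines up with the indexing of the covers. (One may also observe, although it is not needed for the argument, that the hypothesis already forces $X$ to be separable: if $X$ were non-separable, any compatible metric $d$ would admit, for some $\varepsilon>0$, an uncountable $\varepsilon$-separated set, and then $X$ could not have strong measure zero with respect to $d$, since no countable family of sets of $d$-diameter $<\varepsilon$ covers it.)
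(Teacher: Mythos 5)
This statement is not proved in the paper at all: it is imported verbatim as Theorem~1 of Fremlin and Miller \cite{Miller1988}, so there is no in-paper argument to measure yours against. Your proof is correct and, for what it is worth, essentially reconstructs the argument of the cited reference. The forward direction is the routine reduction via the covers by $\varepsilon_n/2$-balls. For the reverse direction, your plan --- star-refine the given covers into a chain $\seq{\mW_k:k\in\w}$ that simultaneously refines each $\mU_k$ and shrinks in a fixed background metric $d$, feed the chain to the Alexandroff--Urysohn metrization lemma to produce a compatible metric $\rho$ calibrated so that sets of small $\rho$-diameter at level $k$ sit inside one member of $\mW_k$, and then read the Rothberger selector off a strong-measure-zero decomposition for the resulting gauges --- is sound. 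The three points that need (and in your write-up do get) attention are: metrizability (full normality) supplies the open star-refinements; the $d$-diameter control forces the chain to separate points, so $\rho$ is a genuine metric generating $\tau$ rather than a pseudometric, with the $d$-shrinking also giving $\tau\subset\tau_\rho$ while the star structure gives $\tau_\rho\subset\tau$; and the calibration ``$\mathrm{diam}_\rho(A)\le\varepsilon_k$ implies $A$ lies in one member of $\mW_k$'' costs one further application of the star-refinement property (a set of small $\rho$-diameter lies in $\mathrm{St}(x,\mW_{k+1})$, which by construction lies in a single member of $\mW_k\wedge\mU_{k+1}$), i.e., a harmless index shift that you should make explicit when fixing the $\varepsilon_k$. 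Empty pieces $A_k$ of the decomposition cause no trouble, since $\ssel{1}{\mO,\mO}$ as defined in this paper permits empty selections. Your closing observation that the hypothesis forces separability is correct but, as you say, not needed.
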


\begin{cor}\label{sbnd_eqv_thereisKcompact&smz}
	Let $(X,\tau)$ be a metrizable space. Then $\sbound(\mO,\mO)$ holds if, and only if, there is a compact set $K\subset X$ and a set $N\subset X$ that is strong measure zero with respect to every metric that gives topology $\tau$ such that $X=K\cup N$.
\end{cor}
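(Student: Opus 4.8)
The plan is to derive this corollary by combining Corollary~\ref{sbnd_eqv_thereisKcompact} with the Fremlin--Miller theorem, using three elementary facts: strong measure zero with respect to a \emph{fixed} metric is hereditary for subsets, it is preserved under countable unions, and (Hausdorff's metric extension theorem) a metric on a closed subspace of a metrizable space extends to a compatible metric on the whole space. Since metrizable spaces are regular, Corollary~\ref{sbnd_eqv_thereisKcompact} applies to $X$, so throughout I will freely use that $\sbound(\mO,\mO)$ holds over $X$ if and only if there is a compact $K\subset X$ with $\ssel{1}{\mO,\mO}$ holding over $X\setminus V$ for every open $V\supset K$.

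For the forward implication I would start from such a compact set $K$ and simply take $N=X\setminus K$, so that $X=K\cup N$ trivially. To check that $N$ has strong measure zero with respect to an arbitrary metric $\rho$ inducing $\tau$, I would use that $K$, being closed in a metrizable space, is a $G_\delta$: pick open sets $V_n\supset K$ with $V_1\supseteq V_2\supseteq\cdots$ and $\bigcap_n V_n=K$, so that $N=\bigcup_n(X\setminus V_n)$. Each subspace $X\setminus V_n$ satisfies $\ssel{1}{\mO,\mO}$, hence by the Fremlin--Miller theorem has strong measure zero with respect to every metric inducing its subspace topology, in particular with respect to $\rho\restrict(X\setminus V_n)$. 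A countable union of sets of $\rho$-strong measure zero again has $\rho$-strong measure zero, so $N$ has strong measure zero with respect to $\rho$; as $\rho$ was arbitrary, this direction is complete.

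For the converse I would assume $X=K\cup N$ with $K$ compact and $N$ of strong measure zero with respect to every metric inducing $\tau$, and show that $K$ itself witnesses the condition in Corollary~\ref{sbnd_eqv_thereisKcompact}. Fix an open $V\supset K$ and set $Y=X\setminus V$, which is closed in $X$ and contained in $X\setminus K\subseteq N$. I must show $\ssel{1}{\mO,\mO}$ holds over $Y$, and by the Fremlin--Miller theorem this reduces to showing $Y$ has strong measure zero with respect to every metric $\rho_Y$ inducing the subspace topology on $Y$. Given such a $\rho_Y$, since $Y$ is closed in $X$, Hausdorff's extension theorem provides a metric $\rho$ on $X$ inducing $\tau$ with $\rho\restrict Y=\rho_Y$; by hypothesis $N$ has strong measure zero with respect to $\rho$, and since $Y\subseteq N$ and this property passes to subsets, $Y$ has strong measure zero with respect to $\rho_Y$. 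Hence $Y$ is Rothberger, and Corollary~\ref{sbnd_eqv_thereisKcompact} yields $\sbound(\mO,\mO)$.

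The only non-routine step is the appeal to Hausdorff's extension theorem in the converse: the hypothesis gives us control over metrics on the ambient space $X$, whereas Fremlin--Miller applied to the subspace $Y=X\setminus V$ demands control over \emph{all} metrics compatible with the subspace topology of $Y$, and it is precisely the closedness of $Y$ in $X$ that bridges this gap.
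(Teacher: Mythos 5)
Your argument is correct and is essentially the derivation the paper intends: the corollary is stated without proof precisely because it is meant to follow from Corollary~\ref{sbnd_eqv_thereisKcompact} together with the Fremlin--Miller theorem, and you have supplied the two details the paper leaves implicit (writing $X\setminus K$ as a countable union of closed Rothberger, hence strong measure zero, subspaces via the $G_\delta$ property of $K$; and using Hausdorff's metric extension theorem to pass from metrics on the closed subspace $X\setminus V$ to compatible metrics on $X$). No gaps.
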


\section{The dual game}\label{SEC_dual}
Let us recall a classical topological game:

\begin{defn}\label{DEFN_point-open}
	The {\bf point-open game} is the following game played between $\1$ and $\2$ over a space $X$: in each inning $n\in\w$ $\1$ chooses $x_n\in X$ and $\2$ responds with an open neighborhood $V_n$ of $x_n$. $\1$ wins the game if $\bigcup_{n\in\w}V_n=X$ and $\2$ wins otherwise.
\end{defn}

We are interested in this game because it is known to have a strong relation with one of the games studied here:

\begin{defn}
	Two games $\G_1$ and $\G_2$ are {\bf dual} if the two following assertions hold.
	\begin{itemize}
		\item[(a)] $\1\wins\G_1\iff\2\wins\G_2$;
		\item[(b)]$\2\wins\G_1\iff\1\wins\G_2$.
	\end{itemize}
\end{defn}

\begin{thm}[\cite{Galvin1978}, Galvin]\label{THM_po_roth_dual}
	The point-open game is dual to $\sgame{1}{\mO,\mO}$.
\end{thm}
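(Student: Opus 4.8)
The plan is to verify the two biconditionals in the definition of duality by converting strategies back and forth. Writing $\G_1$ for the point-open game and $\G_2$ for $\sgame{1}{\mO,\mO}$, and identifying a one-element selection $\{U\}$ with $U$, this amounts to the four implications $\1\wins\G_1\Rightarrow\2\wins\G_2$, $\1\wins\G_2\Rightarrow\2\wins\G_1$, $\2\wins\G_1\Rightarrow\1\wins\G_2$, and $\2\wins\G_2\Rightarrow\1\wins\G_1$. The organizing idea is that in both games the union of the \emph{second} player's open sets is what is tested against $X$, but the player who \emph{wants} that union to be a cover is $\1$ in $\G_1$ and $\2$ in $\G_2$; each implication therefore translates a strategy for the ``cover-wanting'' party (or its opponent) in one game into a strategy for the corresponding party in the other.

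First I would dispatch $\1\wins\G_1\Rightarrow\2\wins\G_2$: given a winning strategy $\sigma$ for $\1$ in $\G_1$, let $\2$ in $\G_2$ answer $\1$'s cover $\mU_n$ with any $U_n\in\mU_n$ containing the point $\sigma(\seq{U_0,\dots,U_{n-1}})$ — this exists because $\mU_n$ covers $X$ — so that $\1$'s points $\sigma(\seq{\,}),\sigma(\seq{U_0}),\sigma(\seq{U_0,U_1}),\dots$ played against $\2$'s moves $U_0,U_1,\dots$ form a legal run of $\G_1$ following $\sigma$; hence $\bigcup_nU_n=X$ and $\2$ wins. The implication $\1\wins\G_2\Rightarrow\2\wins\G_1$ is the mirror image: a winning Alice-strategy $\sigma$ in $\G_2$ lets $\2$ in $\G_1$, on seeing $\1$'s point $x_n$, form the cover $\sigma(\seq{V_0,\dots,V_{n-1}})$ out of his own earlier neighborhoods and pick $V_n$ in it containing $x_n$; the resulting run of $\G_2$ follows $\sigma$, so $\bigcup_nV_n\ne X$.

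For $\2\wins\G_1\Rightarrow\1\wins\G_2$, regard a winning strategy $\tau$ for $\2$ in $\G_1$ as assigning to each $\seq{x_0,\dots,x_n}$ an open set $\tau(\seq{x_0,\dots,x_n})\ni x_n$. Then $\1$ in $\G_2$ plays the covers $\mC_n=\set{\tau(\seq{x_0,\dots,x_{n-1},x}):x\in X}$, where $x_0,\dots,x_{n-1}$ are fixed (via a choice function) so that $\2$'s previous selections equal $\tau(\seq{x_0,\dots,x_i})$; each $\mC_n$ is an open cover because $y\in\tau(\seq{\dots,y})$, and $\2$'s selections in the resulting run of $\G_2$ are precisely $\tau$'s responses to $\seq{x_0,x_1,\dots}$, so their union is $\ne X$ and $\1$ wins.

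The remaining implication $\2\wins\G_2\Rightarrow\1\wins\G_1$ is the crux, and here the easy mirror fails: a winning strategy $\tau$ for $\2$ in $\G_2$ digests whole open covers, while $\1$ in the point-open game emits only single points, so there is no direct way to ``feed $x_n$ to $\tau$''. The plan is to have $\1$ run a shadow copy of $\G_2$ in which she plays Alice against $\tau$, feeding covers $\mU_0,\mU_1,\dots$ built from the neighborhoods $V_0,\dots,V_{n-1}$ that $\2$ has revealed so far and recording the selections $W_n=\tau(\seq{\mU_0,\dots,\mU_n})$; since $\tau$ wins $\G_2$ one has $\bigcup_nW_n=X$. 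Her point-open moves $x_n$ are then aimed so that the $W_n$ are ``redeemed'' by $\2$'s neighborhoods, i.e.\ $\bigcup_nW_n\subseteq\bigcup_nV_n$, whence $\bigcup_nV_n=X$ and $\1$ wins. The entire difficulty lies in the choice of the shadow covers: each $\mU_n$ must be a genuine open cover depending only on $V_0,\dots,V_{n-1}$, must be arranged so that whichever neighborhood $V_n$ of $x_n$ Bob plays the forced selection $W_n$ still lands inside the region the $V$'s must ultimately cover, and the whole sequence $\seq{\mU_n}_n$ must constitute a single legitimate run of $\G_2$ against $\tau$; making all this cohere is Galvin's combinatorial device and is essentially the whole content of the theorem, the other three implications being purely formal once it is in place.
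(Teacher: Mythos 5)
Your first three implications are correct and are the standard formal translations (Bob in $\sgame{1}{\mO,\mO}$ shadowing Alice's points, Bob in the point-open game shadowing Alice's covers, and Alice in $\sgame{1}{\mO,\mO}$ playing the covers $\set{\tau(\seq{x_0,\dots,x_{n-1},x}):x\in X}$). But the proposal is not a proof: the implication $\2\wins\sgame{1}{\mO,\mO}\implies\1$ wins the point-open game is never established. You describe the intended shape of a ``shadow play'' and then explicitly defer the construction of the covers to ``Galvin's combinatorial device,'' acknowledging that this is essentially the whole content of the theorem. That is precisely the nontrivial direction, and it is the one this paper actually uses (in the proof of Theorem \ref{gbnd_duality}(c), via Theorem \ref{gbnd_cover_char}), so it cannot be left open. (For calibration: the paper itself offers no proof either --- the result is quoted from \cite{Galvin1978} --- so the only thing your write-up adds over the citation is the three easy directions.)

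The missing idea can be isolated as a one-line lemma. Let $\sigma$ be a winning strategy for $\2$ in $\sgame{1}{\mO,\mO}$ and let $s$ be any finite sequence of open covers (a position in that game). Then there is a point $x_s\in X$ such that for every open $V\ni x_s$ there is an open cover $\mU$ with $\sigma(s^\smallfrown\seq{\mU})\subset V$. Otherwise every $x\in X$ would have an open neighbourhood $V_x$ such that no cover $\mU$ yields $\sigma(s^\smallfrown\seq{\mU})\subset V_x$; but $\mU=\set{V_x:x\in X}$ is itself an open cover, and $\sigma(s^\smallfrown\seq{\mU})$ must be an element of it, hence equal to (so contained in) some $V_x$ --- a contradiction. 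Granting the lemma, $\1$ plays $x_s$ in the point-open game at position $s$; when $\2$ answers with an open $V\ni x_s$, she extends $s$ by a cover $\mU$ with $\sigma(s^\smallfrown\seq{\mU})\subset V$ and continues. The union of $\2$'s neighbourhoods then contains the union of $\sigma$'s selections along a legal run of $\sgame{1}{\mO,\mO}$, which is all of $X$ because $\sigma$ is winning; so $\1$ wins the point-open game. Without this lemma (or an equivalent device), your fourth implication does not go through.
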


Our goal here is to find a duality similar to \ref{THM_po_roth_dual} for $\sgame{bnd}{\mO,\mO}$, that is, to find a variation of \ref{DEFN_point-open} that is dual to $\sgame{bnd}{\mO,\mO}$. One natural variation is the game in which in each inning $n\in\w$ $\1$ is allowed to choose finitely many points (instead of just one) and $\2$ has to cover those points with an open set (this game is known as ``finite-open game''). It can be easily checked, however, that this variation is actually equivalent to the point-open game (in fact, Telg\'arsky introduced this game and proved this equivalence in \cite{Telgarsky1983}). So, consider the following. 

\begin{defn}\label{DEFN_compact&point-open}
	Given a space $X$, we denote by $\G(X)$ the following game played between $\1$ and $\2$: in the first inning, $\1$ chooses a compact set $K_0$ and $\2$ responds with $V_0\supset K_0$ open. Then in each inning $n>0$ $\1$ chooses $x_n\in X$ and $\2$ responds with an open neighborhood $V_n$ of $x_n$. $\1$ wins the game if $\bigcup_{n\in\w}V_n=X$ and $\2$ wins otherwise.
\end{defn}

In this case, our duality naturally rises as a simple translation of Theorems \ref{gbnd_cover_char}, \ref{classical_nA_eqv_sbnd_cover} and Corollary \ref{sbnd_eqv_thereisKcompact}:

\begin{thm}\label{gbnd_duality}
	For every topological space:
	\begin{itemize}
		\item[(a)] If $\1\wins\gbound(\mO,\mO)$, then $\2\wins\G(X)$;
		\item[(b)] If $\1\wins\G(X)$, then $\2\wins\gbound(\mO,\mO)$.
	\end{itemize}
	Moreover, if $X$ is a regular space:
	\begin{itemize}
		\item[(c)] If $\2\wins\gbound(\mO,\mO)$, then $\1\wins\G(X)$;
		\item[(d)] If $\2\wins\G(X)$, then $\1\wins\gbound(\mO,\mO)$.
	\end{itemize}
\end{thm}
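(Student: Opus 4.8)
\textbf{Proof plan for Theorem \ref{gbnd_duality}.}
The plan is to deduce all four implications from results already in hand, so that nothing new about strategies needs to be constructed from scratch. The key dictionary entries are: Galvin's duality (Theorem \ref{THM_po_roth_dual}) relating the point-open game to $\sgame{1}{\mO,\mO}$; the characterization $\2\wins\gbound(\mO,\mO)$ iff there is a compact $K$ with $\2\wins\sgame{1}{\mO,\mO}$ over $X\setminus V$ for every open $V\supset K$ (Theorem \ref{gbnd_cover_char}, regular case), together with its Alice-side counterpart via Theorem \ref{classical_nA_eqv_sbnd_cover} and Corollary \ref{sbnd_eqv_thereisKcompact}; and the observation that, once the first inning of $\G(X)$ is over (Alice has played a compact $K_0$, Bob an open $V_0\supset K_0$), the remainder of $\G(X)$ is literally a copy of the point-open game played over the subspace $X\setminus V_0$.

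First I would prove (b). Suppose $\1$ has a winning strategy in $\G(X)$; let $K_0$ be her opening compact set. For each open $V\supset K_0$, restricting the strategy to plays where Bob answers $V_0=V$ in the first inning gives $\1$ a winning strategy in the point-open game over $X\setminus V$, hence (Theorem \ref{THM_po_roth_dual}) $\2\wins\sgame{1}{\mO,\mO}$ over $X\setminus V$. Since this holds for \emph{every} open $V\supset K_0$, the criterion in Theorem \ref{gbnd_cover_char} is met — but that theorem is stated for regular spaces, so for the general-space statement (b) I would instead use Theorem \ref{classical_nA_eqv_sbnd_cover}: $\1$ having a winning strategy in $\G(X)$ forces $\sbound(\mO,\mO)$ to fail (an explicit covering-sequence witness comes from the point-open strategies over the $X\setminus V$'s, diagonalized as in the proof of Corollary \ref{sbnd_eqv_thereisKcompact}), and $\neg\sbound(\mO,\mO)$ gives $\1\wins\gbound(\mO,\mO)$ by Theorem \ref{classical_nA_eqv_sbnd_cover} — wait, that is the contrapositive of what is needed, so the cleanest route for (b) in full generality is: contrapositively, if $\2\doesntwin\gbound(\mO,\mO)$ then $\sbound(\mO,\mO)$ holds (Proposition \ref{classical_nA_imp_sel_bnd}), and one checks directly that $\sbound(\mO,\mO)$ lets $\2$ defeat any strategy of $\1$ in $\G(X)$ by feeding the open covers produced by $\1$'s point-open-type moves into the selection principle. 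For (a), contrapositively assume $\2\doesntwin\G(X)$; then in particular $\1$ cannot win $\G(X)$ trivially, and one shows $\1\doesntwin\gbound(\mO,\mO)$ by using a strategy of $\2$ in $\G(X)$ — ah, but $\2$ need not have a winning strategy. The genuinely clean statement is the forward direction: if $\1\wins\gbound(\mO,\mO)$ then by Theorem \ref{1_char_bnd_covergame} $\1\wins\game{1}{1}{\mO,\mO}$, and I would translate that Alice strategy move-by-move into a Bob strategy in $\G(X)$: Bob's first open cover move in $\game{1}{1}{\mO,\mO}$ has a finite subfamily covering any prescribed compact set, so when Alice plays $K_0$ in $\G(X)$, Bob answers with (the union of) that finite subcover; thereafter each single open set Alice plays in $\game{1}{1}{\mO,\mO}$ is what Bob-in-$\G(X)$ must cover, i.e.\ Alice-in-$\G(X)$'s point $x_n$ sits in some set of Alice-in-$\game{1}{1}$'s cover and Bob picks that set — and the win condition $\bigcup V_n=X$ for $\1$ in $\G(X)$ is exactly the condition that makes Bob-in-$\game{1}{1}$ lose, contradicting that it was a winning strategy; so no such Alice-strategy in $\G(X)$ wins against this Bob, giving $\2\wins\G(X)$.

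For (c) and (d), with $X$ regular I can invoke the sharper Theorem \ref{gbnd_cover_char} directly. For (d): if $\2\wins\G(X)$, fix Bob's strategy; whatever compact $K_0$ Alice opens with, Bob's first reply $V_0\supset K_0$ is determined, and thereafter Bob is playing a winning strategy in the point-open game over $X\setminus V_0$; but the compact set $K$ supplied by this has to be chosen before $V_0$, so I would instead argue: for \emph{every} open $V\supset K_0$, having Alice play $K_0$ and then the trivial continuation shows Bob wins point-open over $X\setminus V_0$ for the specific $V_0$ Bob chose, and monotonicity (a winning Bob strategy over a larger complement restricts to one over a smaller complement — point-open over a subspace, larger open set removed means smaller subspace, and Bob winning the point-open game is downward-closed under passing to closed subspaces) lets me conclude $\2\wins\sgame{1}{\mO,\mO}$ over $X\setminus V$ for all open $V\supset K_0$; then Theorem \ref{gbnd_cover_char} gives $\2\wins\gbound(\mO,\mO)$. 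Hold on — the direction of monotonicity needs care, and this is the main obstacle: I must verify that "Bob wins the point-open game over $Y$" passes to closed $Y'\subseteq Y$, equivalently that "$\2\wins\sgame{1}{\mO,\mO}$" is inherited by closed subspaces, which is standard (a closed subspace's open covers extend to the whole space). Granting that, (d) follows, and (c) is the converse: if $\2\wins\gbound(\mO,\mO)$ then by Theorem \ref{gbnd_cover_char} there is a compact $K$ with $\2\wins\sgame{1}{\mO,\mO}$, hence (Galvin) $\1\wins$ point-open, over $X\setminus V$ for every open $V\supset K$; Alice's strategy in $\G(X)$ is then: open with $K_0:=K$; whatever open $V_0\supset K$ Bob replies with, switch to a winning point-open strategy over $X\setminus V_0$ for innings $n\ge 1$; since $\bigcup_{n\ge 1}V_n\supseteq X\setminus V_0$ and $V_0\supseteq K_0$, we get $\bigcup_{n\in\w}V_n=X$, so Alice wins. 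The main obstacle throughout is bookkeeping the order "$K$ chosen before $V$" versus "the criterion quantifies over all $V\supset K$", which is resolved precisely by the closed-subspace monotonicity of $\2\wins\sgame{1}{\mO,\mO}$ noted above; everything else is routine translation between the two game formats.
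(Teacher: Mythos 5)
Parts (a) and (c) of your plan are sound and essentially match the paper: (c) is exactly the paper's route (Theorem \ref{gbnd_cover_char} plus Galvin's duality, with \textsc{Alice} opening with the compact set $K$ and then following a winning point-open strategy over $X\setminus V_0$), and (a) is the easy move-by-move translation. Part (b), however, should just be the direct translation you sketched for (a) run in the other direction: given $\1$'s opening compact $K_0$ in $\G(X)$, \textsc{Bob} in $\gbound(\mO,\mO)$ answers $\mU_0$ with a finite subcover of $K_0$, feeds its union to $\1$'s strategy as $V_0$, and thereafter selects one set per inning containing the point $x_n$ produced by that strategy; this is bounded and covers $X$ because $\1$ wins $\G(X)$. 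Your proposed ``cleanest route'' for (b) misquotes Proposition \ref{classical_nA_imp_sel_bnd}: it says $\1\doesntwin\gbound(\mA,\mB)\implies\sbound(\mA,\mB)$, not $\2\doesntwin\gbound(\mA,\mB)\implies\sbound(\mA,\mB)$, so that contrapositive argument does not start.

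The genuine gap is in (d), where you prove the wrong statement. Item (d) asserts that $\2\wins\G(X)$ implies $\1\wins\gbound(\mO,\mO)$; in the duality it pairs with (a), not with (c), yet your argument aims at $\2\wins\gbound(\mO,\mO)$. Two further steps fail along the way. First, Galvin's duality translates ``\,$\2$ wins the point-open game over $Y$\,'' into $\1\wins\sgame{1}{\mO,\mO}$ over $Y$, not into $\2\wins\sgame{1}{\mO,\mO}$ over $Y$. Second, the monotonicity patch does not apply: an arbitrary open $V\supset K_0$ need not contain \textsc{Bob}'s reply $V_0$, so $X\setminus V$ is not a closed subspace of $X\setminus V_0$ and nothing is inherited. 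The paper's proof avoids all of this: $\2\wins\G(X)$ yields, for \emph{every} compact $K$, \emph{some} open $V\supset K$ with $\2$ winning point-open over $X\setminus V$; Galvin gives $\1\wins\sgame{1}{\mO,\mO}$ over $X\setminus V$, Pawlikowski (Theorem \ref{pawlikowski}) gives that $\ssel{1}{\mO,\mO}$ fails over $X\setminus V$, and this ``for all $K$ there exists $V$'' statement is precisely the negation of the criterion in Corollary \ref{sbnd_eqv_thereisKcompact} (this is where regularity enters), so $\sbound(\mO,\mO)$ fails and Theorem \ref{classical_nA_eqv_sbnd_cover} yields $\1\wins\gbound(\mO,\mO)$. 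The quantifier order you were worried about is thus handled by passing to selection principles rather than by any subspace monotonicity.
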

\begin{proof}
	Assertions (a) and (b) can be easily checked. Assertion (c) follows directly from Theorems \ref{gbnd_cover_char} and \ref{THM_po_roth_dual}. 
	
	Now, suppose $\2\wins\G(X)$. Then for every $K\subset X$ compact there is a $V\supset K$ open such that $\2$ has a winning strategy in the point open game over $X\setminus V$. By \ref{THM_po_roth_dual}, this implies that for every $K\subset X$ compact there is a $V\supset K$ open such that $\1\wins \sgame{1}{\mO,\mO}$ over $X\setminus V$. By Theorem \ref{pawlikowski}, this means that for every $K\subset X$ compact there is a $V\supset K$ open such that $\ssel{1}{\mO,\mO}$ fails over $X\setminus V$. Since $X$ is regular, by Corollary \ref{sbnd_eqv_thereisKcompact}, this is equivalent to $\ssel{bnd}{\mO,\mO}$ failing over $X$, which, by Theorem \ref{classical_nA_eqv_sbnd_cover}, is equivalent to $\1\wins\sgame{bnd}{\mO,\mO}$, as we wanted to prove. 
\end{proof}

We then end this section showing that the assumption of $X$ being a regular space is actually required in the proof of (c) and (d) in Theorem \ref{gbnd_duality}:

\begin{prop}\label{reg_required_covergame}
	There is a Hausdorff and non-regular space $X$ such that $\2\wins\sgame{bnd}{\mO,\mO}$, but $\2\wins\G(X)$.
\end{prop}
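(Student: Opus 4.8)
The plan is to construct an explicit Hausdorff, non-regular space $X$ on which $\2$ can win $\sgame{bnd}{\mO,\mO}$ (equivalently, by Theorem \ref{2_char_bnd_covergame} and the subsequent corollaries, $\game{1}{1}{\mO,\mO}$) but $\2$ also has a winning strategy in $\G(X)$; the point is that the failure of regularity breaks the equivalence from Corollary \ref{sbnd_eqv_thereisKcompact}. A natural candidate is a countable space, since then $\2$ trivially wins $\sgame{1}{\mO,\mO}$ (hence $\sgame{bnd}{\mO,\mO}$) by the Telg\'arsky--Galvin theorem — so the whole burden shifts to arranging that $\2$ also wins $\G(X)$, i.e. that for every compact $K\subset X$ there is an open $V\supset K$ with $\2$ winning the point-open game over $X\setminus V$. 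The obstruction to be exploited is that in a non-regular space a compact set need not have small (or even "separated") open neighborhoods, so the open $V\supset K$ that $\2$ is forced to play in $\G(X)$ may be unavoidably large, leaving $X\setminus V$ small enough (countable, in fact) for $\2$ to win the point-open game on it.

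Concretely, I would take a countable space built from a classical non-regular example: let $X$ be $\omega\times(\omega+1)$ together with two extra points $\infty_0,\infty_1$ (or a single extra point) whose neighborhoods are cofinitely-many of the "columns" $\{n\}\times(\omega+1)$ minus a closed set, glued so that the point $\infty$ cannot be separated from the "top row" $A=\{(n,\omega):n\in\omega\}$ by disjoint open sets — the standard Hausdorff-not-regular space where a point and a closed discrete set lack disjoint neighborhoods. The key feature to verify is: the only compact subsets of $X$ are those not accumulating badly at $\infty$, and every open neighborhood of any compact $K$ containing $\infty$ must swallow all but finitely many points of $A$ and hence a cofinite piece of every column — so $X\setminus V$ is always countable with all points (eventually) isolated, and in fact $\2$ wins the point-open game over it because it is, say, a countable union of finite pieces or simply countable with isolated-point structure making it point-open-losing for $\1$. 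For compact $K$ not containing $\infty$, $\2$ can just take $V$ to be an open set omitting $\infty$ whose complement contains $\infty$, again reducing $X\setminus V$ to a countable set where $\2$ wins point-open; one checks both cases give a single winning strategy for $\2$ in $\G(X)$.

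The steps in order: (i) write down the space $X$ precisely and check it is $T_2$ and not $T_3$, witnessed by $\infty$ and the closed set $A$; (ii) observe $X$ is countable, so $\2\wins\sgame{1}{\mO,\mO}$ by Telg\'arsky--Galvin, hence $\2\wins\sgame{bnd}{\mO,\mO}$ by Theorem \ref{classical_nA_eqv_sbnd_cover} together with Corollary \ref{COR_sbnd_eqv_selsmod1} (or directly since $\ssel{1}{\mO,\mO}$ holds on countable spaces, which gives $\sbound(\mO,\mO)$ by Corollary \ref{COR_sbnd_eqv_selsmod1}); (iii) classify the compact subsets of $X$ and show that for each compact $K$ there is an open $V\supseteq K$ with $X\setminus V$ having a simple structure on which $\2$ wins the point-open game (e.g. $X\setminus V$ is countable and scattered of small rank, or literally finite in the critical case); (iv) assemble these choices into a winning strategy for $\2$ in $\G(X)$: respond to $\1$'s opening compact $K_0$ with such a $V$, then play the point-open winning strategy over $X\setminus V$ for the rest of the game, noting $\1$'s later points $x_n$ all lie in $X\setminus V$ (or are already covered) so $\2$ can indeed keep $X\setminus\bigcup_n V_n\neq\emptyset$.

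The main obstacle I expect is step (iii): pinning down exactly which open sets are forced around a compact set containing the bad point $\infty$, and ensuring the residual set $X\setminus V$ is genuinely one on which $\2$ wins the point-open game rather than one where $\1$ can still win — this is precisely where non-regularity must do its work, and it requires choosing the topology on $X$ delicately enough that "every neighborhood of a compact-with-$\infty$ set is cofinitely large" while still keeping $X$ Hausdorff. A secondary subtlety is making sure the chosen $V$ for non-trivial $K$ (those meeting many columns but not $\infty$) still leaves $X\setminus V$ point-open-favourable for $\2$; handling the two regimes uniformly in a single strategy, rather than case-by-case after $\1$'s first move, is the cleanest way and should go through once the topology is fixed.
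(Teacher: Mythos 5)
There is a fatal gap at the heart of steps (iii)--(iv): you have the point-open game oriented backwards. In that game $\1$ wins exactly when the chosen neighbourhoods cover $X$, so over any nonempty countable space $\1$ wins outright by enumerating the points and playing them one per inning -- whatever neighbourhoods $\2$ returns, they cover every enumerated point and hence all of $X$. Equivalently, by Galvin's duality (Theorem \ref{THM_po_roth_dual}) and Pawlikowski's theorem, $\2$ wins the point-open game over $Y$ iff $\1\wins\sgame{1}{\mO,\mO}$ over $Y$ iff $\ssel{1}{\mO,\mO}$ \emph{fails} over $Y$, and the Rothberger property never fails on a countable space. So for a countable $X$ every residual set $X\setminus V$ is countable and is therefore a set on which $\2$ \emph{loses} the point-open game; consequently $\1\wins\G(X)$ for every countable space (pick any compact set, then enumerate the points), and no countable example -- however delicately its non-regularity is arranged -- can satisfy $\2\wins\G(X)$. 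Your guiding heuristic, that a countable residual set with isolated-point structure is ``point-open-losing for $\1$,'' is exactly the opposite of the truth, and it is what makes the whole construction collapse. (Step (ii) is fine: countability does give $\2\wins\gbound(\mO,\mO)$; the conflict is entirely with $\G(X)$.)

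The tension you need to resolve is that $\2\wins\G(X)$ forces each residual $X\setminus V$ to be ``big'' (non-Rothberger, indeed with $\1\wins\sgame{1}{\mO,\mO}$ there), so the underlying set must be uncountable. The paper's route: start from $2^\w$, where $\2$ wins $\gbound(\mO,\mO)$ by compactness and also wins the point-open game (since $\1\wins\sgame{1}{\mO,\mO}$ over $2^\w$), and refine the topology by declaring every countable set closed. The refinement preserves Hausdorffness and $\2$'s point-open strategy, destroys regularity, and shrinks the compact sets down to the finite sets, so $\G(X)$ degenerates into the finite-open game, which $\2$ still wins. The only real work is checking that $\2$ retains a winning strategy in $\gbound(\mO,\mO)$ in the finer topology; this is done by running a $\game{1}{1}{\mO,\mO}$-strategy for the original topology on the $\tau$-parts $U'$ of the new basic open sets $U=U'\setminus C$ in the even innings, and sweeping up the countably many points thereby missed, one at a time, in reserved blocks of odd innings.
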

\begin{proof}
	Let $(X,\tau)$ be a Hausdorff space such that $\2\wins\sgame{bnd}{\mO,\mO}$ and $\2$ has a winning strategy in the point-open game (for instance, $2^{\w}$) and consider a new topology $\rho$ over $X$ that additionally makes every countable set closed. 
	
	Clearly, $\2$ still has a winning strategy in the point-open game (or, equivalently, the finite-open game) over the new topological space. Moreover, it is easy to see that, in the new topology, $K\subset X$ is compact if, and only if, $K$ is finite. So it follows that $\2\wins\G((X,\rho))$.
	
	However, $\2$ still has a winning strategy in $\sgame{bnd}{\mO,\mO}$ over the new topological space $(X,\rho)$. To see that, we first let $\set{A_k:k\in\w}$ be a partition of the odd numbers in $\w$ made by infinite subsets such that $\min{A_i}<\min{A_j}$ when $i<j$ and let $\sigma$ be a winning strategy for $\2$ in $\game{1}{1}{\mO,\mO}$ over the original topological space (that exists, because $(X,\rho)$ remains Hausdorff and by Theorem \ref{2_char_bnd_covergame}). In the new space, we may assume that $\1$ chooses only covers with open sets of the form $U\setminus C$, with $U\in\tau$ and $C$ countable. Given $\mU$ open cover of $(X,\rho)$ with said form we fix, for each $U\in\mU$, $U'$ as the open set from the original topology such that $U=U'\setminus C$ for some $C$ countable. Then we let, for each open cover $\mU$ of $(X,\rho)$ with said form, 
	\[
	\mU'=\set{U'\in\tau:U\in\mU}.
	\] 
	Now we define a strategy $\tilde{\sigma}$ as it follows: 
	\begin{itemize}
		\item in the first inning $(n=0)$, if $\1$ chooses $\mU_0$, let \[\tilde{\sigma}(\seq{\mU_0})=\set{U\in\mU_0: U'\in \sigma(\seq{\mU_0'})}.\]
		Note that $\bigcup\sigma(\seq{\mU_0'})\setminus \bigcup\tilde{\sigma}(\seq{\mU_0})$ is countable. Then we let $\tilde{\sigma}$ cover these points in the odd innings of the set $A_0$;
		\item if in the next even inning $(n=2)$, $\1$ chooses $\mU_2$, let \[\tilde{\sigma}(\seq{\mU_0, \mU_1,\mU_2})=\set{U\in\mU_2: U'\in \sigma(\seq{\mU_0', \mU_2'})}.\]
		Note that $\bigcup\sigma(\seq{\mU_0', \mU_2'})\setminus \bigcup\tilde{\sigma}(\seq{\mU_0, \mU_1, \mU_2})$ is countable. Then we let $\tilde{\sigma}$ cover these points in the odd innings of the set $A_1$;
		\item if in the next even inning $(n=4)$, $\1$ chooses $\mU_4$, let \[\tilde{\sigma}(\seq{\mU_0, \mU_1,\mU_2, \mU_3,\mU_4})=\set{U\in\mU_4: U'\in \sigma(\seq{\mU_0', \mU_2',\mU_4'})}.\]
		Note that $\bigcup\sigma(\seq{\mU_0', \mU_2',\mU_4'})\setminus \bigcup\tilde{\sigma}(\seq{\mU_0, \mU_1, \mU_2,\mU_3,\mU_4})$ is countable. Then we let $\tilde{\sigma}$ cover these points in the odd innings of the set $A_2$;
		\item and so on.
	\end{itemize}
	Clearly, $\tilde{\sigma}$ is a winning strategy in $\sgame{bnd}{\mO,\mO}$ over $(X,\rho)$, and the proof is complete. 
\end{proof}

\begin{cor}\label{reg_required_coversel}
	There is a Hausdorff non-regular space $X$ such that $\sbound(\mO,\mO)$ holds, but for every compact $K\subset X$ there is an open set $V\supset K$ such that $\ssel{1}{\mO,\mO}$ fails over $X\setminus V$.
\end{cor}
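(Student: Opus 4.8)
The plan is to read this off directly from Proposition~\ref{reg_required_covergame}, using only the classical dualities recalled in the excerpt. Let $X$ be the Hausdorff non-regular space produced by Proposition~\ref{reg_required_covergame}; its proof establishes that $\2\wins\gbound(\mO,\mO)$ over $X$ and that $\2\wins\G(X)$. These two facts will give, respectively, the positive half ($\sbound(\mO,\mO)$ holds) and the negative half (failure of $\ssel{1}{\mO,\mO}$ off a neighbourhood of every compact set) of the corollary.

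For the positive half I would simply observe that a winning strategy for $\2$ in $\gbound(\mO,\mO)$ precludes one for $\1$, i.e. $\1\doesntwin\gbound(\mO,\mO)$, and then invoke Theorem~\ref{classical_nA_eqv_sbnd_cover} (equivalently Proposition~\ref{classical_nA_imp_sel_bnd}) to conclude $\sbound(\mO,\mO)$. This step is immediate.

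For the negative half, fix a compact $K\subset X$ and a winning strategy $\tau$ for $\2$ in $\G(X)$. Consider the play in which $\1$ opens with $K$ and set $V=\tau(\seq{K})$, an open superset of $K$. I would then argue that the tail of $\tau$ is a winning strategy for $\2$ in the point-open game over the subspace $Y=X\setminus V$: given any sequence $y_1,y_2,\dots$ of points of $Y$ played by $\1$ in the point-open game, feeding $\seq{K}^\smallfrown\seq{y_1,y_2,\dots}$ to $\tau$ yields open sets $W_n\ni y_n$ with $V\cup\bigcup_{n}W_n\neq X$; responding in the point-open game over $Y$ with $W_n\cap Y$ then covers each $y_n$ yet leaves some point of $Y$ uncovered, so $\2$ wins. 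Hence $\2\wins$ the point-open game over $X\setminus V$; by Galvin's duality (Theorem~\ref{THM_po_roth_dual}) this means $\1\wins\sgame{1}{\mO,\mO}$ over $X\setminus V$, and by Pawlikowski's theorem (Theorem~\ref{pawlikowski}) this is precisely the statement that $\ssel{1}{\mO,\mO}$ fails over $X\setminus V$. Since $K$ was an arbitrary compact subset, this is the desired conclusion.

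The only point requiring a little care is this last transfer: turning $\2$'s winning strategy in $\G(X)$ into a genuine winning strategy in the point-open game over the subspace $X\setminus V$. The key observations are that intersecting $\2$'s responses with $Y$ still produces open neighbourhoods in $Y$ of the played points, and that $\bigcup_n(W_n\cap Y)\subseteq\bigcup_nW_n$ keeps a witness of non-covering inside $Y$; in particular no separation hypothesis is used here, which is why the corollary can afford to drop regularity. Everything else is a direct chaining of the cited results.
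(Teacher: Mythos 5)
Your proposal is correct and follows essentially the same route the paper intends: the positive half via $\2\wins\gbound(\mO,\mO)\Rightarrow\1\doesntwin\gbound(\mO,\mO)\Rightarrow\sbound(\mO,\mO)$, and the negative half by translating $\2\wins\G(X)$ through Galvin's duality (Theorem~\ref{THM_po_roth_dual}) and Pawlikowski's theorem (Theorem~\ref{pawlikowski}), exactly as in the proof of part~(d) of Theorem~\ref{gbnd_duality}. Your explicit verification that the tail of $\2$'s strategy in $\G(X)$ wins the point-open game on $X\setminus V$ is a correct filling-in of a step the paper leaves implicit.
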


\section{Conclusion}\label{SEC_CONC}
The results obtained in this paper can be summarized in the following diagrams (Figure \ref{DIAGRAM_tight} is dedicated to the tightness case and Figure \ref{DIAGRAM_cover} is dedicated to the covering case). Arrows represent implications. The number immediately next to an arrow tells us where is the proof of such implication (if it is not obvious) and the number between parenthesis immediately next to it points out to the counterexample of its converse implication. Indications such as ``Regular'' or ``$T_2$'' next to an arrow tell us that this assumption was required in the specified proof and the number between parenthesis next to this indication points out to the counterexample showing that without said assumption the implication would fail. For simplicity's sake, we will denote ``$\1$'' by ``$\A$'' and ``$\2$'' by ``$\B$''.

With all of that in mind, we quote here some results that show counterexamples to some of the implications in the diagram.  

\begin{prop}[\cite{Gruenhage2006}, Example 2.11; \cite{Aurichi2018}, Example 3.10]\label{TIGHT_AnotG1_BnotGFIN}
	There is a countable space with only one non-isolated point $p$ on which $\1\doesntwin \sgame{1}{\W_p,\W_p}$ and $\2\doesntwin\gfin(\W_p,\W_p)$. 
\end{prop}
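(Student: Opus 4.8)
The plan is to exhibit a \emph{single} countable space $X$ with a unique non-isolated point $p$ and verify both undeterminacy statements over it; concretely I would use (a reconstruction of) one of the spaces of \cite{Gruenhage2006} and \cite{Aurichi2018}. Since $X$ is countable with a single non-isolated point, its topology is coded by the filter $\mF$ of punctured neighbourhoods of $p$ on the countable discrete set $Y=X\setminus\{p\}$, and, for $A\subseteq X$, $A\in\W_p$ iff $p\in A$ or $A\cap Y$ meets every member of $\mF$ (call such $A$ \emph{$\mF$-positive}); moreover $\mF$ is free, so $\mF$-positive sets are infinite. A winning strategy for $\1$ never plays a $p$-containing set, so under this translation $\sgame{1}{\W_p,\W_p}$ becomes: $\1$ plays $\mF$-positive $A_n\subseteq Y$, $\2$ picks $b_n\in A_n$, and $\2$ wins iff $\set{b_n:n\in\w}$ is $\mF$-positive; $\gfin(\W_p,\W_p)$ is the variant in which $\2$ picks finite $B_n\subseteq A_n$ and wins iff $\bigcup_n B_n$ is $\mF$-positive. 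I would also record at the outset that, by monotonicity of the games in the size of $\2$'s moves together with Theorems \ref{Atightbnd_iff_Atightk} and \ref{Btightbnd_iff_Btightk}, the statement $\1\doesntwin\sgame{1}{\W_p,\W_p}$ is the strongest of the assertions ``$\1$ has no winning strategy in $\sgame{k}{\W_p,\W_p}$'' ($k\in\NN\cup\{\fin\}$) and implies $\1\doesntwin\gbound(\W_p,\W_p)$, while $\2\doesntwin\gfin(\W_p,\W_p)$ is the strongest of the corresponding assertions for $\2$; thus this one example witnesses the undeterminacy of every tightness game considered in the paper.

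For $\2\doesntwin\gfin(\W_p,\W_p)$ I would fix an arbitrary $\2$-strategy $\sigma$ and defeat it by recursion on its strategy tree: having committed to $\seq{A_0,\dots,A_{n-1}}$ — hence to $B_i=\sigma(\seq{A_0,\dots,A_i})$ for $i<n$ — $\1$ uses the combinatorial richness of $\mF$ to pick an $\mF$-positive $A_n\subseteq Y$ lying in a region so far from $B_0\cup\cdots\cup B_{n-1}$ that the finitely many points $\2$ can legally extract from $A_n$ are confined to a prescribed thin part of $Y$; run along a bookkeeping that enumerates $Y$, this forces $\bigcup_n B_n$ to avoid a fixed $F\in\mF$, so $\bigcup_n B_n\notin\W_p$ and $\2$ loses. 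For $\1\doesntwin\sgame{1}{\W_p,\W_p}$ I would, conversely, fix an arbitrary $\1$-strategy $\gamma$, consider the (at least) countably branching tree whose node $\seq{b_0,\dots,b_{n-1}}$ carries the $\mF$-positive set $\gamma(\seq{b_0,\dots,b_{n-1}})\subseteq Y$, and thread a single point $b_n$ through each node by a fusion argument so that $\set{b_n:n\in\w}$ stays $\mF$-positive. The property of $\mF$ used in the first argument is an anti-bounded-selection feature (countably many positive sets along which $\2$'s \emph{finite} selections can always be spread too thin, no matter how $\2$ reacts); the property used in the second is a homogeneous strong-fan-tightness-type selection feature robust enough to survive a countably branching strategy tree, so that $X$ has countable strong fan tightness in a ``game-resistant'' way while $\2$ still has no winning strategy.

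The crux, and the main obstacle, is to have a \emph{single} $\mF$ satisfying both demands at once: rich enough that $\2$'s finite selections can always be made to spread too thin (killing every $\2$-strategy in $\gfin$), yet structured enough that $\1$ can never drive $\2$'s single picks off $\mF$ (killing every $\1$-strategy in $\sgame{1}$). These requirements pull in opposite directions, and their reconciliation is precisely the point of the constructions in \cite{Gruenhage2006} and \cite{Aurichi2018}: there $\mF$ is built from a finely tuned combinatorial object (e.g.\ a scale, a ladder system, or an almost disjoint family) whose ``vertical'' branching defeats finite selections while its ``horizontal'' uniformity lets a single selection be threaded through any countable strategy tree. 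I would present the example by recalling that object explicitly, setting up $\mF$ as above, and then carrying out the two recursions; once the right object is in hand, the verifications are bookkeeping.
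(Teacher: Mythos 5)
Your general setup is sound: reducing the topology to the filter $\mF$ of punctured neighbourhoods of $p$, observing that $\1$ may as well play subsets of $Y=X\setminus\{p\}$, and noting that $\1\doesntwin\sgame{1}{\W_p,\W_p}$ and $\2\doesntwin\gfin(\W_p,\W_p)$ are the strongest of the respective families of undeterminacy assertions (so one example covers the whole diagram) are all correct and useful framing. The paper itself offers no proof of this proposition --- it is stated as a citation to \cite{Gruenhage2006}, Example 2.11 and \cite{Aurichi2018}, Example 3.10 --- so there is no ``paper proof'' to deviate from.

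The genuine gap is that you never produce the object whose existence is the entire content of the statement. You correctly identify the crux (``a single $\mF$ rich enough that $\2$'s finite selections can be spread too thin, yet structured enough that $\1$ cannot drive $\2$'s single picks off $\mF$'') and then explicitly defer its construction to the very references being cited. As written, both halves of your argument are conditional on unstated combinatorial properties of $\mF$: the $\gfin$ half invokes ``the combinatorial richness of $\mF$'' to confine $\2$'s finite selections to ``a prescribed thin part of $Y$'', and the $\sgame{1}$ half invokes a ``fusion argument'' through a countably branching tree --- but neither step can be checked, or even stated precisely, without knowing what $\mF$ is. Since the two demands pull in opposite directions (as you yourself note), it is not automatic that any filter satisfies both, and the existence claim is exactly what must be proved. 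A complete proof needs either an explicit construction of such an $X$ (for instance the countable dense subspace of $2^{\mathfrak c}$ with a chosen point, or the specific filter space of \cite{Aurichi2018}, Example 3.10) together with the two verifications, or an honest reduction to a precisely quoted statement from the literature --- which is what the paper does. What you have is a proof template, not a proof.
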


\begin{prop}[\cite{Scheepers1997}, pp. 250-251; \cite{Aurichi2018}, Example 2.4]\label{TIGHT_S1_AwinsGFIN}
	There exists a countable space $X$ with only one non-isolated point $p$ on which $\ssel{1}{\W_p,\W_p}$ holds (hence, $\ssel{fin}{\W_p,\W_p}$ holds) and $\1\wins\gfin(\W_p,\W_p)$.
\end{prop}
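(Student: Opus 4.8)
The plan is to build an explicit countable space $X=D\cup\{p\}$ in which $D$ is a set of isolated points and the neighborhoods of $p$ are $\{p\}\cup B$ for $B$ ranging over a filter $\mathcal{F}$ on $D$; then $p\in\overline{A}$ iff $A$ is $\mathcal{F}$-positive, and the two assertions to be checked become purely combinatorial: \textbf{(i)} for every sequence $\langle A_n:n\in\w\rangle$ of $\mathcal{F}$-positive sets there are $a_n\in A_n$ with $\{a_n:n\in\w\}$ still $\mathcal{F}$-positive (this is $\ssel{1}{\W_p,\W_p}$, which of course implies $\ssel{fin}{\W_p,\W_p}$); and \textbf{(ii)} $\1$ can adaptively produce $\mathcal{F}$-positive sets $A_n$ so that every sequence of finite choices $B_n\subseteq A_n$ by $\2$ has $\bigcup_n B_n$ non-$\mathcal{F}$-positive (this is $\1\wins\gfin(\W_p,\W_p)$). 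The filter will be the one used in \cite{Scheepers1997} and \cite{Aurichi2018}: on a countable tree-like index set, take the filter whose dual ideal is generated by a $\leq$-directed family of ``pruning'' sets that admits no countable cofinal subfamily — so $\mathcal{F}$ necessarily has uncountable character, which is unavoidable here, since if $p$ had a countable decreasing neighborhood base $\langle U_n\rangle$ then $\2$ would win $\gfin(\W_p,\W_p)$ by the trivial strategy ``in inning $n$ pick one point of $A_n\cap U_n$'' — yet with the generators arranged so that positivity is preserved under one-point-per-set selections.

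For \textbf{(i)}: given $\langle A_n\rangle$ with every $A_n\in\mathcal{F}^{+}$, I would pick $a_n\in A_n$ recursively, at stage $n$ dodging inside $A_n$ the finitely many earlier choices $a_0,\dots,a_{n-1}$ — which is possible because, by Fact \ref{triv_fact0}, deleting a finite set from a positive set keeps it positive. The construction of $\mathcal{F}$ is made precisely so that the ``spread-out'' set $\{a_n:n\in\w\}$ obtained this way escapes every pruning generator, hence is positive.

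For \textbf{(ii)}: I would write down $\1$'s strategy directly. After seeing $\2$'s finite responses $B_0,\dots,B_{n-1}$, $\1$ plays an $\mathcal{F}$-positive set $A_n$ that ``jumps past'' the pruning generators that already witness the non-positivity of the finite set $B_0\cup\dots\cup B_{n-1}$; this is available exactly because $\mathcal{F}$ is not countably generated, so $\1$ is never cornered. One then verifies that, along any play, the finiteness of each $B_n$ together with $\1$'s escaping moves forces $\bigcup_n B_n$ to lie inside a single pruning generator, so it is non-positive and $p\notin\overline{\bigcup_n B_n}$; hence $\1$ wins.

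The hard part is the construction itself: \textbf{(i)} wants positivity to be extremely robust (it must survive every diagonal one-point-per-set thinning), while \textbf{(ii)} wants positivity to be fragile enough that $\1$ can herd $\2$'s finitely-many-per-inning grabs into a non-positive union; reconciling these is the whole content of the example, and it is also why the naive attempts (cofinite filter, first-countable $\mathcal{F}$, the sequential fan, towers) all fail. The remaining delicate point is checking that $\1$'s strategy in \textbf{(ii)} defeats \emph{every} play of $\2$, not merely the obvious ones.
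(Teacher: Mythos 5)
You have set the problem up correctly --- realizing the space as $D\cup\{p\}$ with the neighborhoods of $p$ given by a free filter $\mathcal{F}$ on $D$, so that $\W_p$ becomes the family of $\mathcal{F}$-positive sets, and observing that $\mathcal{F}$ cannot be countably based (otherwise $\2$ wins even $\sgame{1}{\W_p,\W_p}$) --- but the proposition is an existence statement, and the object whose existence it asserts is never exhibited. The ``pruning generators'' are not defined, and the two verifications that carry all the weight --- that in (i) the diagonal one-point selection always lands in $\mathcal{F}^{+}$, and that in (ii) $\1$'s ``escaping'' moves corner \emph{every} play of $\2$ inside a single member of the dual ideal --- are each justified only by appeal to properties the undefined family is said to be ``arranged'' to have. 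You acknowledge this yourself: reconciling the robustness of positivity needed for (i) with the fragility needed for (ii) ``is the whole content of the example.'' Since those two requirements visibly pull in opposite directions, the assertion that some filter reconciles them \emph{is} the proposition to be proved; as written, the same template could be offered verbatim for a false statement of the same shape. This is a genuine gap, not a missing routine detail.

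For orientation, the sources cited in the statement do not build the example from an ad hoc filter but extract it from $C_p$-theory and $\omega$-covers: roughly, $\ssel{1}{\W_0,\W_0}$ on a function space $C_p(Y)$ is governed by a Rothberger-type property of $Y$ (compare Theorem \ref{Sakai1}), while $\1$'s winning strategies in $\gfin(\W_0,\W_0)$ are governed by a failure of $\sigma$-compactness-type properties of $Y$ (compare Theorem \ref{Barman1}), and a suitable witness $Y$ is then converted into a countable space with one non-isolated point. Whatever route you take, to complete the proof you must write down the space (equivalently, the filter $\mathcal{F}$) explicitly and then carry out both verifications: the recursive selection argument for $\ssel{1}{\W_p,\W_p}$, and a check that $\1$'s strategy in $\gfin(\W_p,\W_p)$ defeats arbitrary finite responses, not just the ``obvious'' ones. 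At present neither step exists.
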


We denote by $C_p(X)$ the subspace of $\RR^X$ of continuous functions. If $f\in C_p(X)$ is constant and equal to $0$, then we simply denote $f$ by $0$.

\begin{thm}[\cite{Barman2011}, Theorem 3.6]\label{Barman1}
If $X$ is $\sigma$-compact and metrizable, then $\2\wins\gfin(\W_0,\W_0)$ on $C_p(X)$.
\end{thm}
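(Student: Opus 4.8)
Proof plan for Theorem \ref{Barman1} ($X$ $\sigma$-compact metrizable $\implies$ $\2\wins\gfin(\W_0,\W_0)$ on $C_p(X)$).

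The plan is to exploit the classical duality between tightness properties of $C_p(X)$ and covering-type properties of $X$, now at the level of games. First I would recall the standard correspondence: a basic neighborhood of $0$ in $C_p(X)$ is determined by a finite set $F\subset X$ and $\varepsilon>0$, and a set $A\subset C_p(X)$ has $0$ in its closure exactly when for every such $(F,\varepsilon)$ some $f\in A$ is $\varepsilon$-small on $F$. Under this dictionary, the game $\gfin(\W_0,\W_0)$ on $C_p(X)$ translates into a game on $X$ in which $\1$ plays (roughly) open $\omega$-covers and $\2$ selects finitely many, with the winning condition being that the selected sets form an $\omega$-cover — i.e. (a variant of) $\gfin(\Omega,\Omega)$ on $X$, where $\Omega$ denotes the open $\omega$-covers. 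This kind of translation is exactly the Scheepers-type dictionary for function spaces, and since $\gfin(\W_0,\W_0)$ only asks $\2$ to pick finitely many functions per inning, the $\omega$-cover bookkeeping goes through cleanly.

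Next I would reduce to a purely covering-theoretic statement about $X$: it suffices to show that $\2$ has a winning strategy in (the relevant $\omega$-cover version of) $\gfin$ on $X$ whenever $X$ is $\sigma$-compact metrizable. Write $X=\bigcup_{n\in\w}K_n$ with each $K_n$ compact and, without loss of generality, $K_n\subset K_{n+1}$. The strategy for $\2$ is the obvious diagonalization against compactness: at inning $n$, when $\1$ presents an open $\omega$-cover $\mU_n$, $\2$ extracts a finite subfamily of $\mU_n$ that covers $K_n$ (using compactness of $K_n$, after refining to finite unions so that the cover is genuinely directed, which is what the $\omega$-cover hypothesis buys us). The union over all $n$ of these finite selections then covers $X$; one checks it is in fact an $\omega$-cover of $X$ because any finite $E\subset X$ lies in some $K_n$ and is therefore covered by a single member of the finite-union refinement of $\mU_m$ for all large $m$. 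Translating back, the corresponding finite sets of functions chosen by $\2$ witness $0\in\overline{\bigcup_n B_n}$ in $C_p(X)$, so $\2$ wins $\gfin(\W_0,\W_0)$.

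The main obstacle I expect is making the translation between the function-space game and the cover game on $X$ fully rigorous while keeping track of the metric/$\varepsilon$ data: a single inning in $C_p(X)$ carries a real parameter $\varepsilon$ that $\1$ implicitly controls, so one must argue that $\2$ can afford to ignore it (handling all $\varepsilon$ simultaneously via the $\omega$-cover structure, or via a standard trick that lets $\2$ amortize finer and finer $\varepsilon$'s across innings using the metric on $X$ — here is where metrizability, beyond mere regularity, is used). A secondary point to be careful about is the direction of the game equivalence: I only need one direction ($\2$ wins the cover game $\implies$ $\2$ wins the $C_p$ game), which is the easier half, so I would state the dictionary as an implication rather than a full equivalence and cite the relevant Scheepers-style results if available, or prove the needed implication directly by the diagonalization above.
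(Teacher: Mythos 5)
The paper does not prove this statement; it is quoted from Barman--Dow, so there is no internal proof to compare against. Judged on its own terms, your overall route (translate $\gfin(\W_0,\W_0)$ on $C_p(X)$ into the $\omega$-cover game $\gfin(\Omega,\Omega)$ on $X$, then beat the latter using $\sigma$-compactness) is the standard and correct one, but the covering-side strategy you describe has a genuine gap. At inning $n$ you have $\2$ extract a finite subfamily of the $\omega$-cover $\mU_n$ whose \emph{union} contains $K_n$. The union of all these selections then covers $X$, but that is not the winning condition: for $0$ to lie in the closure of $\bigcup_n B_n$ back in $C_p(X)$, the selected sets must form an $\omega$-cover, i.e.\ every finite $E\subset X$ must be contained in a \emph{single} selected member. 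A finite subfamily covering $K_n$ does not provide this, and it cannot in general: if every finite subset of a compact $K$ were contained in some member of a finite family $\mF$, then some single member of $\mF$ would contain all of $K$ (otherwise pick one omitted point per member), and an $\omega$-cover such as $\set{X\setminus\{x\}:x\in X}$ has no member containing a fixed infinite compact set. Passing to the ``finite-union refinement'' does not save the argument either, since in the game $\2$ must select actual members of $\mU_n$, and a finite $E$ contained in a union $U_1\cup\dots\cup U_j$ need not lie in any single $U_i$.

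The repair is to stratify by the cardinality of the finite sets and diagonalize over pairs: fix a bookkeeping function hitting every pair $(j,k)\in\w\times\NN$ infinitely often, and at an inning assigned to $(j,k)$ note that $\set{U^k:U\in\mU_n}$ is an open cover of the compact set $K_j^{\,k}$ (every $k$-element subset of $X$ lies in a single member of the $\omega$-cover $\mU_n$), so finitely many members $U\in\mU_n$ suffice to ensure that every subset of $K_j$ of size at most $k$ lies in one of them. With this selection rule, for every $m$ the selections made at innings $\ge m$ already form an $\omega$-cover of $X$, which is exactly what you need to absorb the $\varepsilon$-parameters when translating back to $C_p(X)$ (a basic neighborhood of $0$ determined by $(F,\varepsilon)$ is met by a function selected at some late inning where the tolerance is below $\varepsilon$). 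Your instinct that the $\varepsilon$-bookkeeping is the delicate point is right, but the cofinal $\omega$-cover property of the tails is what resolves it, and that property is precisely what your $K_n$-covering strategy fails to deliver.
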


\begin{thm}[\cite{Sakai1988}, Theorem 1]\label{Sakai1}
For every space X, $\ssel{1}{\W_0,\W_0}$ holds over $C_p(X)$ if, and only if, $\ssel{1}{\mO,\mO}$ holds over each finite product of $X$.
\end{thm}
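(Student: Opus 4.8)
The plan is to relate the closure structure at $0$ in $C_p(X)$ to $\w$-covers of $X$; throughout I assume, as is standard for $C_p$-theory, that $X$ is Tychonoff. Recall that for $A\subseteq C_p(X)$ one has $0\in\overline A$ if and only if for every finite $F\subseteq X$ and every $\varepsilon>0$ there is $f\in A$ with $\sup_{x\in F}|f(x)|<\varepsilon$; recall also that an open cover $\mathcal U$ of $X$ is an \emph{$\w$-cover} if $X\notin\mathcal U$ and every finite subset of $X$ lies in a member of $\mathcal U$, and write $\ssel{1}{\W,\W}$ for the Rothberger-type selection principle for $\w$-covers of $X$ (so here $\W$ denotes the family of $\w$-covers of $X$). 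I would prove the ``if'' part directly, and the ``only if'' part by factoring through $\ssel{1}{\W,\W}$: namely, countable strong fan tightness of $C_p(X)$ at $0$ implies $\ssel{1}{\W,\W}$, and $\ssel{1}{\W,\W}$ implies the Rothberger property of every finite power $X^n$.

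For the ``if'' part, assume $\ssel{1}{\mO,\mO}$ holds over $X^n$ for all $n\in\NN$ and let $\seq{A_p:p\in\w}$ be subsets of $C_p(X)$ with $0\in\overline{A_p}$. Partition $\w=\bigcup_{(k,m)\in\NN\times\NN}P_{k,m}$ into infinitely many infinite pieces and enumerate $P_{k,m}=\set{n(k,m,i):i\in\w}$. Fixing $(k,m)$ and writing $[|f|<1/m]=\set{x\in X:|f(x)|<1/m}$, for each $i$ the family $\set{\,[|f|<1/m]^k:f\in A_{n(k,m,i)}\,}$ is an open cover of $X^k$, since for $(x_1,\dots,x_k)\in X^k$ the condition $0\in\overline{A_{n(k,m,i)}}$ applied to $\set{x_1,\dots,x_k}$ and $\varepsilon=1/m$ supplies a suitable $f$. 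Apply $\ssel{1}{\mO,\mO}$ over $X^k$ to the sequence (in $i$) of these covers to obtain $f^{k,m}_i\in A_{n(k,m,i)}$ with $\bigcup_i[|f^{k,m}_i|<1/m]^k=X^k$, and set $g_n:=f^{k,m}_i$ for $n=n(k,m,i)$; this chooses $g_n\in A_n$ for every $n$. Then $0\in\overline{\set{g_n:n\in\w}}$: given finite $F\subseteq X$ and $\varepsilon>0$, take $m$ with $1/m<\varepsilon$, set $k=|F|$, list $F=\set{x_1,\dots,x_k}$, find $i$ with $(x_1,\dots,x_k)\in[|f^{k,m}_i|<1/m]^k$, and conclude $\sup_{x\in F}|g_{n(k,m,i)}(x)|<1/m<\varepsilon$.

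For the ``only if'' part, first I would show $\ssel{1}{\W,\W}$ holds over $X$. Given $\w$-covers $\seq{\mathcal U_p:p\in\w}$ of $X$, refine each (using Tychonoff-ness) to an $\w$-cover by cozero sets, $\mathcal U_p=\set{\mathrm{coz}(\varphi):\varphi\in\Phi_p}$ with $\Phi_p$ a family of continuous functions $X\to[0,1]$; a selection forming an $\w$-cover of these refinements lifts back to one from the $\mathcal U_p$. Put $f_{\varphi,l}:=\max(0,1-l\varphi)\in C_p(X)$ and $A_p:=\set{f_{\varphi,l}:\varphi\in\Phi_p,\ l\in\NN}$. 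Then $0\in\overline{A_p}$: given finite $F$ and $\varepsilon>0$, choose $\varphi\in\Phi_p$ with $F\subseteq\mathrm{coz}(\varphi)$ and then $l$ with $l\cdot\min_{x\in F}\varphi(x)>1$, so that $f_{\varphi,l}$ vanishes on $F$. By countable strong fan tightness of $C_p(X)$ at $0$ there are $h_p=f_{\varphi_p,l_p}\in A_p$ with $0\in\overline{\set{h_p:p\in\w}}$; using the closure condition with $\varepsilon=1$ together with the identity $\set{x\in X:f_{\varphi_p,l_p}(x)<1}=\mathrm{coz}(\varphi_p)$, it follows that $\set{\mathrm{coz}(\varphi_p):p\in\w}$ is an $\w$-cover of $X$ selecting one member from each $\mathcal U_p$. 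Finally, that $\ssel{1}{\W,\W}$ implies the Rothberger property of $X^n$ for every $n$ is the classical ``finite powers'' theorem, which I would invoke rather than reprove.

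The hard part is precisely that last implication: countable strong fan tightness of $C_p(X)$ is transparently a statement about $\w$-covers of $X$ alone, yet it must control \emph{arbitrary} open covers of each power $X^n$, and such a cover cannot simply be refined into an $\w$-cover of $X$ since the coordinates of a point of $X^n$ need not be ``aligned''. The usual route is to first show $\ssel{1}{\W,\W}$ passes to finite powers — an $\w$-cover of $X^n$, refined to one by boxes, is built coordinatewise from $\w$-covers of $X$, and care is needed to make the chosen boxes originate from a common cover element — and then to extract the Rothberger property of $X^n$ from the $\w$-cover principle for $X^n$; I would reproduce this bookkeeping from the literature. (The converse of the powers theorem, that all powers being Rothberger yields $\ssel{1}{\W,\W}$, follows from an easy block-and-diagonalise argument like the one in the ``if'' part above, but is not needed here.)
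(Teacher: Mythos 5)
The paper does not prove this statement at all --- it is quoted from Sakai's 1988 paper as a known result --- so there is no internal proof to compare against; judged on its own, your argument is correct and is essentially the standard (Sakai-style) proof. The ``if'' direction is complete and self-contained: partitioning $\w$ into infinite pieces indexed by pairs $(k,m)$, observing that $\set{[|f|<1/m]^k : f\in A_{n(k,m,i)}}$ is an open cover of $X^k$ precisely because $0\in\overline{A_{n(k,m,i)}}$, and the final verification that the selected $g_n$ witness $0\in\overline{\set{g_n:n\in\w}}$ are all fine (and this half needs no separation axioms). The ``only if'' direction is also correctly set up: the translation of countable strong fan tightness at $0$ into the selection principle $\ssel{1}{\W,\W}$ for $\w$-covers of $X$, via cozero refinements and the functions $\max(0,1-l\varphi)$, works as written, and you are right that the blanket ``every space $X$'' must be read with the standing Tychonoff hypothesis of $C_p$-theory --- complete regularity is exactly what makes the cozero refinement and the witnessing functions available. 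The one place where your write-up is not self-contained is the last step, the implication from $\ssel{1}{\W,\W}$ of $X$ to the Rothberger property of every finite power $X^n$; you correctly identify this as the combinatorial heart of the harder direction and defer it to the classical finite-powers theorem (itself due to Sakai, later systematized by Just--Miller--Scheepers--Szeptycki). Since the statement under review is an imported classical theorem, invoking that result is legitimate and involves no circularity, but be aware that your proof of the ``only if'' direction is a reduction to it rather than a proof from scratch, so the proposal stands or falls with that citation.
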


\begin{cor}\label{bnd_tightgame_ex}
Over $C_p(\RR)$: 
\begin{itemize}
	\item[(a)] $\2\wins\gfin(\W_0,\W_0)$;
	\item[(b)] $\ssel{1}{\W_0,\W_0}$ fails.
\end{itemize}
\end{cor}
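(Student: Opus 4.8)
The plan is to deduce Corollary \ref{bnd_tightgame_ex} as an immediate application of the two cited theorems, Theorems \ref{Barman1} and \ref{Sakai1}, to the specific space $X=\RR$. First I would verify the hypotheses for part (a): $\RR$ is certainly metrizable, and it is $\sigma$-compact since $\RR=\bigcup_{n\in\w}[-n,n]$ is a countable union of compact sets. Hence Theorem \ref{Barman1} applies verbatim with $X=\RR$ and yields $\2\wins\gfin(\W_0,\W_0)$ on $C_p(\RR)$, which is exactly assertion (a).

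For part (b), the key observation is that $\ssel{1}{\mO,\mO}$ fails over $\RR$ — indeed, Proposition \ref{covering_ex} records that $\ssel{1}{\mO,\mO}$ does not even hold over $2^\w$, but more directly $\RR$ is not Rothberger because it is not even strong measure zero (or because it contains a copy of $2^\w$, or simply because uncountable analytic/Polish spaces are never Rothberger). Since $\RR$ is the first factor of every finite product $\RR^k$, if $\ssel{1}{\mO,\mO}$ held over each finite product $\RR^k$ then in particular it would hold over $\RR^1=\RR$, a contradiction. Therefore $\ssel{1}{\mO,\mO}$ fails over some finite product of $\RR$, and by Theorem \ref{Sakai1} (applied with $X=\RR$) this is equivalent to the failure of $\ssel{1}{\W_0,\W_0}$ over $C_p(\RR)$, giving assertion (b).

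The write-up is therefore very short: cite Theorem \ref{Barman1} for (a) after noting $\RR$ is $\sigma$-compact metrizable, and cite Theorem \ref{Sakai1} for (b) after noting $\ssel{1}{\mO,\mO}$ fails over $\RR$ itself (hence over some finite power). There is essentially no obstacle here; the only thing to be slightly careful about is to justify that $\RR$ is not Rothberger, for which one can simply invoke that $2^\w$ embeds in $\RR$ as a closed (hence Rothberger-inheriting) subspace together with Proposition \ref{covering_ex}, or cite the classical fact that no uncountable Polish space is Rothberger. The role of this corollary in the paper is to exhibit, in the tightness setting, a space $C_p(\RR)$ where $\2$ wins the finite game but the one-element selection principle fails — a counterexample used in the summarizing diagrams of Section \ref{SEC_CONC}.
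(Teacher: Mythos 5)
Your proposal is correct and is exactly the intended argument: the paper states this as an immediate corollary of Theorems \ref{Barman1} and \ref{Sakai1}, with (a) following since $\RR$ is $\sigma$-compact and metrizable, and (b) following since $\ssel{1}{\mO,\mO}$ fails over $\RR$ itself (e.g.\ because it fails over the closed copy of $2^\w$ in $\RR$, or because already $\sbound(\mO,\mO)$ fails over $\RR$ by Proposition \ref{covering_ex}). Your extra care in justifying that $\RR$ is not Rothberger is harmless and consistent with the paper.
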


\begin{prop}[\cite{Telgarsky1983}, Section 7; \cite{Aurichi2013}, Example 3.5]\label{COVER_AnotG1_BnotGFIN}
	There is a space on which $\ssel{1}{\mO,\mO}$ holds (hence, $\1\doesntwin\sgame{1}{\mO,\mO}$), but $\2\doesntwin\gfin(\mO,\mO)$.
\end{prop}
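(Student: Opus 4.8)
The plan is to produce a single space $X$ witnessing both nontrivial assertions at once. The parenthetical ``$\1\doesntwin\sgame{1}{\mO,\mO}$'' is then immediate from Pawlikowski's Theorem (Theorem~\ref{pawlikowski}), so it suffices to exhibit a space on which $\ssel{1}{\mO,\mO}$ holds but $\2$ has no winning strategy in the Menger game $\gfin(\mO,\mO)$. Note that any such $X$ is automatically Lindel\"of and Menger ($\ssel{fin}{\mO,\mO}$), whence by Hurewicz's Theorem (Theorem~\ref{hurewicz}) $\1\doesntwin\gfin(\mO,\mO)$ as well; so we are really asking for a Rothberger space whose Menger game is \emph{undetermined}. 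This already rules out the cheapest candidates: over the one-point Lindel\"ofication of an uncountable discrete space $\2$ wins even the Rothberger game, by first choosing a co-countable neighbourhood of the non-isolated point and then mopping up the countably many leftover isolated points. So the ``bad part'' of $X$ must be spread out: its failure of $\sigma$-compactness must not be ``neutralisable'' by $\2$ in finitely many innings.

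The space I would use is a regular, Lindel\"of, Rothberger space that is not $\sigma$-compact and not of that cheap form --- for instance, Telg\'arsky's space (\cite{Telgarsky1983}, Section~7; equivalently, Aurichi's Example~3.5 in \cite{Aurichi2013}). It can be taken to be built over an $\w$-indexed tree (alternatively, over $\w_1$ with a suitable refinement of a well-order topology) whose levels are locally compact while the global structure is arranged so that a finite set of points can meet only a bounded portion of each level. With such an $X$ fixed, there are two things to check: (i) $\ssel{1}{\mO,\mO}$ holds on $X$; and (ii) $\2\doesntwin\gfin(\mO,\mO)$ on $X$.

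For (i): given a sequence $\seq{\mU_n:n\in\w}$ of open covers, peel the structure off inning by inning, using at the $n$-th step the local ($\sigma$-compact-type) behaviour of the relevant level to pick a single $U_n\in\mU_n$ covering the next ``critical'' point dictated by a bookkeeping of the tree, and then verify that $\{U_n:n\in\w\}$ covers $X$; this is a routine recursion and is the easy half. For (ii) --- the crux --- fix an arbitrary strategy $\sigma$ for $\2$ in $\gfin(\mO,\mO)$ and build, by a fusion over $\w$ innings, a play $\seq{\mU_n:n\in\w}$ for $\1$ defeating it: at inning $n$, $\1$ plays a cover $\mU_n$ depending on $\sigma(\seq{\mU_0,\dots,\mU_{n-1}})$ and on the tree, chosen so that $\2$'s finite response $\sigma(\seq{\mU_0,\dots,\mU_n})$ can absorb only finitely many of a steadily growing family of ``targets'' that $\1$ keeps alive along the coordinates of the tree. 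A diagonal/bookkeeping argument --- in the spirit of the ``$\modd\fin$'' arguments of Section~\ref{SEC_mod1_modfin}, but now with \emph{no} global bound available on the sizes of $\2$'s selections, which is exactly why the Menger game fails here although the bounded game $\gbound(\mO,\mO)$ did not --- then yields $\bigcup_{n\in\w}\bigcup\sigma(\seq{\mU_0,\dots,\mU_n})\neq X$.

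The main obstacle is precisely the tension between (i) and (ii): Rothbergerness pushes $X$ towards being ``concentrated'', a small space that single selections can mop up, whereas $\2\doesntwin\gfin(\mO,\mO)$ demands that $\2$ can never reduce the uncovered remainder to something countable and manageable in finitely many innings. Designing the tree/ordinal gadget so that both hold at once, and then making the fusion in (ii) genuinely keep some target alive against \emph{every} strategy of $\2$ --- not merely against the ``one set per inning'' strategies that the Rothberger game would allow --- is where essentially all the work lies; once the space is pinned down, both verifications are bookkeeping.
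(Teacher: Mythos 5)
There is a genuine gap: you never actually produce the space. The paper offers no proof of this proposition either --- it is stated with citations to Telg\'arsky and to Aurichi--Dias, so the entire content of the statement \emph{is} the example, and your write-up defers exactly that. The passage ``built over an $\w$-indexed tree (alternatively, over $\w_1$ with a suitable refinement of a well-order topology) whose levels are locally compact while the global structure is arranged so that a finite set of points can meet only a bounded portion of each level'' does not define a topological space, and both verifications (i) and (ii) are declared routine without an object to verify them on. What you do get right is the logical frame: Pawlikowski disposes of the parenthetical, Hurewicz shows any witness has an undetermined Menger game, and the one-point Lindel\"ofication is correctly ruled out because $\2$ wins even $\sgame{1}{\mO,\mO}$ there. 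But that is a plan, not a proof.

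Two further points you should confront if you want to complete this. First, for metrizable (indeed regular Lindel\"of) spaces one does not need the bespoke fusion argument you sketch for (ii): by the Telg\'arsky--Scheepers characterization, $\2\wins\gfin(\mO,\mO)$ is equivalent to $\sigma$-compactness, so (ii) reduces to ``not $\sigma$-compact,'' and the standard witness is a concentrated (e.g.\ Lusin) set of reals, which is Rothberger but has all compact subsets countable. Second, this route is not free in ZFC: under the Borel conjecture every Rothberger set of reals is countable, hence $\sigma$-compact, so a metrizable example requires extra axioms; the cited references supply the example precisely to sidestep this, and your sketch gives no indication of whether your gadget is a ZFC construction or a CH-style one. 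As written, the crux of the proposition --- exhibiting a concrete $X$ and proving (ii) against \emph{every} strategy of $\2$ --- is entirely missing.
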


\begin{landscape}
	\centering
\begin{tikzpicture}
\matrix (m) [matrix of nodes, row sep=20mm, column sep=5mm]
{
	$\B\wins\game{1}{1}{\W_p,\W_p}$ & $\B\wins\game{k}{1}{\W_p,\W_p}$& $\exists k\in\NN\B\wins\sgame{k}{\W_p,\W_p}$& &\\
	$\B\uparrow\sgame{1}{\W_p,\W_p}$ & $\B\uparrow\sgame{k}{\W_p,\W_p}$ & $\B\uparrow\gbound(\Omega_p, \Omega_p)$ & $\B\uparrow\gfin(\Omega_p, \Omega_p)$ & $\A\doesntwin\gfin(\W_p,\W_p)$\\
	 &  &  &  & $\ssel{fin}{\W_p,\W_p}$\\
	$\A\doesntwin\sgame{1}{\W_p,\W_p}$ & $\A\doesntwin\sgame{k}{\W_p,\W_p}$ & $\A\doesntwin\gbound(\W_p,\W_p)$ &  & $\sbound(\W_p,\W_p)$\\ 
	$\A\doesntwin\game{1}{1}{\W_p,\W_p}$ & $\A\doesntwin\game{k}{1}{\W_p,\W_p}$& $\exists k\in\NN\A\doesntwin\sgame{k}{\W_p,\W_p}$& & $\ssel{1}{\W_p,\W_p}$ \\ 
};

\draw[->] (m-1-1) to node[auto]{(\ref{ex_aurichi2018})} (m-1-2);
\draw[->] (m-1-2) to node[auto]{(\ref{bnd_tightgame_ex2})} (m-1-3);

\draw[->] (m-2-1) to node[auto]{(\ref{ex_aurichi2018})} (m-2-2);
\draw[->] (m-2-2) to node[auto]{(\ref{bnd_tightgame_ex2})} (m-2-3);
\draw[->] (m-2-3) to node[auto]{(\ref{TIGHT_gf_not_eqv_gbnd})} (m-2-4);
\draw[->] (m-2-4) to node[auto]{(\ref{TIGHT_AnotG1_BnotGFIN})} (m-2-5);

\draw[->] (m-4-1) to node[auto]{(\ref{ex_aurichi2018})} (m-4-2);
\draw[->] (m-4-2) to node[auto]{(\ref{bnd_tightgame_ex2})} (m-4-3);
\draw[->] (m-4-3) to node[auto]{\ref{classical_nA_imp_sel_bnd} (\ref{TIGHT_S1_AwinsGFIN})} (m-4-5);

\draw[->] (m-5-1) to node[auto]{(\ref{ex_aurichi2018})} (m-5-2);
\draw[->] (m-5-2) to node[auto]{(\ref{bnd_tightgame_ex2})} (m-5-3);

\draw[<->] (m-1-1) to node[auto]{\ref{tightgame_eqv0}} (m-2-1);
\draw[<->] (m-1-2) to node[auto]{\ref{tightgame_eqv0}} (m-2-2);
\draw[<->] (m-1-3) to node[auto]{\ref{Btightbnd_iff_Btightk}} (m-2-3);

\draw[->] (m-2-5) to node[auto]{(\ref{TIGHT_S1_AwinsGFIN})} (m-3-5);

\draw[->] (m-4-5) to node[auto, swap]{(\ref{bnd_tightgame_ex})} (m-3-5);

\draw[->] (m-2-1) to node[auto]{(\ref{TIGHT_AnotG1_BnotGFIN})} (m-4-1);
\draw[->] (m-2-2) to node[auto]{(\ref{TIGHT_AnotG1_BnotGFIN})} (m-4-2);
\draw[->] (m-2-3) to node[auto]{(\ref{TIGHT_AnotG1_BnotGFIN})} (m-4-3);

\draw[<->] (m-4-1) to node[auto]{\ref{tightgame_eqv0}} (m-5-1);
\draw[<->] (m-4-2) to node[auto]{\ref{tightgame_eqv0}} (m-5-2);
\draw[<->] (m-4-3) to node[auto]{\ref{Atightbnd_iff_Atightk}} (m-5-3);
\draw[<->] (m-4-5) to node[auto]{\ref{tight_eqv0}} (m-5-5);

\draw[->] (m-4-3) to node[auto,swap]{(\ref{TIGHT_gf_not_eqv_gbnd})} (m-2-5);
\end{tikzpicture}

\captionof{figure}{Tightness case}
\label{DIAGRAM_tight}
\end{landscape}

\begin{landscape}
	\centering
	\begin{tikzpicture}
	\matrix (m) [matrix of nodes, row sep=20mm, column sep=8mm]
	{
		 & & $\B\wins\game{1}{1}{\mO,\mO}$& $\A\wins\G(X)$& &\\
		$\B\uparrow\sgame{1}{\mO,\mO}$ & $\B\uparrow\sgame{k}{\mO,\mO}$ & $\B\uparrow\gbound(\mO, \mO)$ & $\B\uparrow\gfin(\mO, \mO)$ & $\A\doesntwin\gfin(\mO,\mO)$ &\\
		&  &  &  & $\ssel{fin}{\mO,\mO}$ &\\
		$\A\doesntwin\sgame{1}{\mO,\mO}$ & $\A\doesntwin\sgame{k}{\mO,\mO}$ & $\A\doesntwin\gbound(\mO,\mO)$ &  & $\sbound(\mO,\mO)$ & $\sel{1}{1}{\mO,\mO}$\\ 
		 & & $\A\doesntwin\game{1}{1}{\mO,\mO}$ & $\B\doesntwin\G(X)$ & $\ssel{1}{\mO,\mO}$  & $\sels{1}{1}{\mO,\mO}$\\ 
	};

	\draw[<->] (m-2-1) to node[auto]{\cite{Crone2019}} node[auto,swap]{$T_2$} (m-2-2);
	\draw[->] (m-2-2) to node[auto]{(\ref{bnd_cover_games})} (m-2-3);
	\draw[->] (m-2-3) to node[auto]{(\ref{bnd_cover_games})} (m-2-4);
	\draw[->] (m-2-4) to node[auto]{(\ref{COVER_AnotG1_BnotGFIN})} (m-2-5);
	
	\draw[<->] (m-4-1) to node[auto]{\cite{Crone2019}} (m-4-2);
	\draw[->] (m-4-2) to node[auto]{(\ref{bnd_cover_games})} (m-4-3);
	\draw[<->] (m-4-3) to node[auto]{\ref{classical_nA_eqv_sbnd_cover}} (m-4-5);
	\draw[<->] (m-4-5) to node[auto]{\ref{bnd_coversel_char}} (m-4-6);
	
	\draw[<->] (m-5-4) to node[auto,swap]{\ref{gbnd_duality}} node[auto] {Regular (\ref{reg_required_covergame})} (m-5-3);
	
	\draw[<->] (m-1-3) to node[auto]{\ref{gbnd_duality}} node[auto,swap] {Regular (\ref{reg_required_covergame})} (m-1-4) ;
	
	\draw[->] (m-1-3) to node[auto]{\ref{2_char_bnd_covergame}}node[auto,swap] {$T_2$} (m-2-3);
	
	\draw[->] (m-2-3) to node[auto]{} (m-1-3);
	
	\draw[<->] (m-2-5) to node[auto]{\ref{hurewicz}} (m-3-5);
	
	\draw[->] (m-2-5) to node[auto]{} (m-3-5);
	
	\draw[->] (m-2-1) to node[auto]{(\ref{COVER_AnotG1_BnotGFIN})} (m-4-1);
	\draw[->] (m-2-2) to node[auto]{(\ref{COVER_AnotG1_BnotGFIN})} (m-4-2);
	\draw[->] (m-2-3) to node[auto]{(\ref{COVER_AnotG1_BnotGFIN})} (m-4-3);
	\draw[->] (m-4-5) to node[auto,swap]{(\ref{covering_ex})} (m-3-5);
	
	\draw[<->] (m-4-3) to node[auto]{\ref{1_char_bnd_covergame}} (m-5-3);
	\draw[<-] (m-4-5) to node[auto]{(\ref{covering_ex})} (m-5-5);
	
	\draw[<->] (m-4-6) to node[auto]{\ref{COR_sbnd_eqv_selsmod1}} (m-5-6);
	
	\draw[->] (m-4-3) to node[auto,swap]{(\ref{bnd_cover_games})} (m-2-5);
	\end{tikzpicture}
	
	\captionof{figure}{Covering case}
	\label{DIAGRAM_cover}
\end{landscape}

\newpage


In the proof of Theorem \ref{2_char_bnd_covergame} we used the main result of \cite{Crone2019}, which is why we required $X$ to be Hausdorff. So, just like it was done in \cite{Crone2019}, it is only natural to end here with the question:

\begin{prob}\label{PROB_hausdorff}
	Is there a non-Hausdorff space $X$ such that $\2\wins\gbound(\mO,\mO)$, but $\2\doesntwin\game{1}{1}{\mO,\mO}$?
\end{prob}

In fact, it is easy to see that Problem \ref{PROB_hausdorff} is actually equivalent to the problem presented in \cite{Crone2019}:

\begin{prob}\label{PROB_Crone2019}
	Is there a non-Hausdorff space $X$ such that $\2\wins\sgame{k}{\mO,\mO}$ for some $k\in\NN$, but $\2\doesntwin\sgame{1}{\mO,\mO}$?
\end{prob}

\section{Acknowledgements}
We thank Piotr Szewczak and Boaz Tsaban for giving us access to the preliminary notes of \cite{Szewczak} and we also thank Henrique A. Lecco, who made the question that motivated the beginning of this paper.  


\end{document}